\numberwithin{equation}{section} \DeclareMathSizes{2}{10}{12}{13}
\newtheorem{thm}{Proposition}[section]
\newtheorem{Thm}[thm]{Theorem}
\newtheorem{cor}[thm]{Corollary}
\newtheorem{lem}[thm]{Lemma}
\newtheorem{defn}[thm]{Definition}
\newtheorem{ithm}{Theorem} 
\title{$FI$-modules over preadditive categories and torsion theories}
\author{Abhishek Banerjee\footnote{Partially supported by SERB Matrics fellowship MTR/2017/000112} }
\date{}
\begin{document}

\maketitle

\medskip
\begin{center}
\emph{Dept. of Mathematics, Indian Institute of Science, Bangalore - 560012, India.}

\emph{Email : abhishekbanerjee1313@gmail.com}
\end{center}

\medskip

\begin{abstract}
We work with $FI$-modules over a small preadditive category $\mathcal R$, viewed as a ring with several objects. Our aim is to study torsion theories for $FI$-modules. We are especially interested in torsion theories on finitely generated $FI$-modules and the category of 
what we call ``shift finitely generated'' $FI$-modules. We also apply these methods to study inductive descriptions of $FI$-modules over $\mathcal R$. 
\end{abstract}

\medskip
MSC(2010) Subject Classification:  18E40

\medskip
Keywords : $FI$-modules, torsion theories

\section{Introduction}

\smallskip
Let $FI$ be the category of finite sets and injective maps. If $R$ is a ring, an $FI$-module over $R$  is a functor from $FI$ to the category of $R$-modules. The notion of $FI$-modules was introduced by Church, Ellenberg and Farb in \cite{CEFx}, with a view towards a deeper understanding of the Church-Farb theory \cite{CF} of representation stability for $S_n$-representations. This was further developed by Church, Ellenberg and Farb \cite{CEF-1}, \cite{CEF-2}, by Church, Ellenberg, Farb and Nagpal \cite{Four1}, by Church and Ellenberg \cite{CEx}, by Putman \cite{Put}, by Putman and Sam \cite{PS}, and by Sam and Snowden \cite{SS0} \cite{SS}.  Since then, a wide variety of results on $FI$-modules has been developed by numerous authors, with applications to algebraic topology, algebraic geometry and representation theory (see, for instance, \cite{CMNR}, \cite{LE}, \cite{LGO}, \cite{RSS}, \cite{Ramos}). 

\smallskip
In this paper, our aim is to study torsion theories for $FI$-modules. We are especially interested in torsion theories on finitely generated $FI$-modules and the category of 
what we call ``shift finitely generated'' $FI$-modules (see Definition \ref{D3.9k}). 
 We work with $FI$-modules over a small preadditive category $\mathcal R$, viewed as a ring with several objects
in the sense of Mitchell \cite{Mi}. As such, the category $FI_{\mathcal R}$ of  $FI$-modules over $\mathcal R$ consists of functors from $FI$ to the category $Mod-\mathcal R$ of right modules over $\mathcal R$. We recall that a right module over $\mathcal R$ is a functor from $\mathcal R^{op}$ to the category of abelian groups. We mostly work with the case where $\mathcal R$ is such that the category $Mod-\mathcal R$ is locally noetherian. When $\mathcal R$
is abelian, we have studied in \cite{ABCCM} how torsion theories on $\mathcal R$ may be extended to certain classes of modules over $\mathcal R$. 

\smallskip
We begin with a hereditary torsion theory $(\mathcal T,\mathcal F)$ on $Mod-\mathcal R$. We show that $\tau$ induces a hereditary torsion class $\overline{\mathcal T}$ on the subcategory
$FI^{fg}_{\mathcal R}$ of finitely generated $FI$-modules as well as a hereditary torsion class $\overline{\mathcal T}^{sfg}$ on the category
$FI^{sfg}_{\mathcal R}$ of shift finitely generated $FI$-modules. We then extend $\overline{\mathcal T}$ and $\overline{\mathcal T}^{sfg}$ to Serre subcategories $\hat{\mathcal T}$
and $\widetilde{\mathcal T}$ respectively of $FI_{\mathcal R}$.
 In other words, we have $\hat{\mathcal T}\cap FI^{fg}_{\mathcal R}=\overline{\mathcal T}$ and $\widetilde{\mathcal T}\cap FI^{sfg}_{\mathcal R}=\overline{\mathcal T}^{sfg}$. We then describe functors from $FI_{\mathcal R}$ to $\hat{\mathcal T}$-closed 
and $\widetilde{\mathcal T}$-closed objects of $FI_{\mathcal R}$. In fact, we show that an object in $FI_{\mathcal R}$ is closed with respect to the Serre subcategory $\hat{\mathcal T}$ if and only if 
it is closed with respect to  $\widetilde{\mathcal T}$. Finally, we apply these methods to study inductive descriptions of $FI$-modules over $\mathcal R$. 

\smallskip
We begin in Section 2 with preliminary results on $FI$-modules over $\mathcal R$, extending those from \cite[$\S$ 2.1]{Four1}. In particular, we show that $FI_{\mathcal R}$ is a Grothendieck category with a set of finitely generated projective generators. We also recall that when $\mathcal R$ is such that $Mod-\mathcal R$ is locally noetherian, the category 
$FI_{\mathcal R}$ of $FI$-modules over $\mathcal R$ becomes a locally noetherian category (see  (see \cite[Theorem A]{Four1}, \cite{Djament},  \cite{PS}, \cite{SS} and  \cite[Theorem 9.1]{Krause1})). For each $a\geq 0$, the category $FI_{\mathcal R}$ is equipped with a shift functor defined by setting 
\begin{equation*}
\mathbb S^a:FI_{\mathcal R}\longrightarrow FI_{\mathcal R} \qquad \mathbb S^a\mathscr V(S):=\mathscr V(S\sqcup [-a])
\end{equation*} for each $\mathscr V\in FI_{\mathcal R}$ and each finite set $S$, where $[-a]$ is a fixed set of cardinality $a$. In \cite[$\S$ 2.3]{Four1}, an $FI$-module $V$ over a ring
$R$ is said to be torsion if it satisfies
\begin{equation}
V=\underset{a\geq 0}{\bigcup} \textrm{ }Ker(X^a:V\mapsto S^aV)
\end{equation} Here, $S^a$ is the $a$-th shift functor on $FI$-modules over $R$  and $X^a$ is the canonical morphism $V\longrightarrow S^aV$ induced by the inclusion $T\hookrightarrow T
\sqcup [-a]$ for each finite set $T$. In Section 3, we consider a torsion theory $\tau=(\mathcal T,\mathcal F)$ on $Mod-\mathcal R$ and the subcategory of finitely generated $FI$-modules determined by
setting
\begin{equation}\label{intrtrskc}
Ob(\overline{\mathcal T}):=\{\mbox{$\mathscr V\in Ob(FI_{\mathcal R}^{fg})$ $\vert$ $\mathscr V_n\in \mathcal T$ for $n\gg 0$}\}
\end{equation}
Our first result describes the induced torsion theory on $FI^{fg}_{\mathcal R}$ and a formula for the torsion subobject of a finitely generated $FI$-module. 

\begin{ithm} (see \ref{xxP3.2} and \ref{P3.8cf})  Let $\mathcal R$ be a small preadditive category such that
$Mod-\mathcal R$ is locally noetherian. Let $(\mathcal T,\mathcal F)$ be a torsion theory on $Mod-\mathcal R$.

\smallskip
(a) Then, $\overline{\mathcal T}$ is a torsion 
class in the category $FI_{\mathcal R}^{fg}$ of finitely generated $FI$-modules over $\mathcal R$. Additionally, if $\mathcal T$ is a hereditary torsion
class, so is $\overline{\mathcal T}$.  

\smallskip
(b) Suppose that $\mathcal T$ is hereditary. Then, for any $\mathscr V\in FI_{\mathcal R}^{fg}$, the torsion subobject of $\mathscr V$ with respect to the torsion
class $\overline{\mathcal T}$ is given by $\overline{\mathcal T}(\mathscr V)$, where  
\begin{equation}\label{intrtors3}
\overline{\mathcal T}(\mathscr V)(S):=\underset{a\geq 0}{colim}\textrm{ }lim\left(
\begin{CD}\mathscr V(S)@>>>({\mathbb S}^a\mathscr V)(S)@<<<\mathcal T( ({\mathbb S}^a\mathscr V)(S))
\end{CD}\right)
\end{equation} for each finite set $S$. 
\end{ithm}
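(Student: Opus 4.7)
The plan is to establish part (a) by verifying the torsion-class axioms for $\overline{\mathcal T}$ from the corresponding degreewise closure properties of $\mathcal T$, and then prove part (b) by rewriting the formula (\ref{intrtors3}) as a preimage inside a shift and exploiting local noetherianness to reduce the colimit to a single stage.

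For part (a), I would observe that a finite coproduct of eventually-$\mathcal T$ modules is eventually $\mathcal T$ (take the maximum of finitely many threshold indices), and the analogous closure under quotients and under middle terms of short exact sequences is immediate from the corresponding closure properties of $\mathcal T \subseteq Mod-\mathcal R$ applied degreewise. Since $FI_{\mathcal R}$ inherits local noetherianness from $Mod-\mathcal R$ (as recorded in Section 2), every subobject of a finitely generated $FI$-module remains in $FI_{\mathcal R}^{fg}$; hence, if $\mathcal T$ is hereditary and $\mathscr W \hookrightarrow \mathscr V \in \overline{\mathcal T}$, the degreewise inclusion $\mathscr W_n \hookrightarrow \mathscr V_n$ places $\mathscr W_n$ in $\mathcal T$ for $n \gg 0$, so $\overline{\mathcal T}$ is hereditary.

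For part (b), I would first identify the limit in (\ref{intrtors3}) as the pullback of a cospan along a monomorphism, hence as the preimage
\[
K^a(S) := (X^a_S)^{-1}\bigl(\mathcal T(\mathbb S^a \mathscr V(S))\bigr),
\]
where $X^a:\mathscr V \to \mathbb S^a \mathscr V$ is the canonical map. Using the naturality of $X^a$ in $FI$-morphisms $S \to S'$ and the fact that any morphism of $\mathcal R$-modules sends the torsion subobject into the torsion subobject, I would check that each $K^a$ is a subfunctor of $\mathscr V$ in $FI_{\mathcal R}$; since $X^{a+1}$ factors through $X^a$ via the shift map $\mathbb S^a \mathscr V \to \mathbb S^{a+1} \mathscr V$ (which again preserves torsion), the sequence $K^0 \subseteq K^1 \subseteq \cdots$ is increasing, so $\overline{\mathcal T}(\mathscr V) = \bigcup_a K^a$ as a subfunctor of $\mathscr V$. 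Noetherianness of $\mathscr V$ makes $\overline{\mathcal T}(\mathscr V)$ finitely generated, so a finite generating set lies in a single $K^{a_0}$ for $a_0$ sufficiently large, whence $\overline{\mathcal T}(\mathscr V) = K^{a_0}$.

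It remains to verify that $K^{a_0}$ is $\overline{\mathcal T}$-torsion and absorbs every $\overline{\mathcal T}$-torsion subobject of $\mathscr V$. Maximality is direct: if $\mathscr W_m \in \mathcal T$ for $m \geq N$, then for $w \in \mathscr W(S)$ and any $a$ with $|S|+a \geq N$, the image $X^a(w)$ lies in $\mathscr W(S \sqcup [-a]) \in \mathcal T$ and hence in $\mathcal T(\mathbb S^a \mathscr V(S))$, so $w \in K^a(S) \subseteq \overline{\mathcal T}(\mathscr V)(S)$. The main obstacle is showing $K^{a_0}_n \in \mathcal T$ for $n \gg 0$: there is a short exact sequence $0 \to Ker(X^{a_0}_n) \to K^{a_0}_n \to I_n \to 0$ with $I_n \subseteq \mathcal T(\mathbb S^{a_0} \mathscr V_n)$, and hereditariness of $\mathcal T$ gives $I_n \in \mathcal T$. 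To control the kernel I would invoke the classical fact, valid in the locally noetherian setting, that a finitely generated CEF-torsion $FI$-module vanishes in high degrees --- explicitly, a generator $v \in \mathscr V(S_0)$ with $X^a(v) = 0$ produces a subobject vanishing in all degrees $\geq |S_0|+a$, by factoring each injection $S_0 \hookrightarrow T$ through $S_0 \sqcup [-a]$. Consequently $Ker(X^{a_0}_n) = 0$ for $n \gg 0$, and closure of $\mathcal T$ under extensions yields $K^{a_0}_n \in \mathcal T$ for $n \gg 0$, completing the proof.
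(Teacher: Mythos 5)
Your part (a) has a genuine gap. You verify that $\overline{\mathcal T}$ is closed under extensions, quotients, and finite coproducts, but this does not by itself establish that $\overline{\mathcal T}$ is a torsion class: the standard criterion (``closed under quotients, extensions, and coproducts $\Rightarrow$ torsion class'') applies to complete cocomplete abelian categories, whereas $FI_{\mathcal R}^{fg}$ does not contain arbitrary coproducts. The paper addresses this precisely by invoking Proposition~\ref{xwP3.1}: in an abelian category in which every object is noetherian, a full replete subcategory closed under extensions and quotients, paired with its right orthogonal, is automatically a torsion pair. You need to invoke this (or argue directly that, by noetherianness, the sum of all $\overline{\mathcal T}$-subobjects of any $\mathscr V \in FI_{\mathcal R}^{fg}$ stabilizes and remains in $\overline{\mathcal T}$ by closure under quotients and extensions). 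Note also that the explicit formula in your part (b) cannot rescue this for general $\mathcal T$, since it requires hereditariness, while the torsion-class claim in (a) does not.

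Your part (b) is essentially correct and, at the key step, takes a genuinely different route from the paper. Both you and the paper stabilize the increasing chain $K^0 \subseteq K^1 \subseteq \cdots$ to a single $K^{a_0}$ by noetherianness, and your maximality verification matches the paper's fiber-square argument in Theorem~\ref{P3.8cf}. To show $K^{a_0}_n \in \mathcal T$ for $n \gg 0$, though, Lemma~\ref{Lem3.7k} argues that the image of $\psi_b^{\mathscr W}$ lies in $\mathcal T$ for $b \geq a_0$ (via hereditariness) and then, using that $\mathscr W$ is generated in some bounded degree $d$ (so $H_0(\mathscr W)_n = 0$ for $n > d$), exhibits $\mathscr W_n$ for $n > a_0 + d$ as a sum of such images. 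Your approach instead decomposes $K^{a_0}_n$ as an extension $0 \to Ker(X^{a_0}_n) \to K^{a_0}_n \to I_n \to 0$, obtaining $I_n \in \mathcal T$ from hereditariness and $Ker(X^{a_0})_n = 0$ for $n \gg 0$ by the factoring argument (since $Ker(X^{a_0})$ is a finitely generated subfunctor whose generators vanish after shifting by $a_0$). Both arguments are sound; your decomposition makes the two sources of torsion elements somewhat more transparent, while the paper's stays closer to the $H_0$ machinery it has already built.
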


\smallskip
We will say that an $FI$-module $\mathscr V$ is shift finitely generated if there exists $d\geq 0$ such that $\mathbb S^d\mathscr V$ is finitely generated. We denote by
$FI^{sfg}_{\mathcal R}$ the full subcategory of shift finitely generated $FI$-modules. Given the torsion
theory $(\mathcal T,\mathcal F)$ on $Mod-\mathcal R$, we now consider
\begin{equation}\label{intrtrskc2}
Ob(\overline{\mathcal T}^{sfg}):=\{\mbox{$\mathscr V\in Ob(FI_{\mathcal R}^{sfg})$ $\vert$ Every finitely generated $\mathscr W\subseteq \mathscr V$ lies in $\overline{\mathcal T}$}\}
\end{equation} The next result shows that $FI^{sfg}_{\mathcal R}$ is a Serre subcategory of $FI_{\mathcal R}$ and that the formula in \eqref{intrtors3}
may be extended to   describe the induced torsion theory on $FI^{sfg}_{\mathcal R}$. For this, we also obtain some intermediate results on torsion in locally noetherian Grothendieck
categories.

\begin{ithm} (see \ref{P3.11}, \ref{P3.13} and \ref{P3.17cf}) Suppose that $Mod-\mathcal R$ is locally noetherian. 

\smallskip
(a) Then, the full  subcategory $FI^{sfg}_{\mathcal R}$ given by 
\begin{equation}
Ob(FI^{sfg}_{\mathcal R}):=\{\mbox{$\mathscr V\in Ob(FI_{\mathcal R})$ $\vert$ $\mathbb S^d\mathscr V$ is finitely generated for some $d\geq 0$ }\}
\end{equation}
is a Serre subcategory of $FI_{\mathcal R}$, i.e., it is closed under extensions, quotients and subobjects.

\smallskip
(b) If $\tau=(\mathcal T,\mathcal F)$ is a hereditary torsion theory on $Mod-\mathcal R$, then
$\overline{\mathcal T}^{sfg}$ is a hereditary torsion class in $FI^{sfg}_{\mathcal R}$.  For any $\mathscr V\in FI_{\mathcal R}^{sfg}$, the torsion subobject of $\mathscr V$ with respect to the torsion
class $\overline{\mathcal T}^{sfg}$ is given by 
\begin{equation}\label{intrtors317}
\overline{\mathcal T}^{sfg}(\mathscr V)(S):=\underset{a\geq 0}{colim}\textrm{ }lim\left(
\begin{CD}\mathscr V(S)@>>>({\mathbb S}^a\mathscr V)(S)@<<<\mathcal T( ({\mathbb S}^a\mathscr V)(S))
\end{CD}\right)
\end{equation} for each finite set $S$.

\end{ithm}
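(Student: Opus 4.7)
The plan is to reduce both parts to the fg setting that the first theorem handles, via three workhorses: exactness of the shift functor $\mathbb S^a$ on $FI_{\mathcal R}$ (from the pointwise structure of the functor category), preservation of finite generation by $\mathbb S^a$ (computed directly by decomposing $\mathbb S^a$ applied to a standard generator $\mathcal R[\mathrm{Inj}(n,-)]$ into a finite sum of generators indexed by how many elements of $[-a]$ an injection uses), and the key identity $\mathbb S^d F(\mathscr V)=\overline{\mathcal T}(\mathbb S^d\mathscr V)$, where $F(\mathscr V)$ denotes the RHS of \eqref{intrtors317} and $d$ is chosen so that $\mathbb S^d\mathscr V$ is finitely generated.

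For part (a), the three Serre closure conditions all follow from the first two workhorses. Exactness handles quotients (a surjection $\mathscr V\twoheadrightarrow\mathscr W$ gives a surjection $\mathbb S^d\mathscr V\twoheadrightarrow\mathbb S^d\mathscr W$) and subobjects ($\mathbb S^d\mathscr W\hookrightarrow\mathbb S^d\mathscr V$, and subobjects of finitely generated objects in the locally noetherian category $FI_{\mathcal R}$ are finitely generated). For an extension $0\to\mathscr V'\to\mathscr V\to\mathscr V''\to 0$ with witnesses $d',d''$, take $d=\max(d',d'')$; using that $\mathbb S^{d-d'}$ and $\mathbb S^{d-d''}$ preserve finite generation, both flanks of $\mathbb S^d$ applied to the sequence are f.g., hence $\mathbb S^d\mathscr V$ is as well.

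For the hereditary torsion class part of (b), I would verify closure of $\overline{\mathcal T}^{sfg}$ under subobjects, quotients, and extensions inside $FI^{sfg}_{\mathcal R}$. Subobject closure is tautological. For a quotient $\phi:\mathscr V\twoheadrightarrow\mathscr W$ and a f.g.\ $\mathscr W'\subseteq\mathscr W$, local finite generation yields a f.g.\ $\mathscr V'\subseteq\phi^{-1}(\mathscr W')$ with $\phi(\mathscr V')=\mathscr W'$; since $\mathscr V'\in\overline{\mathcal T}$ and $\overline{\mathcal T}$ is closed under quotients, $\mathscr W'\in\overline{\mathcal T}$. For an extension $0\to\mathscr V'\to\mathscr V\to\mathscr V''\to 0$ and f.g.\ $\mathscr W\subseteq\mathscr V$, apply the short exact sequence $0\to\mathscr W\cap\mathscr V'\to\mathscr W\to\mathscr W/(\mathscr W\cap\mathscr V')\to 0$; both flanks are f.g.\ by local noetherianity, the first sits inside $\mathscr V'$ and the second embeds into $\mathscr V''$ via the second isomorphism theorem, so both lie in $\overline{\mathcal T}$, and extension closure of $\overline{\mathcal T}$ gives $\mathscr W\in\overline{\mathcal T}$.

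For the torsion formula, one first checks that $F(\mathscr V)$ is an $FI$-submodule of $\mathscr V$ (the condition $X^a(v)\in\mathcal T$ is preserved by arbitrary $\mathcal R$-maps and by $FI$-transition maps, since the shift $X^a$ is natural). The key identity $\mathbb S^d F(\mathscr V)=\overline{\mathcal T}(\mathbb S^d\mathscr V)$ is then a direct comparison of the two defining colimits of pullbacks, which describe the same subset of $\mathscr V(S\sqcup[-d])$ via the renaming of the shift index. Given this, $F(\mathscr V)\in\overline{\mathcal T}^{sfg}$: any f.g.\ $\mathscr W\subseteq F(\mathscr V)$ has $\mathbb S^d\mathscr W$ a f.g.\ subobject of $\overline{\mathcal T}(\mathbb S^d\mathscr V)\in\overline{\mathcal T}$, so $\mathbb S^d\mathscr W\in\overline{\mathcal T}$ by heredity of $\overline{\mathcal T}$, which says $\mathscr W_m\in\mathcal T$ for $m\gg 0$ and hence $\mathscr W\in\overline{\mathcal T}$. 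For maximality, given a f.g.\ $\mathscr W'\subseteq\mathscr V/F(\mathscr V)$ with $\mathscr W'\in\overline{\mathcal T}$, lift to $\mathscr W\supseteq F(\mathscr V)$; then $\mathbb S^d\mathscr W'$ is a f.g.\ subobject of $\mathbb S^d\mathscr V/\overline{\mathcal T}(\mathbb S^d\mathscr V)$, which is torsion-free in $FI^{fg}_{\mathcal R}$ by the first theorem, forcing $\mathbb S^d\mathscr W'=0$ and hence $\mathscr W_m=F(\mathscr V)_m$ for $m\geq d$. The main obstacle is the low-degree cleanup: for $v\in\mathscr W(S)$ with $|S|<d$, applying $X^{d-|S|}$ lands $v$ inside $F(\mathscr V)$ at a $d$-element set, which yields a shift witness $a$ with $X^{a+d-|S|}(v)\in\mathcal T$, so $v\in F(\mathscr V)(S)$ by the colimit definition; thus $\mathscr W=F(\mathscr V)$ and $\mathscr W'=0$, completing the identification $F(\mathscr V)=\overline{\mathcal T}^{sfg}(\mathscr V)$.
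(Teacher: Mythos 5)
Your proposal is correct, but it takes a genuinely different route from the paper, most visibly in part (b). For part (a), you invoke the general fact that an extension of finitely generated objects in a Grothendieck category is finitely generated, whereas the paper lifts generating epimorphisms through the projectives $_d\mathscr M_r$; these are equivalent incarnations of the same standard argument. The real divergence is in part (b). The paper proves that $\overline{\mathcal T}^{sfg}$ is a torsion class by constructing the torsion subobject abstractly as the sum $\mathscr T$ of all finitely generated torsion subobjects (Lemma~\ref{L3.12}) and showing $\mathscr V/\mathscr T\in\overline{\mathcal F}^{sfg}$ (Proposition~\ref{P3.13}); it then establishes the formula \eqref{intrtors317} separately, by proving that hereditary torsion commutes with filtered colimits in any locally noetherian Grothendieck category (Lemma~\ref{L3.15}, Proposition~\ref{P3.16}) and applying this to the system of finitely generated subobjects of $\mathscr V$ (Proposition~\ref{P3.14}, Theorem~\ref{P3.17cf}). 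You instead use the shift-transport identity $\mathbb S^d F(\mathscr V)=\overline{\mathcal T}(\mathbb S^d\mathscr V)$, with $\mathbb S^d\mathscr V$ finitely generated, to verify in one stroke that $F(\mathscr V)$ lies in $\overline{\mathcal T}^{sfg}$ (heredity of $\overline{\mathcal T}$ applied to f.g.\ subobjects of $\overline{\mathcal T}(\mathbb S^d\mathscr V)$) and that $\mathscr V/F(\mathscr V)$ carries no nonzero f.g.\ torsion subobject (torsion-freeness of $\mathbb S^d\mathscr V/\overline{\mathcal T}(\mathbb S^d\mathscr V)$ in degrees $\geq d$, plus your low-degree cleanup via the colimit definition of $F$). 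This is more direct and more specifically adapted to the $FI$ setting, at the cost of not producing the standalone colimit-commutation results (Lemma~\ref{L3.15}, Proposition~\ref{P3.16}) which the paper reuses. One expository caveat: your closure verifications (subobjects, quotients, extensions) by themselves do not make $\overline{\mathcal T}^{sfg}$ a torsion class, since $FI^{sfg}_{\mathcal R}$ has neither all coproducts nor all-noetherian objects, a point the paper flags explicitly before Lemma~\ref{L3.12}; the actual torsion decomposition is supplied only by your second block of argument identifying $F(\mathscr V)$ as the torsion subobject, so the two blocks must be read in tandem.
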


In the rest of this paper, we always suppose that $\tau=(\mathcal T,\mathcal F)$ is a hereditary torsion  theory on $Mod-\mathcal R$. In Section 4, we consider the subcategory
$\hat{\mathcal T}$ of $FI_{\mathcal R}$ defined by setting
\begin{equation}\label{intr4.3ne}
\begin{array}{c}
Ob({\hat{\mathcal T}}):=\{\mbox{$\mathscr V\in Ob(FI_{\mathcal R})$ $\vert$ $\mathscr V_n\in \mathcal T$ for $n\gg 0$}\}\\
\end{array}
\end{equation} It is clear that we have $\hat{\mathcal T}\cap FI^{fg}_{\mathcal R}=\overline{\mathcal T}$. While $\hat{\mathcal T}$ is not a torsion class (it is not necessarily
closed under direct sums), we observe that it is a Serre subcategory of $FI_{\mathcal R}$.  

\smallskip
We recall that an object $L$ in a Grothendieck category $\mathcal A$ is said to be closed with respect to a Serre subcategory $\mathcal C\subseteq \mathcal A$ if $Hom(u,L):
Hom(B,L)\longrightarrow Hom(A,L)$ is an isomorphism for every $u:A\longrightarrow B$ in $\mathcal A$ such that $Ker(u)$, $Coker(u)\in \mathcal C$. We first develop a general result 
(see Proposition \ref{P4.4}) that in a locally noetherian Grothendieck category, it suffices to check this criterion with $A$, $B$ finitely generated. 

\smallskip Corresponding to each
$\mathscr V\in FI_{\mathcal R}$, we want to construct an object that is closed with respect to the Serre subcategory $\hat{\mathcal T}$. In other words, we will describe
a functor from $FI_{\mathcal R}$ taking values in the subcategory $Cl(\hat{\mathcal T})$ of $\hat{\mathcal T}$-closed objects.  For this, we first express $\hat{\mathcal T}$ as a union
$ {\hat{\mathcal T}}=\underset{a\geq 0}{\bigcup}
{\hat{\mathcal T^a}}$ where 
\begin{equation}
Ob({\hat{\mathcal T^a}}):=\{\mbox{$\mathscr V\in Ob(FI_{\mathcal R})$ $\vert$ $\mathscr V_n\in \mathcal T$ for all $n\geq a$}\} \qquad \forall\textrm{ }a\geq 0
\end{equation} We will also need the functor $\mathbb E_\tau$ which takes an $FI$-module $\mathscr V:FI\longrightarrow Mod-\mathcal R$ to its composition with 
the torsion envelope in $Mod-\mathcal R$. We obtain the following result.

\begin{ithm}\label{InTh3q} (see \ref{P4.10} and \ref{P4.11}) Let $Mod-\mathcal R$ be locally noetherian. Let $\tau=(\mathcal T,\mathcal F)$ be a hereditary torsion theory on $Mod-\mathcal R$. Then, we have a functor $\mathbb L_\tau:FI_{\mathcal R}\longrightarrow Cl(\hat{\mathcal T})$ and a canonical morphism
\begin{equation}
l_\tau (\mathscr V): \mathscr V\longrightarrow \mathbb L_\tau(\mathscr V):=\underset{k\geq 0}{\varinjlim}\textrm{ }\mathbb L_\tau^k(\mathscr V)
\end{equation} for each $\mathscr V\in FI_{\mathcal R}$. Here, $\mathbb L_\tau^k=\mathbb T^k\circ \mathbb S^k\circ \mathbb E_\tau$, where $\mathbb T^k:FI_{\mathcal R}
\longrightarrow FI_{\mathcal R}$ is the right adjoint to the shift functor $\mathbb S^k:FI_{\mathcal R}
\longrightarrow FI_{\mathcal R}$.

\end{ithm}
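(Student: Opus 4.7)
My plan is to build $\mathbb{L}_\tau$ as a filtered colimit of the finite stages $\mathbb{L}_\tau^k = \mathbb{T}^k \circ \mathbb{S}^k \circ \mathbb{E}_\tau$, exhibit a canonical morphism $\mathscr{V} \to \mathbb{L}_\tau(\mathscr{V})$ as a compatible family of unit maps, and then verify $\hat{\mathcal{T}}$-closure by reducing (via Proposition \ref{P4.4}) to closure of each $\mathbb{L}_\tau^k(\mathscr{V})$ with respect to $\hat{\mathcal{T}}^k$, finally invoking local noetherianity to pass to the colimit.

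The canonical morphism $l_\tau^k(\mathscr{V}):\mathscr{V}\to \mathbb{L}_\tau^k(\mathscr{V})$ will be the composite of the pointwise torsion-envelope unit $\eta_\tau:\mathrm{id}\to \mathbb{E}_\tau$ with the unit $\eta^k:\mathrm{id}\to \mathbb{T}^k\mathbb{S}^k$ of the adjunction $\mathbb{S}^k\dashv \mathbb{T}^k$ applied at $\mathbb{E}_\tau\mathscr{V}$. Since $\mathbb{S}^l$ factors canonically as $\mathbb{S}^{l-k}\circ \mathbb{S}^k$ (by associativity of disjoint union), uniqueness of right adjoints forces $\mathbb{T}^l\cong \mathbb{T}^k\circ \mathbb{T}^{l-k}$. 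For $k\leq l$, applying $\mathbb{T}^k$ to $\eta^{l-k}$ at $\mathbb{S}^k\mathbb{E}_\tau\mathscr{V}$ then produces a structural morphism $\mathbb{L}_\tau^k(\mathscr{V})\to \mathbb{L}_\tau^l(\mathscr{V})$ compatible with $l_\tau^k(\mathscr{V})$; the triangle identities verify that these assemble into a directed system, and its filtered colimit defines the functor $\mathbb{L}_\tau$ and the morphism $l_\tau(\mathscr{V})$.

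The central step is showing each $\mathbb{L}_\tau^k(\mathscr{V})$ is closed with respect to $\hat{\mathcal{T}}^k$. For $u:A\to B$ in $FI_{\mathcal{R}}$ with $\ker(u),\mathrm{coker}(u)\in \hat{\mathcal{T}}^k$, the adjunction rewrites $\mathrm{Hom}(u,\mathbb{L}_\tau^k\mathscr{V})$ as $\mathrm{Hom}(\mathbb{S}^k u,\mathbb{S}^k\mathbb{E}_\tau\mathscr{V})$. The shift $\mathbb{S}^k$ is exact, being precomposition with the endofunctor $-\sqcup [-k]$ on $FI$, and it translates the condition ``values in $\mathcal{T}$ for $n\geq k$'' into ``values in $\mathcal{T}$ in every degree'', so $\ker(\mathbb{S}^k u)$ and $\mathrm{coker}(\mathbb{S}^k u)$ lie pointwise in $\mathcal{T}$, while $\mathbb{S}^k\mathbb{E}_\tau\mathscr{V}$ is pointwise $\tau$-closed. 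Since $\mathrm{Hom}$ in $FI_{\mathcal{R}}$ is the end of the pointwise Hom-groups over $FI$, the pointwise closure isomorphisms $\mathrm{Hom}(\mathbb{S}^k u(S),\mathbb{E}_\tau\mathscr{V}(S\sqcup[-k]))$ assemble into the required global isomorphism.

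Finally, to deduce $\hat{\mathcal{T}}$-closedness of $\mathbb{L}_\tau(\mathscr{V})$, I apply Proposition \ref{P4.4} to restrict to $u:A\to B$ between finitely generated objects with $\ker(u),\mathrm{coker}(u)\in \hat{\mathcal{T}}$. Because $FI_{\mathcal{R}}$ is locally noetherian, $\ker(u)$ and $\mathrm{coker}(u)$ are themselves finitely generated, hence lie in $\overline{\mathcal{T}}$; their values stabilize in $\mathcal{T}$ beyond some finite degree $k$, placing them in $\hat{\mathcal{T}}^k$. The previous paragraph then yields isomorphisms $\mathrm{Hom}(u,\mathbb{L}_\tau^l\mathscr{V})$ for all $l\geq k$, and since $A$ and $B$ are finitely presented in a locally noetherian category, $\mathrm{Hom}(A,-)$ and $\mathrm{Hom}(B,-)$ commute with the filtered colimit $\varinjlim \mathbb{L}_\tau^k$. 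The hardest part, I expect, is the end-argument in the closure step: one must avoid the tempting but false assertion that $\mathrm{Ext}^1$ in a functor category is computed pointwise, and instead exploit that closedness depends only on $\mathrm{Hom}$, where the end formalism behaves transparently.
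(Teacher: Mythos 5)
Your proposal follows essentially the same route as the paper. You bundle into a single paragraph what the paper breaks into Lemma \ref{Lem4.5} (pointwise $\mathcal{T}$-closure implies $\hat{\mathcal{T}^0}$-closure, which is your end/wedge argument), Lemma \ref{Lem4.8} and Proposition \ref{P4.9} (the adjunction shift $\mathrm{Hom}(u,\mathbb{T}^k\mathscr{L})\cong\mathrm{Hom}(\mathbb{S}^ku,\mathscr{L})$ together with exactness of $\mathbb{S}^k$), and Proposition \ref{P4.10}; your final colimit step is identical to Proposition \ref{P4.11}, using Proposition \ref{P4.4} to restrict to finitely generated $A,B$ and then commuting $\mathrm{Hom}$ with the filtered colimit via finite presentability in the locally noetherian category.
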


In Section 5, our aim is to prove a result similar to Theorem \ref{InTh3q} for shift finitely generated objects by considering the subcategory
\begin{equation}\label{intreq5.1nw}
Ob(\widetilde{\mathcal T}):=\{\mbox{$\mathscr V\in Ob(FI_{\mathcal R})$ $\vert$ Every finitely generated $\mathscr W\subseteq \mathscr V$ lies in $\overline{\mathcal T}$
 }\}
\end{equation}
which satisfies  $\widetilde{\mathcal T}\cap FI^{sfg}_{\mathcal R}=\overline{\mathcal T}^{sfg}$. Unlike in the case of $\hat{\mathcal T}$ which is only a Serre subcategory, we will show that $\widetilde{\mathcal T}$
is actually a hereditary torsion class. Further, the $\widetilde{\mathcal T}$-closed objects actually coincide with the $\hat{\mathcal T}$-closed objects. In other words, we have the following result.

\begin{ithm} (see \ref{P5.4nw} and \ref{Tmh5.8}) Let  $Mod-\mathcal R$ be a locally noetherian category and $\tau=(\mathcal T,\mathcal F)$ a hereditary torsion theory on $Mod-\mathcal R$. Then,

\smallskip
(a) The full subcategory $\widetilde{\mathcal T}$ is a hereditary torsion class.

\smallskip
(b) An object in $FI_{\mathcal R}$ is closed with respect to $\widetilde{\mathcal T}$ if and only if it is closed with respect to $\hat{\mathcal T}$, i.e., 
$Cl(\hat{\mathcal T})=Cl(\widetilde{\mathcal T})$. In particular, we have a functor $\mathbb L_\tau:FI_{\mathcal R}\longrightarrow Cl(\hat{\mathcal T})=Cl(\widetilde{\mathcal T})$. 

\end{ithm}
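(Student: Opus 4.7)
For part (a), the plan is to verify directly that $\widetilde{\mathcal T}$ satisfies the four defining properties of a hereditary torsion class in $FI_{\mathcal R}$: closure under subobjects, quotients, extensions and arbitrary direct sums. Closure under subobjects is tautological from the definition, since any finitely generated subobject of $\mathscr U \subseteq \mathscr V$ is also a finitely generated subobject of $\mathscr V$. For closure under a quotient $q:\mathscr V \twoheadrightarrow \mathscr V''$, I would cover any finitely generated $\mathscr W'' \subseteq \mathscr V''$ by a finitely generated $\mathscr W \subseteq \mathscr V$ obtained by lifting finitely many generating morphisms $P_{\lambda_j} \to \mathscr W''$ along $q$ using projectivity of the $P_{\lambda_j}$; then $\mathscr W \in \overline{\mathcal T}$ and $\mathscr W''$ is a quotient of $\mathscr W$ in $FI_{\mathcal R}^{fg}$, hence in $\overline{\mathcal T}$ by Theorem (a). For an extension $0 \to \mathscr V' \to \mathscr V \to \mathscr V'' \to 0$ with both ends in $\widetilde{\mathcal T}$, a finitely generated $\mathscr W \subseteq \mathscr V$ is noetherian, so $\mathscr W \cap \mathscr V'$ is finitely generated and lies in $\overline{\mathcal T}$, while $\mathscr W/(\mathscr W \cap \mathscr V')$ embeds as a finitely generated subobject of $\mathscr V''$ in $\overline{\mathcal T}$; closure of $\overline{\mathcal T}$ under extensions yields $\mathscr W \in \overline{\mathcal T}$. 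Finally, for a direct sum $\bigoplus_{i \in I} \mathscr V_i$, any finitely generated subobject lies in some finite subsum $\bigoplus_{i \in I_0} \mathscr V_i$ (since finitely generated objects in a Grothendieck category factor morphisms through directed unions of subobjects), which is in $\widetilde{\mathcal T}$ by iterated extensions.

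For part (b), I would first establish the inclusion $\hat{\mathcal T} \subseteq \widetilde{\mathcal T}$: if $\mathscr V \in \hat{\mathcal T}$ and $\mathscr W \subseteq \mathscr V$ is finitely generated, then $\mathscr W_n \subseteq \mathscr V_n \in \mathcal T$ for $n \gg 0$, and hereditariness of $\mathcal T$ gives $\mathscr W \in \overline{\mathcal T}$. This immediately yields $Cl(\widetilde{\mathcal T}) \subseteq Cl(\hat{\mathcal T})$, because closedness with respect to a larger Serre subcategory is a stronger requirement.

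The reverse inclusion $Cl(\hat{\mathcal T}) \subseteq Cl(\widetilde{\mathcal T})$ is the main content. Here the plan is to invoke Proposition \ref{P4.4} for the Serre subcategory $\widetilde{\mathcal T}$ (legitimate thanks to part (a) and local noetherianness): an object $L$ is $\widetilde{\mathcal T}$-closed iff $Hom(u,L)$ is an isomorphism for every morphism $u:A \to B$ between finitely generated objects with $Ker(u), Coker(u) \in \widetilde{\mathcal T}$. Local noetherianness of $FI_{\mathcal R}$ ensures $Ker(u)$ and $Coker(u)$ are themselves finitely generated, and the identity $\widetilde{\mathcal T} \cap FI_{\mathcal R}^{fg} = \overline{\mathcal T}$ together with $\overline{\mathcal T} \subseteq \hat{\mathcal T}$ places them inside $\hat{\mathcal T}$; hence any $\hat{\mathcal T}$-closed $L$ satisfies the criterion and belongs to $Cl(\widetilde{\mathcal T})$. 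The functor $\mathbb L_\tau: FI_{\mathcal R} \to Cl(\hat{\mathcal T})$ from Theorem \ref{InTh3q} then automatically lands in $Cl(\widetilde{\mathcal T})$.

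The main obstacle I anticipate lies in part (a), particularly the quotient case where a covering finitely generated subobject of $\mathscr V$ must be produced, and the direct sum case where one must know that a finitely generated subobject factors through a finite subsum. Once part (a) is in place, part (b) reduces to the clean observation that the two Serre subcategories $\hat{\mathcal T}$ and $\widetilde{\mathcal T}$ coincide on finitely generated objects, so that Proposition \ref{P4.4} supplies identical testing data for both notions of closedness.
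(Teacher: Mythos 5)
Your proposal is correct, and it diverges from the paper in an interesting way on part (b).

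For part (a), your arguments follow the same outline as the paper (Lemmas \ref{L5.1nw}, \ref{L5.2nw}, \ref{L5.3nw}). The only difference is in the quotient case: you lift finitely many generating morphisms $P_{\lambda_j}\to\mathscr W''\hookrightarrow\mathscr V''$ through $q$ using projectivity of the $_d\mathscr M_r$, whereas the paper observes that the images of the finitely generated subobjects of $q^{-1}(\mathscr W'')$ form a filtered system of subobjects of $\mathscr W''$ whose union is $\mathscr W''$, so one of them already surjects. Both arguments are valid and equally elementary; yours is arguably crisper since it produces the covering finitely generated subobject in one step.

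For part (b), your route is genuinely different from the paper's and, in fact, shorter. The paper proves $Cl(\hat{\mathcal T})\subseteq Cl(\widetilde{\mathcal T})$ by first equipping every $\mathscr V\in\widetilde{\mathcal T}$ with an exhaustive filtration $F^0\mathscr V\subseteq F^1\mathscr V\subseteq\cdots$ with $F^n\mathscr V\in\hat{\mathcal T^n}$ (Proposition \ref{P5.5}), then passing to limits along this filtration separately for the epimorphism and monomorphism cases (Lemmas \ref{L5.6f} and \ref{L5.7f}), and finally combining via the epi-mono factorization of a general $\widetilde{\mathcal T}$-isomorphism (Theorem \ref{Tmh5.8}). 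Your proposal instead applies Proposition \ref{P4.4} directly to the Serre subcategory $\widetilde{\mathcal T}$ (legitimate once part (a) is established, since a hereditary torsion class in a Grothendieck category is in particular a Serre subcategory): since $FI_{\mathcal R}$ is locally noetherian, a $\widetilde{\mathcal T}$-isomorphism between finitely generated objects has finitely generated kernel and cokernel, and $\widetilde{\mathcal T}\cap FI^{fg}_{\mathcal R}=\overline{\mathcal T}=\hat{\mathcal T}\cap FI^{fg}_{\mathcal R}$ then forces these to lie in $\hat{\mathcal T}$. Thus $\widetilde{\mathcal T}$-isomorphisms and $\hat{\mathcal T}$-isomorphisms between finitely generated objects are literally the same class of morphisms, and the reduction to finitely generated test objects supplied by Proposition \ref{P4.4} collapses the two notions of closedness immediately. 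This sidesteps the filtration and the limit arguments entirely. What the paper's approach buys in exchange is Proposition \ref{P5.5} itself, which is a structurally informative statement about $\widetilde{\mathcal T}$-objects and is not recoverable from your shorter argument; but as a proof of the stated equality $Cl(\hat{\mathcal T})=Cl(\widetilde{\mathcal T})$, your version is cleaner.
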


In Section 6, we begin by providing an inductive description for finitely generated $FI$-modules over $\mathcal R$. For this, we have to work with functors
$H_a:FI_{\mathcal R}\longrightarrow FI_{\mathcal R}$, $a\geq 0$,  which are defined as homology groups of a complex similar to the construction in  \cite[$\S$ 2.4]{Four1}. The following
result is analogous to \cite[Theorem C]{Four1}.

\begin{ithm}\label{InTh5q} (see \ref{T5.5}) Suppose that $Mod-\mathcal R$ is locally noetherian. Let $\mathscr V\in FI_{\mathcal R}$ be a finitely generated object.  Then, there exists $N\geq 0$ such that
\begin{equation}\label{intr5.17xe}
\underset{\tiny \begin{array}{c} T\subseteq S \\ |T|\leq N\\ \end{array}}{colim}\textrm{ }\mathscr V(T)=\mathscr V(S)
\end{equation} for each finite set $S$.

\end{ithm}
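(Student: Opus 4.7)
For each finite set $S$ and each $N \geq 0$, introduce the functor
$C_N(\mathscr W)(S) := \underset{T \subseteq S,\, |T|\leq N}{\mathrm{colim}}\, \mathscr W(T)$,
the colimit being taken along the inclusions $T \hookrightarrow T' \hookrightarrow S$. Since evaluation $\mathscr W \mapsto \mathscr W(T)$ is exact and colimits in $Mod$-$\mathcal R$ are right exact, the assignment $\mathscr W \mapsto C_N(\mathscr W)(S)$ is right exact in $\mathscr W$, and there is a natural comparison map $C_N(\mathscr W)(S) \to \mathscr W(S)$ induced by the inclusions $T \hookrightarrow S$. The plan is to verify the claim first for the finitely generated projective generators of $FI_{\mathcal R}$ recalled in Section 2, and then to bootstrap to a general finitely generated $\mathscr V$ using local noetherianness.

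For the projective generator $\mathbb P(m, r)$ associated to $m \geq 0$ and $r \in Ob(\mathcal R)$, one has $\mathbb P(m, r)(S)$ a direct sum indexed by injections $f:[m] \hookrightarrow S$ of copies of the right $\mathcal R$-module $\mathcal R(-, r)$, with $FI$-functoriality given by post-composing $f$ with morphisms of $FI$. For $N \geq m$ I will show the comparison map $C_N(\mathbb P(m, r))(S) \to \mathbb P(m, r)(S)$ is an isomorphism. Surjectivity is immediate: each $f:[m] \hookrightarrow S$ factors through its image, a subset of $S$ of size $m \leq N$, so the $f$-summand lies in the image of $\mathbb P(m, r)(\mathrm{im}(f))$. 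For injectivity, if representatives from $\mathbb P(m, r)(T_1)$ and $\mathbb P(m, r)(T_2)$ agree in $\mathbb P(m, r)(S)$, the common underlying injection has image contained in $T_1 \cap T_2$, so both representatives descend to a single element of $\mathbb P(m, r)(T_1 \cap T_2)$ and are identified in the colimit. A finite direct sum of such $\mathbb P(m_i, r_i)$ satisfies the same property as soon as $N \geq \max_i m_i$.

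Since $\mathscr V$ is finitely generated, I choose a surjection $P \twoheadrightarrow \mathscr V$ with $P$ a finite direct sum of projective generators. Because $Mod$-$\mathcal R$ is locally noetherian, $FI_{\mathcal R}$ is locally noetherian (as recalled in Section 2), so the kernel $K \subseteq P$ is itself finitely generated and admits a surjection $Q \twoheadrightarrow K$ from a finite direct sum $Q$ of projective generators. I choose $N$ large enough that $N \geq m$ for every summand $\mathbb P(m, r)$ occurring in either $P$ or $Q$. Applying the right-exact functor $C_N(-)(S)$ to the right-exact sequence $Q \to P \to \mathscr V \to 0$ and comparing with the evaluation $Q(S) \to P(S) \to \mathscr V(S) \to 0$ yields a commutative diagram with exact rows in which the two leftmost vertical maps are isomorphisms by the previous paragraph. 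A standard diagram chase (equivalently, the five lemma) then forces $C_N(\mathscr V)(S) \to \mathscr V(S)$ to be an isomorphism for every $S$, which is the desired conclusion.

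The main obstacle is the projective case: one must identify the finitely generated projective generators of $FI_{\mathcal R}$ concretely enough to see that each is ``supported in degree $m$'' in the colimit sense. Once this is in place, the passage to a general finitely generated $\mathscr V$ is a formal bootstrap relying only on local noetherianness (to cut off $\mathscr V$ by a finitely generated kernel) and on the right exactness of the subset colimit $C_N$.
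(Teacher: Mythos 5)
Your proof is correct, but it takes a genuinely different route from the paper. The paper first defines homology functors $H_a$ as the homology of a Koszul-type complex $\widetilde{\mathbb B}^{-\ast}(\mathscr V)$, identifies $H_0(\mathscr V)(S)$ and $H_1(\mathscr V)(S)$ as the cokernel and kernel of the comparison map $\underset{T\subsetneq S}{colim}\,\mathscr V(T)\longrightarrow \mathscr V(S)$ (Proposition \ref{P5.2}), proves that the canonical map $H_a(\mathscr V)\longrightarrow \mathbb S^1 H_a(\mathscr V)$ is null-homotopic (Proposition \ref{P5.3}), and then combines this with the torsion-subobject formula of Theorem \ref{P3.8cf} applied to the \emph{zero} torsion class to conclude that $H_a(\mathscr V)_n=0$ for $n\gg 0$ (Proposition \ref{P5.4}); a downward induction on $|S|$ then yields the theorem. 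Your argument avoids the functors $H_a$ and the torsion machinery entirely: you verify the statement directly for the projective generators ${}_m\mathscr M_r$ by noting that the transition maps in the subset diagram are split monomorphisms respecting the direct sum decomposition indexed by injections $[m]\hookrightarrow S$, and that for a fixed injection $g$ the sub-poset $\{T : \operatorname{im}(g)\subseteq T\subseteq S,\ |T|\leq N\}$ has $\operatorname{im}(g)$ as initial object once $N\geq m$; you then bootstrap to a general finitely generated $\mathscr V$ via a two-step projective presentation $Q\to P\to\mathscr V\to 0$ (using local noetherianness to cut the kernel down to finitely generated) and right exactness of the subset colimit functor $C_N(-)(S)$. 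Both arguments are sound. Your approach is more elementary and more self-contained — it uses only Section 2 — whereas the paper's approach is chosen to showcase the torsion theory it has developed, and the intermediate result on $H_a$ vanishing is reused later (Theorem \ref{P5.8} and Corollary \ref{Cor6.9}), so the longer route pays further dividends there.

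One small stylistic point: your injectivity argument on the projective generators (``the common underlying injection has image contained in $T_1\cap T_2$, so both representatives descend to $T_1\cap T_2$'') reads as if the diagram were filtered, which it is not once $|T_1|=|T_2|=N$. What actually makes the argument go through is the direct-sum decomposition and the observation that the relevant sub-poset for each fixed injection $g$ has an initial object $\operatorname{im}(g)$ (this is where $N\geq m$ is used), so the colimit of the constant $H_r$-diagram over it is $H_r$. You should make this explicit rather than appealing to a cofinality-style reduction that does not literally apply.
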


We conclude by proving a result similar to Theorem \ref{InTh5q} for shift finitely generated objects in $FI_{\mathcal R}$. For this, we apply the methods developed in previous
sections to the zero torsion class on $Mod-\mathcal R$. Our final result is as follows.

\begin{ithm}  (see \ref{P5.8} and \ref{Cor6.9}) Suppose that $Mod-\mathcal R$ is locally noetherian. Let $\mathscr V\in FI_{\mathcal R}$ be a shift finitely generated object. 

\smallskip
(a) Fix $a\geq 0$ and consider  any finitely generated subobject $\mathscr W\subseteq H_a(\mathscr V)$. Then,  there exists
$N\geq 0$ such that $\mathscr W_n=0$ for all $n\geq N$. 

\smallskip
(b)  Let $\mathscr V\in FI_{\mathcal R}$ be a shift finitely generated object such that
$H_0(\mathscr V)$ and $H_1(\mathscr  V)$ are finitely generated. Then, there exists $N\geq 0$ such that
\begin{equation}\label{intr5.17xet}
\underset{\tiny \begin{array}{c} T\subseteq S \\ |T|\leq N\\ \end{array}}{colim}\textrm{ }\mathscr V(T)=\mathscr V(S)
\end{equation} for each finite set $S$.

\end{ithm}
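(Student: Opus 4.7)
The plan is to instantiate the torsion-theoretic framework of Sections 3--5 at the trivially hereditary torsion class $\mathcal{T} = \{0\}$ on $Mod-\mathcal R$. With this choice, $\overline{\mathcal T}$ becomes the class of finitely generated $FI$-modules with $\mathscr V_n = 0$ for $n \gg 0$, and the class $\widetilde{\mathcal T}$ from \eqref{intreq5.1nw} becomes the class of $FI$-modules whose finitely generated subobjects are all eventually zero. By the previous theorem, $\widetilde{\mathcal T}$ is a hereditary torsion class, so part (a) amounts to showing $H_a(\mathscr V) \in \widetilde{\mathcal T}$ for every shift finitely generated $\mathscr V$ and every $a \geq 0$.

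For part (a), the first step is to observe that the shift functor $\mathbb S^d$ is exact and commutes, up to a canonical isomorphism, with the Koszul-type complex computing $H_a$, so that $\mathbb S^d H_a(\mathscr V) \cong H_a(\mathbb S^d\mathscr V)$. Since $\mathbb S^d\mathscr V$ is finitely generated and $FI_{\mathcal R}$ is locally noetherian, $H_a(\mathbb S^d\mathscr V)$ is finitely generated, hence $H_a(\mathscr V)$ is shift finitely generated. The torsion formula \eqref{intrtors317} specialized to $\mathcal T = \{0\}$ identifies the $\overline{\mathcal T}^{sfg}$-torsion subobject with the filtered union over $b$ of the kernels of the canonical maps $H_a(\mathscr V) \to \mathbb S^b H_a(\mathscr V) = H_a(\mathbb S^b\mathscr V)$; for finitely generated $\mathscr W \subseteq H_a(\mathscr V)$ it then suffices to find a uniform $b$ killing all the finitely many generators of $\mathscr W$, which I would produce by applying Theorem \ref{InTh5q} to the finitely generated $H_a(\mathbb S^b\mathscr V)$.

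For part (b), both $H_0(\mathscr V)$ and $H_1(\mathscr V)$ are themselves finitely generated, so part (a) applied to each (taking $\mathscr W = H_0(\mathscr V)$ and $H_1(\mathscr V)$) yields a single integer $N \geq 0$ such that $H_0(\mathscr V)_n = H_1(\mathscr V)_n = 0$ for all $n \geq N$. The vanishing of $H_0$ in degrees $\geq N$ gives a finitely generated surjection $M_0 \twoheadrightarrow \mathscr V$ from a free $FI$-module generated in degrees $\leq N$; the vanishing of $H_1$ analogously produces a finitely generated $M_1$ generated in degrees $\leq N$ surjecting onto $\ker(M_0 \to \mathscr V)$. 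Applying Theorem \ref{InTh5q} to $M_0$ and $M_1$, together with right-exactness of the colimit functor, then yields the colimit formula \eqref{intr5.17xet} for $\mathscr V$. The main obstacle I anticipate is in part (a): verifying the compatibility $\mathbb S^d H_a(\mathscr V) \cong H_a(\mathbb S^d\mathscr V)$ carefully, and then implementing the torsion-formula argument uniformly across the finitely many generators of $\mathscr W$; once these are in place, part (b) reduces to a bookkeeping exercise with finitely generated resolutions.
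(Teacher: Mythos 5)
Your overall framework (instantiating the torsion-theoretic machinery at the trivial hereditary torsion class $\mathcal T=\{0\}$, so that $\widetilde{\mathcal T}$ becomes the class of modules all of whose finitely generated subobjects are eventually zero) is exactly the paper's strategy. But the execution of part (a) has a genuine gap, and it lies precisely at the step you flag as the "main obstacle."

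The claimed compatibility $\mathbb S^d H_a(\mathscr V)\cong H_a(\mathbb S^d\mathscr V)$ is false. Already at the level of the chain groups, $\mathbb S^e\mathbb B^{-a}(\mathscr V)(T)=\bigoplus_{\phi:[a]\hookrightarrow T\sqcup[-e]}\mathscr V(T\sqcup[-e]-\phi[a])$, and the summands where $\phi$ meets $[-e]$ have no counterpart in $\mathbb B^{-a}(\mathbb S^e\mathscr V)(T)=\bigoplus_{\phi:[a]\hookrightarrow T}\mathscr V((T-\phi[a])\sqcup[-e])$. The paper's Lemma \ref{L5.6} makes this explicit: $\mathbb S^e\mathbb B^{-a}(\mathscr V)$ decomposes as $\bigoplus_{j=0}^a\bigl(\mathbb B^{-(a-j)}(\mathbb S^{e-j}\mathscr V)\bigr)^{\oplus\cdot}$, of which $\mathbb B^{-a}(\mathbb S^e\mathscr V)$ is only the $j=0$ piece. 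Establishing that $H_a(\mathscr V)$ is shift finitely generated therefore requires this more careful decomposition (Lemma \ref{L5.6}) combined with the fact that $FI^{sfg}_{\mathcal R}$ is a Serre subcategory (Proposition \ref{P3.11}); the isomorphism you posit does not exist. A cleaner sanity check that it cannot hold: by Proposition \ref{P5.3}, the canonical map $H_a(\mathscr V)\to\mathbb S^1 H_a(\mathscr V)$ is always zero, whereas the map $H_a(\mathscr V)\to H_a(\mathbb S^1\mathscr V)$ induced by $\mathscr V\to\mathbb S^1\mathscr V$ certainly need not be.

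More seriously, the final step of your part (a) does not close. Applying Theorem \ref{InTh5q} to a finitely generated module yields the colimit presentation \eqref{intr5.17xe}, which is a statement about how the module is built from low degrees; it does not show that a given finitely generated subobject dies under $\psi^{H_a(\mathscr V)}_b$ for large $b$, which is what the torsion formula \eqref{intrtors317} demands. The missing key lemma is the paper's Proposition \ref{P5.3}: the canonical morphism $H_a(\mathscr V)\longrightarrow\mathbb S^1 H_a(\mathscr V)$ is already \emph{zero} for every $\mathscr V$, proved via an explicit chain homotopy on the complex $\widetilde{\mathbb B}^{-\ast}(\mathscr V)$ (analogous to \cite[Proposition 2.25]{Four1}). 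Once one knows this, specialization of \eqref{intrtors317} to $\mathcal T=0$ gives $\overline{\mathcal T}^{sfg}_0(H_a(\mathscr V))=H_a(\mathscr V)$ immediately, with no search for a uniform $b$ at all; the vanishing is built in from $b=1$. Without this homotopy argument your proof of (a) does not go through.

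Part (b) is essentially fine. Applying (a) to $H_0(\mathscr V)$ and $H_1(\mathscr V)$ to obtain a common $N$ with $H_0(\mathscr V)_n=H_1(\mathscr V)_n=0$ for $n\geq N$ is exactly what the paper does. After that, the paper runs an induction on $|S|$ using the identification in Proposition \ref{P5.2} of $H_0(\mathscr V)(S)$ and $H_1(\mathscr V)(S)$ as the cokernel and kernel of $\underset{T\subsetneq S}{colim}\,\mathscr V(T)\to\mathscr V(S)$, which is quite direct. Your alternative route via a two-step finitely generated presentation $M_1\to M_0\to\mathscr V\to 0$ in degrees $\leq N$, Theorem \ref{InTh5q} applied to $M_0,M_1$, and right-exactness of the colimit functor is also workable (and note that once $H_0(\mathscr V)$ is finitely generated, $\mathscr V$ itself is finitely generated by Proposition \ref{P2.6}, so the $M_i$ can indeed be taken finitely generated), but you would still need to verify carefully that vanishing of $H_1$ in degrees $\geq N$ yields a relation module $M_1$ generated in degrees $\leq N$ — a fact of the same flavor as Proposition \ref{P2.7} but for the syzygy, not entirely free. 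The inductive argument in the paper avoids introducing syzygies altogether.
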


\section{Finitely generated $FI$-modules over rings with several objects}

\smallskip

Let $FI$ denote the category consisting of finite sets and injections. For each $n> 0$, we set $[n]:=\{1,2,...,n\}$   while $[0]$ is taken to be the empty set. 
Then, $FI$ is equivalent to its full subcategory consisting of the objects $[n]$ for $n\geq 0$. In particular, while $FI$ is not a small category, we see that it is essentially small, i.e., 
equivalent to a small category. 

\smallskip
We now let $\mathcal R$ be a small preadditive category, viewed as a ring with several objects. Then, a (right) $\mathcal R$-module is a functor $\mathcal R^{op}\longrightarrow
Ab$, where $Ab$ is the category of abelian groups. The category of right $\mathcal R$-modules will be denoted by $Mod-\mathcal R$. For each object $r\in \mathcal R$, we
set $H_r:=\mathcal R(\_\_,r):\mathcal R^{op}\longrightarrow
Ab$. 

\smallskip
It is well known (see, for instance, \cite[$\S$ 1.4]{Gar}) that $Mod-\mathcal R$ is a locally finitely presented Grothendieck category, with the collection $\{H_r\}_{r\in \mathcal R}$ being a family of finitely generated
projective generators. We notice that
since $Mod-\mathcal R$ is a Grothendieck category, it is well-powered (see, for instance, \cite[Proposition IV.6.6]{Bo}), i.e., the collection of (equivalence classes of) subobjects of any $V\in Mod-\mathcal R$ is a set. 

\begin{defn}\label{D2.1} Let $\mathcal R$ be a small preadditive category. An $FI$-module over $\mathcal R$ is a functor from $FI$ to $Mod-\mathcal R$. For any such $\mathscr V:
FI\longrightarrow Mod-\mathcal R$, we set $\mathscr V_n:=\mathscr V(n)=\mathscr V([n])$ for each $n\geq 0$. For any morphism $\phi: S\longrightarrow T$ in $FI$, we denote by $\phi_*:\mathscr V(S)\longrightarrow \mathscr V(T)$ the induced morphism $\mathscr V(\phi)$. 

\smallskip
The category of $FI$-modules over $\mathcal R$ will be denoted by $FI_{\mathcal R}$. 

\end{defn}

 Since $FI$ is  essentially small, it follows from \cite[Theorem 14.2]{Faith} that the category of $FI$-modules over $\mathcal R$ is a Grothendieck category. In particular, for any morphism
$\tilde f:\mathscr V\longrightarrow \mathscr V'$ in $FI_{\mathcal R}$, we have
\begin{equation}
Ker(\tilde f)(S)=Ker(\tilde f(S):\mathscr V(S)\longrightarrow\mathscr V'(S))\qquad Coker(\tilde f)(S)=Coker(\tilde f(S):\mathscr V(S)\longrightarrow\mathscr V'(S))
\end{equation} for any $S\in FI$. In this paper, unless otherwise mentioned, by an $FI$-module, we will always mean an $FI$-module over
$\mathcal R$. For any $\mathscr V\in FI_{\mathcal R}$, we set
\begin{equation}
el(\mathscr V):=\underset{d\geq 0}{\coprod}\textrm{ } \underset{r\in \mathcal R }{\coprod}\mathscr V(d)(r)
\end{equation}

\begin{defn}\label{D2.2}
Fix $d\geq 0$. An $FI$-module $\mathscr V$ is said to be  generated in degree $\leq d$  if there exists a  (not necessarily finite) collection $\{f_i\vert i\in I  \}\subseteq \underset{e\leq d}{\coprod}\textrm{ } \underset{r\in \mathcal R }{\coprod}\mathscr V(e)(r)$ having the property
that any subobject 
 $\mathscr V'\subseteq \mathscr V$ such that $\{f_i\vert i\in I  \}\subseteq el(\mathscr V')$ must satisfy $\mathscr V'=\mathscr V$.

\end{defn}

\smallskip
Given  finite sets $S$, $T\in FI$, we will denote by $(S,T)$ the set of injections $S\hookrightarrow T$, i.e., the morphisms from $S$ to $T$ in the category $FI$. 
For any $r\in\mathcal R$ and $d\geq 0$, we now define $_d\mathscr M_r\in FI_{\mathcal R}$ as follows:
\begin{equation}\label{mdr}
_d\mathscr M_r:FI\longrightarrow Mod-\mathcal R\qquad S\mapsto H_r^{([d],S)}
\end{equation} where $H_r^{([d],S)}$ denotes the direct sum of copies of $H_r$ indexed by the set $([d],S)$. 

\begin{lem}\label{L2.3} Let $\mathscr V\in FI_{\mathcal R}$.
For any $d\geq 0$ and any $r\in \mathcal R$, we have a canonical isomorphism 
\begin{equation}
{FI_{\mathcal R}}(_d\mathscr M_r,\mathscr V)\cong \mathscr V(d)(r)
\end{equation} of abelian groups.
\end{lem}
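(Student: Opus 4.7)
The plan is to prove this as a two-step Yoneda-type computation: first decompose using the universal property of direct sums, then apply Yoneda in $Mod$-$\mathcal R$, and finally use naturality in the $FI$-variable to pin everything down to a single element of $\mathscr V(d)(r)$.

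First, I would fix $\mathscr V \in FI_{\mathcal R}$ and analyse what a morphism $\tilde f : {_d\mathscr M_r} \to \mathscr V$ of $FI$-modules actually consists of. By definition, $\tilde f$ is a collection of $\mathcal R$-module maps $\tilde f(S) : H_r^{([d],S)} \to \mathscr V(S)$, one for each finite set $S$, compatible with morphisms in $FI$. Since $H_r^{([d],S)}$ is a direct sum indexed by $([d],S)$, the universal property of coproducts lets me decompose $\tilde f(S)$ into a family $\{\tilde f(S)_\phi : H_r \to \mathscr V(S)\}_{\phi \in ([d],S)}$; applying the Yoneda lemma in $Mod$-$\mathcal R$ (since $H_r = \mathcal R(-,r)$ represents evaluation at $r$), each component corresponds to a unique element $v_{S,\phi} \in \mathscr V(S)(r)$.

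Next I would extract the naturality constraint. For any injection $\psi : S \to T$ in $FI$, the induced map $\psi_* : {_d\mathscr M_r}(S) \to {_d\mathscr M_r}(T)$ sends the summand indexed by $\phi \in ([d],S)$ isomorphically onto the summand indexed by $\psi \circ \phi \in ([d],T)$. Commutativity of the square involving $\tilde f(S)$ and $\tilde f(T)$ therefore forces $v_{T,\,\psi\circ\phi} = \psi_*(v_{S,\phi})$ for every $\phi \in ([d],S)$. Specialising to $S = [d]$ and $\phi = \mathrm{id}_{[d]}$, and letting $\psi$ range over arbitrary injections $[d] \hookrightarrow T$, I see that the single element $v := v_{[d],\,\mathrm{id}} \in \mathscr V(d)(r)$ completely determines all other components via $v_{T,\psi} = \psi_*(v)$.

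This gives the map $\Phi : FI_{\mathcal R}({_d\mathscr M_r},\mathscr V) \to \mathscr V(d)(r)$, $\tilde f \mapsto \tilde f([d])_{\mathrm{id}}$, and it is clearly a homomorphism of abelian groups. To build its inverse, given $v \in \mathscr V(d)(r)$ I would define $\tilde f^v(S)$ on the $\phi$-summand to be the $\mathcal R$-module map $H_r \to \mathscr V(S)$ corresponding under Yoneda to $\phi_*(v)$. The verification that $\tilde f^v$ is natural in $S$ is a direct check from the definition of the functor ${_d\mathscr M_r}$, and the two assignments $v \mapsto \tilde f^v$ and $\tilde f \mapsto \Phi(\tilde f)$ are mutually inverse by construction. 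I do not anticipate a genuine obstacle here; the only thing that requires care is keeping the bookkeeping of the double index $(S,\phi)$ straight and confirming that the naturality square indeed translates into the $FI$-module axiom $v_{T,\psi\circ\phi} = \psi_*(v_{S,\phi})$.
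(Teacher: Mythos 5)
Your proposal is correct and follows essentially the same route as the paper: apply Yoneda in $Mod$-$\mathcal R$ together with the universal property of the direct sum $H_r^{([d],S)}$, send $\tilde f$ to its component at $S=[d]$, $\phi=\mathrm{id}_{[d]}$, and reconstruct the full morphism by pushing forward along the maps $\mathscr V(\phi)$. The only difference is cosmetic: you spell out the naturality constraint $v_{T,\psi\circ\phi}=\psi_*(v_{S,\phi})$ explicitly, whereas the paper leaves the verification that the two assignments are inverse to the reader.
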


\begin{proof}
 By Yoneda Lemma, an element $f\in \mathscr V(d)(r)$ corresponds to a morphism $f:H_r\longrightarrow \mathscr V(d)$ in $Mod-\mathcal R$. For any finite set $S$, we can take a direct sum of copies of $f$ to obtain a morphism $f^{([d],S)}: H_r^{([d],S)}\longrightarrow \mathscr V(d)^{([d],S)}$ in $Mod-\mathcal R$. Since $\mathscr V$ is a covariant functor from
 $FI$ to $Mod-\mathcal R$,  each morphism $\phi\in ([d],S)$ induces a morphism $\mathscr V(\phi):\mathscr V(d)\longrightarrow \mathscr V(S)$. Together, these 
 determine a morphism $\mathscr V(d)^{([d],S)}\longrightarrow \mathscr V(S)$ from the  direct sum $\mathscr V(d)^{([d],S)}$. Composing with $f^{([d],S)}: H_r^{([d],S)}\longrightarrow \mathscr V(d)^{([d],S)}$, we obtain
 $\tilde{f}(S): {_d\mathscr M_r}(S)=H_r^{([d],S)}\longrightarrow \mathscr V(S)$ in $Mod-\mathcal R$. Since these morphisms are functorial with respect to $S\in FI$, the element $f\in \mathscr V(d)(r)$  
 determines a morphism $\tilde{f}:{_d\mathscr M_r}\longrightarrow \mathscr V$ in $FI_{\mathcal R}$. 
 
 \smallskip
 Conversely, suppose that we are given a morphism $\tilde{f}: {_d\mathscr M_r}\longrightarrow \mathscr V$ in $FI_{\mathcal R}$. In particular, this gives us a morphism
 $\tilde{f}(d):{_d\mathscr M_r}(d)=H_r^{([d],[d])}\longrightarrow \mathscr V(d)$ in $Mod-\mathcal R$. Considering the identity morphism $1_d\in ([d],[d])$ gives us an inclusion
 $H_r\longrightarrow H_r^{([d],[d])}$ which when composed with $\tilde{f}(d)$ gives a morphism $f:H_r\longrightarrow \mathscr V(d)$ in $Mod-\mathcal R$, i.e., an element $f\in \mathscr V(d)(r)$. It may be easily verified that these two associations are inverse to each other, which proves the result. 
\end{proof}

\begin{thm}\label{L2.4}
(a) For $d\geq 0$ and $r\in \mathcal R$, the object $_d\mathscr M_r$ is a finitely generated object of $FI_{\mathcal R}$.

\smallskip
(b) The collection $\{_d\mathscr M_r\}_{r\in \mathcal R,d\geq 0}$ is a set of generators for the Grothendieck category $FI_{\mathcal R}$. 

\smallskip
(c) An object $\mathscr V$
in $FI_{\mathcal R}$ is finitely generated if and only if there is an epimorphism
\begin{equation}
\underset{i\in I}{\bigoplus}\textrm{ } {_{d_i}\mathscr M_{r_i}}\longrightarrow \mathscr V
\end{equation} for some finite collection $\{(d_i,r_i)\}_{i\in I}$ with each $d_i\geq 0$ and $r_i\in\mathcal R$. 

\smallskip
(d) An object $\mathscr V$
in $FI_{\mathcal R}$ is finitely generated if and only if there is a finite collection $\{f_1,...,f_k\}\subseteq el(\mathscr V)$
such that any subobject $\mathscr V'\hookrightarrow \mathscr V$ with $\{f_1,...,f_k\}\subseteq el(\mathscr V')$ must satisfy $\mathscr V'=\mathscr V$. 

\smallskip
(e)  An object $\mathscr V$ in $FI_{\mathcal R}$ is generated in degree $\leq d$ if and only if there is an epimorphism
\begin{equation}
\underset{i\in I}{\bigoplus}\textrm{ } {_{d_i}\mathscr M_{r_i}}\longrightarrow \mathscr V
\end{equation} for some  collection $\{(d_i,r_i)\}_{i\in I}$ with each $0\leq d_i\leq d$ and $r_i\in\mathcal R$. 
\end{thm}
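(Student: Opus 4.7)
The plan is to leverage Lemma \ref{L2.3} throughout, treating $\{_d\mathscr M_r\}$ as playing the role of free modules and extracting each generation statement from the Yoneda-type dictionary it provides. For (a), let $\iota_{d,r}\in {_d\mathscr M_r}(d)(r)$ denote the element corresponding under Lemma \ref{L2.3} to $1_{{_d\mathscr M_r}}$; explicitly, $\iota_{d,r}$ is the generator of $H_r^{([d],[d])}(r)$ attached to the index $1_{[d]}$. If $\mathscr V'\subseteq {_d\mathscr M_r}$ is a subobject with $\iota_{d,r}\in el(\mathscr V')$, then a lift $\iota'\in\mathscr V'(d)(r)$ yields, via Lemma \ref{L2.3}, a morphism $\tilde\iota:{_d\mathscr M_r}\to\mathscr V'$; composing with the inclusion $\mathscr V'\hookrightarrow {_d\mathscr M_r}$ produces an endomorphism whose Yoneda datum is still $\iota_{d,r}$, hence equal to $1_{{_d\mathscr M_r}}$. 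The mono $\mathscr V'\hookrightarrow {_d\mathscr M_r}$ is therefore a split epimorphism, so an isomorphism. Thus $\{\iota_{d,r}\}$ is a one-element generating family, verifying (a) and a fortiori that $_d\mathscr M_r$ is generated in degree $\leq d$.

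For (b), assemble the morphisms of Lemma \ref{L2.3} into
\[
\Theta_{\mathscr V}:\underset{d\geq 0,\,r\in\mathcal R,\,f\in \mathscr V(d)(r)}{\bigoplus}\,{_d\mathscr M_r}\longrightarrow \mathscr V.
\]
Inspecting the proof of Lemma \ref{L2.3}, the summand indexed by $(d,r,f)$ sends the $\phi$-component of $H_r^{([d],S)}$ onto $\phi_*(f)\in\mathscr V(S)(r)$ for every $\phi\in ([d],S)$. Taking $d=|S|$ and $\phi:[d]\xrightarrow{\sim}S$ a bijection in $FI$, the iso $\phi_*:\mathscr V(d)\to\mathscr V(S)$ shows that the image of $\Theta_{\mathscr V}(S)$ contains $\phi_*(\mathscr V(d)(r))=\mathscr V(S)(r)$ for every $r\in\mathcal R$. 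Since $\{H_r\}_{r\in\mathcal R}$ generates $Mod-\mathcal R$, this forces $\Theta_{\mathscr V}(S)$ to be an epimorphism at each $S$, so $\Theta_{\mathscr V}$ is an epimorphism in $FI_{\mathcal R}$. Together with (a), this exhibits $\{_d\mathscr M_r\}$ as a set of finitely generated generators, proving (b).

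Parts (c), (d) and (e) are then formal consequences of the same dictionary. Given any family $\{f_i\in\mathscr V(d_i)(r_i)\}_{i\in I}$, Lemma \ref{L2.3} produces a map $\Phi:\bigoplus_i {_{d_i}\mathscr M_{r_i}}\to\mathscr V$; repeating the split-mono argument of (a) shows that a subobject $\mathscr V'\subseteq\mathscr V$ contains every $f_i$ in $el(\mathscr V')$ precisely when $\Phi$ factors through $\mathscr V'$, so $\Phi$ is an epimorphism if and only if the only subobject of $\mathscr V$ containing $\{f_i\}$ is $\mathscr V$ itself. Specializing to finite $I$ yields the equivalence (c)$\Leftrightarrow$(d), and specializing to $d_i\leq d$ against Definition \ref{D2.2} yields (e). The only step I expect to require genuine care is the forward direction of (c): the epimorphism $\Theta_{\mathscr V}$ of (b) writes any finitely generated $\mathscr V$ as a possibly infinite sum of images of summands, and one must extract a finite subsum that still surjects onto $\mathscr V$. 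This is precisely where one invokes the standard Grothendieck-category fact that, in the presence of a set of finitely generated generators (now available by (a) and (b)), the categorically finitely generated objects are exactly the quotients of finite direct sums of generators; this is the one ingredient beyond Lemma \ref{L2.3} that the argument genuinely uses.
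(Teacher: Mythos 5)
Your split-mono argument in (a) correctly establishes that any subobject $\mathscr V'\subseteq{_d\mathscr M_r}$ with $\iota_{d,r}\in el(\mathscr V')$ must equal $_d\mathscr M_r$; that is, the singleton $\{\iota_{d,r}\}$ is a generating family in the sense of Definition~\ref{D2.2}. But this is not what (a) asserts. Part (a) claims that $_d\mathscr M_r$ is \emph{finitely generated} as an object of the Grothendieck category $FI_{\mathcal R}$, i.e.\ that $FI_{\mathcal R}({_d\mathscr M_r},\,\_\,)$ preserves filtered colimits of monomorphisms. Passing from ``has a finite element-generating family'' to ``is categorically finitely generated'' is essentially one direction of part (d), and the Grothendieck-category ingredient you invoke for the forward direction of (c) (finitely generated objects are quotients of finite sums of generators) already presupposes that the generators $_d\mathscr M_r$ are finitely generated, so as written your argument for (a) is circular or at best incomplete. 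What is missing is exactly the filtered-colimit step the paper supplies: given a filtered system of monomorphisms $\{\mathscr W_j\}$ with colimit $\mathscr W$, a morphism $\tilde f:{_d\mathscr M_r}\to\mathscr W$ corresponds under Lemma~\ref{L2.3} to an element $f\in\mathscr W(d)(r)=\varinjlim_j\mathscr W_j(d)(r)$, and since filtered colimits in $FI_{\mathcal R}$ are computed pointwise, $f$ already lies in some $\mathscr W_{j_0}(d)(r)$, whence $\tilde f$ factors through $\mathscr W_{j_0}$. Your argument never touches this and so only proves the Definition~\ref{D2.2}-style statement, not (a).

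The remainder of the proposal is sound. Your proof of (b) takes a genuinely different and more constructive route than the paper's: you build the explicit epimorphism $\Theta_{\mathscr V}$ and verify surjectivity degree by degree using a bijection $\phi:[|S|]\to S$ together with the fact that $\{H_r\}_{r\in\mathcal R}$ generates $Mod-\mathcal R$, whereas the paper shows that every proper subobject $\mathscr V'\subsetneq\mathscr V$ misses some morphism $_d\mathscr M_r\to\mathscr V$ and cites Grothendieck's criterion for a set of generators. The two characterizations of ``generator'' are equivalent, and both arguments are correct. Once (a) is repaired along the lines above, your reductions of (c), (d), (e) to Lemma~\ref{L2.3} plus the standard characterization of finitely generated objects in a Grothendieck category with finitely generated generators go through as intended.
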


\begin{proof} We consider a filtered system $\{\mathscr W_j\}_{j\in J}$ in $FI_{\mathcal R}$ connected by monomorphisms and set $\mathscr W:=\underset{j\in J}{\varinjlim}
\textrm{ }\mathscr W_j$. By Lemma \ref{L2.3}, any morphism $\tilde f:{_d\mathscr M_r}\longrightarrow \mathscr W$ corresponds to an element $f\in \mathscr W(d)(r)$. Since
$\mathscr W(d)(r)=\underset{j\in J}{\varinjlim}
\textrm{ }\mathscr W_j(d)(r)$, we see that $\tilde f$ must factor through $\mathscr W_{j_0}$ for some $j_0\in J$. This proves (a). 

\smallskip
We have noted before that $FI_{\mathcal R}$ is a Grothendieck category. We consider some $\mathscr V\in FI_{\mathcal R}$ and some proper subobject $\mathscr V'\subsetneq \mathscr V$. Since the full subcategory of objects
$[n]$, $n\geq 0$ forms a skeleton of $FI$, we must have some $d\geq 0$ such that $\mathscr V'(d)\subsetneq \mathscr V(d)$ and therefore some $r\in \mathcal R$
such that $\mathscr V'(d)(r)\subsetneq \mathscr V(d)(r)$. Since ${FI_{\mathcal R}}(_d\mathscr M_r,\mathscr V)\cong \mathscr V(d)(r)$ by Lemma \ref{L2.3},
it follows that there exists a morphism $_d\mathscr M_r \longrightarrow \mathscr V$  in $FI_{\mathcal R}$ which does not factor through $\mathscr V'$. It follows from 
\cite[$\S$ 1.9]{Tohoku} that $\{_d\mathscr M_r\}_{r\in \mathcal R,d\geq 0}$ is a set of generators for $FI_{\mathcal R}$. This proves (b). The ``only if part'' of (c)  is clear from 
\cite[Proposition 1.9.1]{Tohoku}. The ``if part'' follows from the fact that a quotient of a finitely generated object is always finitely generated. Parts (d) and (e) also
follow easily by using Lemma \ref{L2.3}. 

\end{proof}

Similar to \cite[Definition 2.4]{Four1}, we now consider the following functor: for $\mathscr V\in FI_{\mathcal R}$, we define $H_0(\mathscr V):FI\longrightarrow Mod-\mathcal R$
by setting
\begin{equation}\label{h0}
\begin{array}{ll}
H_0(\mathscr V)(S)&:= Coker\left(\bigoplus \mathscr V(\phi): \underset{
\mbox{\tiny $\begin{array}{c}\phi:T\hookrightarrow S \\  |T|<|S| \\
\end{array}$}}{\bigoplus}
\mathscr V(T)\longrightarrow\mathscr V(S)\right) \\ &\textrm{ }= \mathscr V(S)/\left(\underset{
\mbox{\tiny $\begin{array}{c}\phi:T\hookrightarrow S \\  |T|<|S| \\
\end{array}$}}{\sum}Im(\mathscr V(\phi):\mathscr V(T)\longrightarrow \mathscr V(S))\right)\\
\end{array}
\end{equation}
From \eqref{h0}, it is clear that there is a canonical epimorphism $\mathscr V\longrightarrow H_0(\mathscr V)$ and that for any $\phi:T\longrightarrow S$ in $FI$
with $|T|<|S|$, we have $H_0(\mathscr V)(\phi)=0$. It follows that the functor $H_0$ is idempotent, i.e., $H_0^2=H_0$. 

\begin{lem}\label{Lem2.5} (a) The functor $H_0:FI_{\mathcal R}\longrightarrow FI_{\mathcal R}$ preserves colimits. 

\smallskip
(b) For any $\mathscr V\in FI_{\mathcal R}$, we have $\mathscr V=0$ if and only if $H_0(\mathscr V)=0$.

\smallskip
(c) A morphism $\tilde f$ in $FI_{\mathcal R}$ is an epimorphism if and only if $H_0(\tilde f)$
is an epimorphism. 

\end{lem}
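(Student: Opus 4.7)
For part (a), my plan is to view $H_0$ as the objectwise cokernel of a natural transformation between two colimit-preserving endofunctors of $FI_{\mathcal R}$. Since $FI_{\mathcal R}$ is a Grothendieck functor category, colimits in it are computed objectwise in $Mod-\mathcal R$; in particular, for each finite set $S$, the evaluation $\mathscr V\mapsto \mathscr V(S)$ preserves colimits, and hence so does $\mathscr V\mapsto \bigoplus_{\phi:T\hookrightarrow S,\,|T|<|S|}\mathscr V(T)$. The functor $H_0$ is the cokernel of the natural transformation from the latter to the former, and since cokernels (being colimits) commute with colimits in $Mod-\mathcal R$, it follows that $H_0(-)(S)$ preserves colimits for each $S$, hence $H_0$ itself preserves colimits. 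Alternatively, the canonical epimorphism $\mathscr V\to H_0(\mathscr V)$ together with idempotency $H_0^2=H_0$ exhibits $H_0$ as a reflection onto the full subcategory of $FI$-modules $\mathscr W$ with $\mathscr W(\phi)=0$ for every non-isomorphism $\phi$; as a left adjoint it automatically preserves colimits.

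For part (b), one direction is trivial. For the converse, I would induct on $|S|$. In the base case $S=\emptyset$, the indexing set $\{\phi:T\hookrightarrow\emptyset\mid|T|<0\}$ is empty, so the direct sum in \eqref{h0} vanishes and $H_0(\mathscr V)(\emptyset)=\mathscr V(\emptyset)$; the hypothesis $H_0(\mathscr V)=0$ then forces $\mathscr V(\emptyset)=0$. For the inductive step, if $\mathscr V(T)=0$ whenever $|T|<n$, then for $|S|=n$ the same direct sum vanishes, giving $H_0(\mathscr V)(S)=\mathscr V(S)=0$. Since $\{[n]\}_{n\geq 0}$ is a skeleton of $FI$, this yields $\mathscr V=0$.

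For part (c), the forward implication follows from (a): since $H_0$ preserves colimits it preserves cokernels, so if $\tilde f$ is an epimorphism in $FI_{\mathcal R}$, i.e.\ $Coker(\tilde f)=0$, then $Coker(H_0(\tilde f))=H_0(Coker(\tilde f))=H_0(0)=0$, and $H_0(\tilde f)$ is epi. Conversely, if $H_0(\tilde f)$ is epi, set $\mathscr C:=Coker(\tilde f)$; by (a) we get $H_0(\mathscr C)=Coker(H_0(\tilde f))=0$, and then (b) forces $\mathscr C=0$, so $\tilde f$ is epi. There is no serious obstacle here; the only step needing any care is the empty base case $S=\emptyset$ in (b), where one must correctly interpret the empty indexing direct sum as zero. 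Everything else is a formal consequence of the colimit-preservation established in (a), together with the basic characterization of epimorphisms in an abelian category via vanishing of the cokernel.
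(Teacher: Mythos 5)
Your argument matches the paper's proof in all essentials: part (a) follows because $H_0$ is built from objectwise direct sums and cokernels (the paper says exactly this; your reflection/adjunction observation is a nice reformulation but not a different route), part (b) is the paper's minimal-nonzero-degree contradiction recast as an induction, and part (c) is the same two-line deduction from (a) and (b). Correct, and essentially the paper's approach.
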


\begin{proof}
Part (a) follows from the fact that $H_0$ is defined in \eqref{h0} using cokernels. For (b), suppose we have $\mathscr V\ne 0$ in $FI_{\mathcal R}$ such that $H_0(\mathscr V)=0$. Let
$n$ be the smallest integer $\geq 0$ such that $\mathscr V(n)\ne 0$. Then, for each finite set $T$ with $|T|<n$, we have $\mathscr V(T)=0$ and it follows from 
\eqref{h0} that $H_0(\mathscr V)(n)=\mathscr V(n)$. This yields $\mathscr V(n)=0$, which is a contradiction. This proves (b). Part (c) follows by using (a) and applying the result
of (b) to the cokernel of $\tilde f$. 
\end{proof}

We now consider a functor
\begin{equation}\label{gr}
Gr: FI_{\mathcal R}\longrightarrow Mod-\mathcal R\qquad \mathscr V\mapsto \underset{n\geq 0}{\bigoplus}\mathscr V(n)
\end{equation} It is clear that $Gr(\mathscr V)=0$ if and only if $\mathscr V=0$. We also note that $Gr$ preserves cokernels, kernels and coproducts. 

\begin{thm}\label{P2.6} Let $\mathscr V\in FI_{\mathcal R}$. Then, the following are equivalent:

\smallskip
(a) $\mathscr V$ is finitely generated in $FI_{\mathcal R}$.

\smallskip
(b) $H_0(\mathscr V)$ is finitely generated in $FI_{\mathcal R}$.

\smallskip
(c) $Gr(H_0(\mathscr V))=\underset{n\geq 0}{\bigoplus}H_0(\mathscr V)_n$ is finitely generated in $Mod-\mathcal R$. 

\end{thm}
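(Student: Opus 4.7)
The plan is to prove $(a)\Rightarrow (b)$, then $(b)\Rightarrow (a)$ using Lemma \ref{Lem2.5}(c), and finally $(b)\Leftrightarrow (c)$ by exploiting the special structure of $H_0(\mathscr V)$ (namely, that $H_0(\mathscr V)(\phi)=0$ whenever $\phi$ is a strict injection).

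For $(a)\Rightarrow (b)$, there is nothing to do: the canonical morphism $\mathscr V\to H_0(\mathscr V)$ is an epimorphism by construction, and since a quotient of a finitely generated object in a Grothendieck category is finitely generated, $(b)$ follows. For the converse $(b)\Rightarrow (a)$, if $H_0(\mathscr V)$ is finitely generated then by Proposition \ref{L2.4}(c) there is an epimorphism $\bigoplus_{i=1}^k {_{d_i}\mathscr M_{r_i}}\twoheadrightarrow H_0(\mathscr V)$. Via Lemma \ref{L2.3} this amounts to $k$ elements of the abelian groups $H_0(\mathscr V)(d_i)(r_i)$, which lift (pointwise, since $FI_{\mathcal R}$ epimorphisms are pointwise and $Mod$-$\mathcal R$ epimorphisms are pointwise) to elements of $\mathscr V(d_i)(r_i)$ and hence, by Yoneda again, to a morphism $\tilde f:\bigoplus_i {_{d_i}\mathscr M_{r_i}}\to\mathscr V$. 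Applying $H_0$ and using idempotency $H_0\circ H_0=H_0$, I would observe that $H_0(\tilde f)$ coincides with the epimorphism I started from. Lemma \ref{Lem2.5}(c) then forces $\tilde f$ itself to be an epimorphism, so $\mathscr V$ is finitely generated.

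For $(b)\Leftrightarrow(c)$, set $\mathscr W:=H_0(\mathscr V)$ and use crucially that $\mathscr W(\phi)=0$ for every $\phi:T\hookrightarrow S$ with $|T|<|S|$. The plan is to show that a sub-$FI_{\mathcal R}$-object $\mathscr W'\subseteq\mathscr W$ is the same data as a family $\{W'_n\subseteq W_n\}$ of $\mathcal R$-submodules, each stable under the $S_n$-action coming from bijections in $FI$; this is because the only nontrivial structure maps of $\mathscr W$ are automorphisms. Under this identification, $\bigoplus_n W'_n$ is an $\mathcal R$-submodule of $Gr(\mathscr W)=\bigoplus_n W_n$, and conversely every $\mathcal R$-submodule of $Gr(\mathscr W)$ decomposes as such a direct sum because the grading is compatible with the $\mathcal R$-action. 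For $(b)\Rightarrow(c)$, given a finite generating family $\{f_i\in \mathscr W(n_i)(r_i)\}_{i=1}^k$ of $\mathscr W$, consider the finite set $\bigcup_i \{\sigma_*f_i : \sigma\in S_{n_i}\}\subseteq Gr(\mathscr W)$ and the $\mathcal R$-submodule $N$ it generates. By the identification above, $N$ corresponds to a subobject of $\mathscr W$ (it is automatically $S_n$-stable by construction) and contains each $f_i$, so equals $\mathscr W$ by finite generation, hence $N=Gr(\mathscr W)$. For $(c)\Rightarrow(b)$, take a finite $\mathcal R$-generating set of $Gr(\mathscr W)$, decompose each generator into homogeneous components (which keeps the set finite), and verify that any sub-$FI_{\mathcal R}$-object containing these must equal $\mathscr W$, again via the $\mathcal R$-submodule description of subobjects.

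The routine content is $(a)\Leftrightarrow(b)$, which is a formal consequence of the already-recorded properties of $H_0$. The main obstacle I expect is being careful in $(b)\Leftrightarrow(c)$ about the description of subobjects of $H_0(\mathscr V)$: one must verify that a family of $\mathcal R$-submodules $W'_n\subseteq W_n$ which is $S_n$-stable for each $n$ really does assemble into a sub-functor of $\mathscr W$, and that this assignment is an order-preserving bijection between subobjects of $\mathscr W$ in $FI_{\mathcal R}$ and $\mathcal R$-submodules of $Gr(\mathscr W)$ of the form $\bigoplus_n W'_n$ with $W'_n$ each $S_n$-stable. Once this correspondence is clean, the two finite-generation conditions match up essentially by averaging/taking $S_n$-orbits (harmless because $S_n$ is finite for every $n$), and the equivalence drops out.
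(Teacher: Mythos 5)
Your proof is correct, and while parts (a)$\Leftrightarrow$(b) proceed by essentially the same mechanism as the paper (ultimately reducing to Lemma \ref{Lem2.5}(c)), your treatment of (b)$\Leftrightarrow$(c) is a genuinely different route. The paper reduces via Proposition \ref{L2.4} to the special case $\mathscr W = H_0({_d\mathscr M_r})$ and then just checks directly that $H_0({_d\mathscr M_r})_n = 0$ for $n > d$, so that $Gr(H_0({_d\mathscr M_r}))$ has only finitely many nonzero summands; finite generation of $Gr$ then falls out with essentially no work. You instead describe the lattice of subobjects of $\mathscr W=H_0(\mathscr V)$ intrinsically: since $\mathscr W(\phi)=0$ for strict injections, a subobject is exactly a family $\{W_n'\subseteq W_n\}$ of $S_n$-stable $\mathcal R$-submodules, and the matching up with $\mathcal R$-submodules of $Gr(\mathscr W)$ uses the finiteness of each $S_n$ (passing to orbits in one direction, to homogeneous components in the other). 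This is conceptually sharper about why the equivalence holds, at the cost of having to verify the subobject correspondence carefully, which the paper's route sidesteps. Both approaches are valid.

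One small inaccuracy in the write-up: the sentence ``conversely every $\mathcal R$-submodule of $Gr(\mathscr W)$ decomposes as such a direct sum because the grading is compatible with the $\mathcal R$-action'' is false as stated --- a submodule of a graded module need not be graded (e.g.\ the diagonal in $\mathbb Z\oplus\mathbb Z$). Fortunately you never use this ``conversely'' claim: in (b)$\Rightarrow$(c) the submodule $N$ you build \emph{is} graded and $S_n$-stable because you generate it by homogeneous $S_n$-orbits, and in (c)$\Rightarrow$(b) you only need the correspondence in the direction from subobjects of $\mathscr W$ to graded submodules of $Gr(\mathscr W)$. So the argument goes through once that sentence is deleted or replaced by the correct assertion that the correspondence is with graded, $S_n$-stable submodules (and that the $\mathcal R$-submodule generated by homogeneous $S_n$-stable elements is of that form, since the $\mathcal R$-action preserves the grading and commutes with the $S_n$-action).

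Finally, in (b)$\Rightarrow$(a), the phrasing that ``$H_0(\tilde f)$ coincides with the epimorphism I started from'' is slightly off (they have different domains); what you mean is that the original epimorphism $e$ factors as $e = H_0(\tilde f)\circ q$ where $q$ is the canonical epimorphism $\bigoplus_i {_{d_i}\mathscr M_{r_i}} \to H_0(\bigoplus_i {_{d_i}\mathscr M_{r_i}})$, so $H_0(\tilde f)$ is forced to be an epimorphism, and then Lemma \ref{Lem2.5}(c) applies. With that cosmetic fix the step is fine.
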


\begin{proof} (a) $\Rightarrow (b)$ : By definition, $H_0(\mathscr V)$ is a quotient of $\mathscr V$. If $\mathscr V$ is finitely generated, so is its quotient
$H_0(\mathscr V)$.

\smallskip
(b) $\Rightarrow$ (a) : By Proposition \ref{L2.4}(b), we know that the collection $\{_d\mathscr M_r\}_{r\in \mathcal R,d\geq 0}$ is a set of generators for the Grothendieck category $FI_{\mathcal R}$. This gives us an epimorphism 
\begin{equation}
\underset{i\in I}{\bigoplus}\textrm{ } {_{d_i}\mathscr M_{r_i}}\longrightarrow \mathscr V
\end{equation} for some   collection $\{(d_i,r_i)\}_{i\in I}$ with each $d_i\geq 0$ and $r_i\in\mathcal R$. By Lemma \ref{Lem2.5}(c), applying the functor $H_0$ induces an epimorphism 
$\underset{i\in I}{\bigoplus}\textrm{ } H_0({_{d_i}\mathscr M_{r_i}})\longrightarrow H_0(\mathscr V)$. Since $H_0(\mathscr V)$ is finitely generated, it follows that there
is a finite subset $J\subseteq I$ such that $\underset{i\in J}{\bigoplus}\textrm{ } H_0({_{d_i}\mathscr M_{r_i}})\longrightarrow H_0(\mathscr V)$ is an epimorphism. Applying
Lemma \ref{Lem2.5}(c) again, we see that $\underset{i\in J}{\bigoplus}\textrm{ } {_{d_i}\mathscr M_{r_i}}\longrightarrow \mathscr V$ is an epimorphism. Each $_{d_i}\mathscr M_{r_i}$
is finitely generated by Proposition \ref{L2.4}(a) and since $J$ is finite, the result follows. 

\smallskip
(b) $\Rightarrow$ (c) : Since $H_0(\mathscr V)$ is finitely generated and $H_0$ is an idempotent functor, it follows from Proposition \ref{L2.4}(c) that there is an epimorphism 
$\underset{i\in I}{\bigoplus}\textrm{ } H_0({_{d_i}\mathscr M_{r_i}})\longrightarrow H_0(\mathscr V)$ for a finite set $I$.  It suffices therefore to show that $\underset{n\geq 0}{\bigoplus}H_0({_d}\mathcal M_r)_n$ is finitely generated in  $Mod-\mathcal R$  for each $d\geq 0$ and each $r\in \mathcal R$. Since $H_r$ is finitely generated in $Mod-\mathcal R$
for each $r\in \mathcal R$, it is clear from the definitions in \eqref{mdr} and \eqref{h0} that each $H_0({_d}\mathcal M_r)_n$ is finitely generated in  $Mod-\mathcal R$. We also
notice that for $n\geq d+1$, every morphism $[d]\longrightarrow [n]$ in $FI$ factors through a subset of $[n]$ of cardinality $\leq n-1$. The quotient in \eqref{h0} now shows
that $H_0({_d}\mathcal M_r)_n=0$ for $n\geq d+1$. This proves the result. 

\smallskip
(c) $\Rightarrow$ (b) : Since $\{_d\mathscr M_r\}_{r\in \mathcal R,d\geq 0}$ is a set of generators for the category $FI_{\mathcal R}$, we must have an epimorphism 
$
\underset{i\in I}{\bigoplus}\textrm{ } {_{d_i}\mathscr M_{r_i}}\longrightarrow H_0(\mathscr V)
$. Since $Gr(H_0(\mathscr V))$ is finitely generated, we can find some finite subset $J\subseteq I$ such that $Coker\left(\underset{i\in J}{\bigoplus}\textrm{ } Gr({_{d_i}\mathscr M_{r_i}})\longrightarrow Gr(H_0(\mathscr V))\right)=0$. It follows that $\underset{i\in J}{\bigoplus}\textrm{ }  {_{d_i}\mathscr M_{r_i}}\longrightarrow  H_0(\mathscr V)$
is an epimorphism. 
\end{proof}

\begin{thm}\label{P2.7} Let $\mathscr V\in FI_{\mathcal R}$ and fix $d\geq 0$. Then, the following are equivalent:

\smallskip
(a) $\mathscr V$ is generated in degree $\leq d$.

\smallskip
(b) $H_0(\mathscr V)$ is  generated in degree $\leq d$.

\smallskip
(c) $H_0(\mathscr V)_n=0$ for $n>d$. 

\end{thm}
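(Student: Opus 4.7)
The plan is to mimic the structure of Proposition \ref{P2.6}, establishing (a)$\Leftrightarrow$(b) and (b)$\Leftrightarrow$(c) via four implications. The main tools are Proposition \ref{L2.4}(e), which characterises generation in degree $\leq d$ via epimorphisms from $\bigoplus {_{d_i}\mathscr M_{r_i}}$ with $d_i\leq d$; Lemma \ref{L2.3}, identifying morphisms $_e\mathscr M_r\to \mathscr V$ with elements of $\mathscr V(e)(r)$; Lemma \ref{Lem2.5}, which says $H_0$ preserves colimits and detects epimorphisms; and the calculation from the proof of Proposition \ref{P2.6} that $H_0({_e}\mathscr M_r)_n=0$ for $n\geq e+1$.

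For (a)$\Rightarrow$(b), the canonical epimorphism $\pi_{\mathscr V}:\mathscr V\twoheadrightarrow H_0(\mathscr V)$ sends any generating family of $\mathscr V$ in degrees $\leq d$ to a generating family of $H_0(\mathscr V)$ in the same degrees; the verification amounts to pulling back a subobject of $H_0(\mathscr V)$ along $\pi_{\mathscr V}$. For (b)$\Rightarrow$(a), choose an epimorphism $g:\bigoplus {_{d_i}\mathscr M_{r_i}}\twoheadrightarrow H_0(\mathscr V)$ with $d_i\leq d$ via \ref{L2.4}(e), invoke Lemma \ref{L2.3} and surjectivity of $\mathscr V(d_i)(r_i)\twoheadrightarrow H_0(\mathscr V)(d_i)(r_i)$ to lift each summand of $g$ to a morphism $f:\bigoplus {_{d_i}\mathscr M_{r_i}}\to \mathscr V$ with $\pi_{\mathscr V}\circ f=g$, and apply $H_0$. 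The key technical observation, which follows from idempotence of $H_0$ together with naturality of $\pi$ (since $H_0(\pi_{\mathscr V})\circ\pi_{\mathscr V}=\pi_{H_0(\mathscr V)}\circ\pi_{\mathscr V}=\pi_{\mathscr V}$ and $\pi_{\mathscr V}$ is epi), is that $H_0(\pi_{\mathscr V})=\mathrm{id}_{H_0(\mathscr V)}$. Hence $H_0(f)=H_0(g)$ is an epimorphism, so Lemma \ref{Lem2.5}(c) upgrades $f$ to an epimorphism and \ref{L2.4}(e) concludes.

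For (b)$\Rightarrow$(c), apply $H_0$ to the epimorphism $\bigoplus {_{d_i}\mathscr M_{r_i}}\twoheadrightarrow H_0(\mathscr V)$ from \ref{L2.4}(e); by Lemma \ref{Lem2.5}(a),(c) we obtain an epimorphism $\bigoplus H_0({_{d_i}\mathscr M_{r_i}})\twoheadrightarrow H_0(\mathscr V)$, and evaluation at $n>d$ annihilates the left side because each $H_0({_{d_i}\mathscr M_{r_i}})_n$ vanishes for $n>d_i\leq d$, forcing $H_0(\mathscr V)_n=0$. For (c)$\Rightarrow$(b), pick any epimorphism $\bigoplus {_{e_i}\mathscr M_{r_i}}\twoheadrightarrow H_0(\mathscr V)$ from the generating set of Proposition \ref{L2.4}(b); by Lemma \ref{L2.3} each summand with $e_i>d$ corresponds to an element of $H_0(\mathscr V)_{e_i}(r_i)=0$ and is therefore the zero map, so discarding these summands preserves the epimorphism and \ref{L2.4}(e) gives (b). The most delicate step is the identification $H_0(\pi_{\mathscr V})=\mathrm{id}_{H_0(\mathscr V)}$ needed in (b)$\Rightarrow$(a); once that is in hand, the remaining implications are formal manipulations with the generating objects $_d\mathscr M_r$ and the epimorphism-detection property of $H_0$.
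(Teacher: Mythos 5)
Your proof is correct, and while it draws on the same toolkit as the paper (Proposition~\ref{L2.4}(e), Lemma~\ref{L2.3}, Lemma~\ref{Lem2.5}, and the vanishing $H_0({_{d'}\mathscr M_{r}})_n=0$ for $n>d'$), it organizes the implications differently. The paper closes the cycle $(a)\Rightarrow(b)\Rightarrow(c)\Rightarrow(a)$, where $(c)\Rightarrow(a)$ restricts an arbitrary presenting epimorphism $\tilde e:\bigoplus{_{d_i}\mathscr M_{r_i}}\twoheadrightarrow\mathscr V$ to the summands with $d_i\le d$ and then verifies directly (splitting into the cases $n>d$ and $n\le d$) that the restricted map stays epi after $H_0$. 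You instead prove $(a)\Leftrightarrow(b)$ and $(b)\Leftrightarrow(c)$ separately, which requires the implication $(b)\Rightarrow(a)$ that the paper never needs; the ingredient you supply for it --- that $H_0(\pi_{\mathscr V})=\mathrm{id}_{H_0(\mathscr V)}$, deduced from naturality of $\pi$ and the fact that $\pi_{H_0(\mathscr V)}$ is literally the identity because $H_0(\mathscr V)(\phi)=0$ whenever $|T|<|S|$ --- is correct but is an extra lemma not present in the paper. Your $(c)\Rightarrow(b)$ step is a modest simplification over the paper's $(c)\Rightarrow(a)$: rather than compute that $H_0({_{d_i}\mathscr M_{r_i}})_n$ vanishes, you observe via Lemma~\ref{L2.3} that a summand ${_{e_i}\mathscr M_{r_i}}\to H_0(\mathscr V)$ with $e_i>d$ is forced to be zero, which is a cleaner way of seeing why the high-degree generators can be discarded. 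In net the two proofs are comparable in length; yours is slightly more symmetric and isolates the useful identity $H_0(\pi)=\mathrm{id}$, while the paper's cyclic route avoids needing that identity at all.
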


\begin{proof} (a) $\Rightarrow (b)$ : By definition, $H_0(\mathscr V)$ is a quotient of $\mathscr V$. Hence, this is clear from Proposition \ref{L2.4}(e).

\smallskip
(b) $\Rightarrow$ (c) : By reasoning similar to the  proof of (b) $\Rightarrow$ (c) in Proposition \ref{P2.6}, it suffices to show that $H_0({_{d'}}\mathcal M_{r'})_n=0$ 
when $d'\leq d$ and $n>d$. This latter fact has also been established in the proof of Proposition \ref{P2.6}.

\smallskip
(c) $\Rightarrow$ (a) : Since $\{_d\mathscr M_r\}_{r\in \mathcal R,d\geq 0}$ is a set of generators for the category $FI_{\mathcal R}$, we must have an epimorphism
\begin{equation}\label{eqr2.10} 
\tilde e:\underset{i\in I}{\bigoplus}\textrm{ } {_{d_i}\mathscr M_{r_i}}\longrightarrow \mathscr V
\end{equation}  We restrict to all pairs $(d_i,r_i)_{i\in I}$ with $d_i\leq d$ and consider the induced morphism
\begin{equation}
\tilde f: \underset{i\in I, d_i\leq d}{\bigoplus}\textrm{ } {_{d_i}\mathscr M_{r_i}}\longrightarrow \mathscr V
\end{equation} We claim that $\tilde f$ is an epimorphism. By Lemma \ref{Lem2.5}, it suffices to show that $H_0(\tilde f)$ is an epimorphism, i.e., 
each $H_0(\tilde f)_n$ is an epimorphism. For $n>d$ we have
\begin{equation}
H_0(\tilde f)_n: \underset{i\in I,d_i\leq d}{\bigoplus}\textrm{ } H_0\left( {_{d_i}\mathscr M_{r_i}}\right)_n\longrightarrow H_0(\mathscr V)_n=0
\end{equation} which must be an epimorphism. We now consider $n\leq d$ and examine the epimorphism
\begin{equation}\label{eqr2.13} 
H_0(\tilde e)_n:\underset{i\in I}{\bigoplus}\textrm{ }H_0( {_{d_i}\mathscr M_{r_i}})_n\longrightarrow H_0(\mathscr V)_n
\end{equation}
induced by \eqref{eqr2.10}. By definition, $H_0( {_{d_i}\mathscr M_{r_i}})_n$ is a quotient of $H_{r_i}^{([d_i],[n])}$. For any $d_i>d$, we must therefore have $H_0( {_{d_i}\mathscr M_{r_i}})_n=0$ since $n\leq d< d_i$. Hence, $H_0(\tilde f)_n=H_0(\tilde e)_n$ is an epimorphism for $n\leq d$  and the result follows. 

\end{proof}

We now recall some generalities on objects in a Grothendieck abelian category $\mathcal A$ that we will use throughout this paper (see, for instance, \cite{AR}, \cite{Bo} and \cite{Pop})

\smallskip
(a) An object $X$ in $\mathcal A$ is said to be finitely generated if the functor $Hom_{\mathcal A}(X,\_\_):\mathcal A\longrightarrow \mathbf{Ab}$ preserves filtered colimits of monomorphisms.

\smallskip
(b) An object $X$ in $\mathcal A$ is said to be finitely presented if the functor $Hom_{\mathcal A}(X,\_\_):\mathcal A\longrightarrow \mathbf{Ab}$ preserves filtered colimits. 

\smallskip
(c) An object $Y$ in $\mathcal A$ is said to be noetherian if every subobject is finitely generated.

\smallskip
(d) The category $\mathcal A$ is said to be locally noetherian if it has a set of noetherian generators.

\smallskip
In a locally noetherian Grothendieck category $\mathcal A$, the finitely generated objects coincide with the finitely presented objects  (see, for instance, \cite[Chapter 5.8]{Pop}) as well as with the noetherian objects. Further, the full subcategory of finitely generated objects in $\mathcal A$
forms an abelian category, which we denote by $\mathcal A^{fg}$. 

\smallskip
We conclude this section by recalling the following result.

\begin{Thm}\label{Th2.8} (see \cite[Theorem A]{Four1}, \cite{Djament},  \cite{PS}, \cite{SS} and  \cite[Theorem 9.1]{Krause1}) Let $\mathcal A$ be a locally noetherian Grothendieck category. Then, the category $Fun(FI,\mathcal A)$ of functors
from $FI$ to $\mathcal A$ is  locally noetherian. 

\end{Thm}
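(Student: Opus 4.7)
The plan is to verify the two defining properties of a locally noetherian Grothendieck category for $Fun(FI,\mathcal A)$: that it is a Grothendieck category, and that it admits a set of noetherian generators. The Grothendieck property has already been noted just after Definition \ref{D2.1} via Faith's theorem (using that $FI$ is essentially small and $\mathcal A$ is Grothendieck), so the task reduces to producing noetherian generators.

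Fix a set $\{G_\alpha\}_{\alpha\in I}$ of noetherian generators of $\mathcal A$. I would construct analogues of the objects in \eqref{mdr} by setting
\begin{equation*}
{_d\mathscr M_{G_\alpha}}(S) := G_\alpha^{([d],S)} \qquad \text{for each $d\geq 0$ and each finite set $S$.}
\end{equation*}
The Yoneda-style computation in Lemma \ref{L2.3} carries over verbatim to yield a natural isomorphism
\begin{equation*}
\mathrm{Hom}_{Fun(FI,\mathcal A)}\bigl({_d\mathscr M_{G_\alpha}},\,\mathscr V\bigr)\;\cong\;\mathrm{Hom}_{\mathcal A}\bigl(G_\alpha,\,\mathscr V(d)\bigr),
\end{equation*}
and then the argument of Proposition \ref{L2.4}(b) shows that the family $\{{_d\mathscr M_{G_\alpha}}\}_{\alpha\in I,\,d\geq 0}$ is a set of generators of $Fun(FI,\mathcal A)$. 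The whole problem therefore reduces to proving that each ${_d\mathscr M_{G_\alpha}}$ is noetherian in $Fun(FI,\mathcal A)$.

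For this, my preferred route is the Sam--Snowden framework: exhibit $FI$ as a \emph{quasi-Gr\"obner category} by mapping a genuinely Gr\"obner category (for instance, the category $OI$ of finite ordered sets and order-preserving injections) essentially surjectively onto $FI$ and verifying their property (F). Their general theorem then asserts that functors from any quasi-Gr\"obner category into a locally noetherian abelian category again form a locally noetherian category, which in particular gives noetherianity of each ${_d\mathscr M_{G_\alpha}}$. An alternative is the Church--Ellenberg--Farb--Nagpal shift-trick: induct on $d$ using the shift functor $\mathbb S$ of the introduction, producing a filtration of ${_d\mathscr M_{G_\alpha}}$ whose successive quotients involve ${_{d'}\mathscr M_{G'}}$ for $d' < d$ and $G'$ still noetherian in $\mathcal A$.

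The main obstacle common to either route is the underlying combinatorial noetherianity of $FI$ itself --- essentially a Higman-style well-partial-order statement about sequences of injections. This is precisely the content of the cited results \cite{Four1}, \cite{Djament}, \cite{PS}, \cite{SS}, \cite{Krause1}; once that combinatorial input is granted, the deduction that each ${_d\mathscr M_{G_\alpha}}$ is noetherian, and consequently that the full subcategory of noetherian objects of $Fun(FI,\mathcal A)$ generates, is essentially formal. My plan is therefore to quote one of those references for the key noetherianity of the generators and assemble the rest from Proposition \ref{L2.4}.
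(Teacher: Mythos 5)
The paper does not actually prove Theorem \ref{Th2.8}; it is stated purely as a quoted result with citations to \cite{Four1}, \cite{Djament}, \cite{PS}, \cite{SS} and \cite{Krause1}, and no argument is given in the text. Your proposal is therefore not competing with a proof in the paper, but rather sketching how the cited literature establishes the result. That sketch is reasonable: the reduction to exhibiting a set of noetherian generators, the construction of the objects ${_d\mathscr M_{G_\alpha}}$ by mimicking \eqref{mdr} with $H_r$ replaced by the noetherian generators $G_\alpha$ of $\mathcal A$, and the transfer of Lemma \ref{L2.3} and Proposition \ref{L2.4}(b) all go through without incident, since those arguments only use that $\mathcal A$ is Grothendieck and that $G_\alpha$ is a (finitely generated, hence compact-in-the-filtered-monos sense) generator. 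Deferring the genuine content --- noetherianity of each ${_d\mathscr M_{G_\alpha}}$, which rests on the Higman-type combinatorics of $FI$ --- to the cited references is exactly what the paper itself does.

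One small caution on your proposed alternative via the shift functor: the Church--Ellenberg--Farb--Nagpal argument does not produce a filtration of ${_d\mathscr M_{G_\alpha}}$ by objects ${_{d'}\mathscr M_{G'}}$ with $d'<d$; rather, they analyze an arbitrary submodule $\mathscr W$ of a finitely generated $FI$-module by comparing $\mathscr W$ with $\mathbb S\mathscr W$ and running an induction on the generating degree, with the combinatorial input entering through the structure of the shift (compare the decomposition \eqref{eq3.4}). If you intend to present that route, you should restate it in terms of submodules rather than a filtration of the projective generators, since the generators ${_d\mathscr M_{G_\alpha}}$ themselves have no such filtration by lower-degree $\mathscr M$'s. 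The Sam--Snowden quasi-Gr\"obner route you describe first is the cleaner one to cite, and \cite[Theorem 9.1]{Krause1} gives the general statement for functors into an arbitrary locally noetherian Grothendieck category, which is exactly the formulation needed here.
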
 

In particular, if $\mathcal R$ is a small preadditive category such that $Mod-\mathcal R$ is locally noetherian, it follows from Theorem \ref{Th2.8} that the category $FI_{\mathcal R}$ is
locally noetherian. In that case, if $\mathscr V$ is a finitely generated $FI$-module over $\mathcal R$, any submodule of $\mathscr V$ is finitely generated.

\section{Torsion theories and the positive shift functor}

\medskip
We recall that a torsion theory $\tau=(\mathcal T,\mathcal F)$ on an abelian category $\mathcal A$ consists of a pair of full and replete subcategories $\mathcal T$ and $\mathcal F$
of $\mathcal A$ such that $Hom_{\mathcal A}(T,F)=0$ for any $T\in\mathcal T$, $F\in \mathcal F$ and for any object $X\in \mathcal A$ there exists a short exact sequence
\begin{equation*}
\begin{CD}
0 @>>> T(X)@>>> X@>>> F(X) @>>> 0
\end{CD}
\end{equation*}
with $T(X)\in \mathcal T$, $F(X)\in \mathcal F$ (see, for instance, \cite[$\S$ I.1]{BeRe}). 

\smallskip
Accordingly, we start with a torsion theory $\tau=(\mathcal T,\mathcal F)$ on $Mod-\mathcal R$. Let $\mathcal R$ be a small preadditive category such that
$Mod-\mathcal R$ is locally noetherian. From Theorem \ref{Th2.8}, we know that the category $FI_{\mathcal R}$ is locally noetherian.  We will show how to extend $\tau$ to a torsion theory
$(\overline{\mathcal T},\overline{\mathcal F})$ on the abelian category $FI_{\mathcal R}^{fg}$ of finitely generated $FI$-modules over $\mathcal R$. We let $\overline{\mathcal T}$ be the full subcategory of $FI_{\mathcal R}^{fg}$
defined by setting
\begin{equation}\label{trskc}
Ob(\overline{\mathcal T}):=\{\mbox{$\mathscr V\in Ob(FI_{\mathcal R}^{fg})$ $\vert$ $\mathscr V_n\in \mathcal T$ for $n\gg 0$}\}
\end{equation} We need to show that $\overline{\mathcal T}$ is a torsion class in $FI_{\mathcal R}^{fg}$. If an abelian category is complete and cocomplete, it is well known
(see, for instance, \cite[$\S$ I.1]{BeRe}) that any full subcategory closed under quotients, extensions and arbitrary coproducts must be a torsion class. However, $FI_{\mathcal R}^{fg}$ being the subcategory of finitely generated $FI$-modules, does not contain arbitrary coproducts. As such, in order to identify torsion classes in $FI_{\mathcal R}^{fg}$, we will use the following
simple result from \cite{AB1}. 

\begin{thm}\label{xwP3.1} (see \cite[Proposition 4.8]{AB1}) Let $\mathcal B$ be an abelian category such that every object in $\mathcal B$ is noetherian. Let $\mathcal C\subseteq \mathcal B$ be a
full and replete subcategory that is closed under extensions and quotients. Let $\mathcal C^\perp\subseteq \mathcal B$ be the full subcategory
given by
\begin{equation*}Ob(\mathcal C^\perp) := \{\mbox{$N\in \mathcal B$ $\vert$ $ Hom_{\mathcal B}(C, N) = 0$ for all $C\in \mathcal C$}\}
\end{equation*}
Then $(\mathcal C, \mathcal C^\perp)$
 is a torsion pair on $\mathcal B$.

\end{thm}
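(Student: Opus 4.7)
The plan is to verify the two axioms of a torsion pair: (i) the orthogonality $\mathrm{Hom}_{\mathcal B}(C,N)=0$ for all $C\in\mathcal C$, $N\in\mathcal C^\perp$, and (ii) for every $X\in\mathcal B$, the existence of a short exact sequence $0\to T(X)\to X\to F(X)\to 0$ with $T(X)\in\mathcal C$ and $F(X)\in\mathcal C^\perp$. Part (i) is immediate from the very definition of $\mathcal C^\perp$, so the real content is in producing the torsion decomposition, and noetherianness of $\mathcal B$ is the tool that makes this possible in the absence of arbitrary coproducts.

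For part (ii), first I would note that $\mathcal C$ is closed under subobjects of the form $0$: since $\mathcal C$ is closed under quotients and (assuming it is nonempty, else the statement is trivial with $\mathcal C^\perp=\mathcal B$) contains some object $C$, the zero quotient of $C$ lies in $\mathcal C$, and by repleteness every zero object of $\mathcal B$ lies in $\mathcal C$. Hence the collection $\mathcal S_X$ of subobjects of $X$ lying in $\mathcal C$ is nonempty. Because $X$ is noetherian, $\mathcal S_X$ has the ascending chain condition, and consequently a maximal element $T(X)\subseteq X$. This will be our candidate for the torsion subobject.

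The heart of the argument is to show $F(X):=X/T(X)\in\mathcal C^\perp$. I would argue by contradiction: suppose there exist $C\in\mathcal C$ and a nonzero morphism $f\colon C\to X/T(X)$. Then $\mathrm{im}(f)$ is a nonzero quotient of $C$, hence $\mathrm{im}(f)\in\mathcal C$ by closure under quotients. Let $\pi\colon X\to X/T(X)$ be the canonical projection and set $Y:=\pi^{-1}(\mathrm{im}(f))\subseteq X$, so that there is a short exact sequence
\begin{equation*}
\begin{CD}
0 @>>> T(X) @>>> Y @>>> \mathrm{im}(f) @>>> 0.
\end{CD}
\end{equation*}
Since $T(X),\mathrm{im}(f)\in\mathcal C$ and $\mathcal C$ is closed under extensions, $Y\in\mathcal C$; and since $\mathrm{im}(f)\neq 0$, we have $T(X)\subsetneq Y\subseteq X$, contradicting the maximality of $T(X)$ in $\mathcal S_X$. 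Therefore $F(X)\in\mathcal C^\perp$, completing the torsion decomposition.

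I do not expect any step to be a serious obstacle; the only mildly delicate point is checking that the pullback construction genuinely produces a subobject of $X$ properly containing $T(X)$ which lies in $\mathcal C$, and this is where both closure hypotheses on $\mathcal C$ (extensions and quotients) are simultaneously needed. Noetherianness is used exactly once, to extract the maximal $T(X)$, which is what compensates for the lack of arbitrary coproducts in $\mathcal B$.
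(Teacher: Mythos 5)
Your proof is correct and follows the standard approach: use noetherianness of $X$ to extract a maximal subobject $T(X)$ of $X$ lying in $\mathcal C$, then show by contradiction (pulling back a nonzero image along the projection and using closure of $\mathcal C$ under extensions) that $X/T(X)\in\mathcal C^\perp$. The paper simply cites \cite[Proposition 4.8]{AB1} without reproducing the argument, but this is the standard proof of that result, so there is no meaningful divergence in method. One small slip in the parenthetical aside: if $\mathcal C$ were genuinely empty, the conclusion would not be ``trivial'' but false, since no short exact sequence $0\to T(X)\to X\to F(X)\to 0$ with $T(X)\in\mathcal C$ could exist; the empty case is simply excluded by the usual convention that a torsion class contains the zero object, which is what your argument (correctly) uses.
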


\begin{thm}\label{xxP3.2} Let $\mathcal R$ be a small preadditive category such that
$Mod-\mathcal R$ is locally noetherian. Let $\mathcal T$ be a torsion class on $Mod-\mathcal R$. Then, $\overline{\mathcal T}$ is a torsion 
class in the category $FI_{\mathcal R}^{fg}$ of finitely generated $FI$-modules over $\mathcal R$. Additionally, if $\mathcal T$ is a hereditary torsion
class, so is $\overline{\mathcal T}$. 
\end{thm}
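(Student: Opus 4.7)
The plan is to invoke Proposition \ref{xwP3.1} applied to $\mathcal B=FI_{\mathcal R}^{fg}$ and $\mathcal C=\overline{\mathcal T}$. Since $Mod-\mathcal R$ is locally noetherian, Theorem \ref{Th2.8} tells us that $FI_{\mathcal R}$ is locally noetherian, so every finitely generated object of $FI_{\mathcal R}$ is noetherian; in particular, $FI_{\mathcal R}^{fg}$ is an abelian category in which every object is noetherian, and the hypotheses of \ref{xwP3.1} apply. Note also that $\overline{\mathcal T}$ is full and replete since membership is defined purely by a condition on isomorphism classes of the evaluations $\mathscr V_n$.

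It then remains to verify that $\overline{\mathcal T}$ is closed under quotients and extensions. Both verifications reduce to pointwise statements, since kernels and cokernels in $FI_{\mathcal R}$ are computed pointwise in $Mod-\mathcal R$. For quotients: if $\mathscr V\twoheadrightarrow\mathscr W$ in $FI_{\mathcal R}^{fg}$ with $\mathscr V\in\overline{\mathcal T}$, then each $\mathscr V_n\twoheadrightarrow\mathscr W_n$ is an epimorphism in $Mod-\mathcal R$, and closure of $\mathcal T$ under quotients gives $\mathscr W_n\in\mathcal T$ for $n\gg 0$. For extensions: a short exact sequence $0\to\mathscr V'\to\mathscr V\to\mathscr V''\to 0$ with $\mathscr V',\mathscr V''\in\overline{\mathcal T}$ yields pointwise short exact sequences $0\to\mathscr V'_n\to\mathscr V_n\to\mathscr V''_n\to 0$; choosing $n$ large enough that both $\mathscr V'_n$ and $\mathscr V''_n$ lie in $\mathcal T$, closure of $\mathcal T$ under extensions gives $\mathscr V_n\in\mathcal T$. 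Proposition \ref{xwP3.1} then produces the torsion pair $(\overline{\mathcal T},\overline{\mathcal T}^\perp)$ in $FI_{\mathcal R}^{fg}$.

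For the second assertion, if $\mathcal T$ is hereditary I would check that $\overline{\mathcal T}$ is closed under subobjects in $FI_{\mathcal R}^{fg}$. Given $\mathscr V\in\overline{\mathcal T}$ and a monomorphism $\mathscr W\hookrightarrow\mathscr V$ in $FI_{\mathcal R}^{fg}$ (with $\mathscr W$ automatically finitely generated by local noetherianness), we again get pointwise monomorphisms $\mathscr W_n\hookrightarrow\mathscr V_n$, so heredity of $\mathcal T$ gives $\mathscr W_n\in\mathcal T$ for $n\gg 0$, i.e., $\mathscr W\in\overline{\mathcal T}$.

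The genuinely nontrivial input is already packaged inside Proposition \ref{xwP3.1} and Theorem \ref{Th2.8}; once these are available, all three closure properties reduce transparently to their pointwise analogues for $\mathcal T$, so there is no real obstacle beyond correctly setting up the reduction to the finitely generated subcategory. The one subtlety worth flagging is that torsion classes in cocomplete abelian categories are usually identified via closure under arbitrary coproducts, but $FI_{\mathcal R}^{fg}$ lacks these; this is precisely why the specialised criterion \ref{xwP3.1}, which exploits the noetherian hypothesis to replace coproducts by extensions, is the right tool here.
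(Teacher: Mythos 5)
Your proposal is correct and follows essentially the same route as the paper: reduce to Proposition \ref{xwP3.1} via the noetherianness of $FI_{\mathcal R}^{fg}$ furnished by Theorem \ref{Th2.8}, then verify closure under quotients, extensions, and (for heredity) subobjects pointwise using that kernels and cokernels in $FI_{\mathcal R}$ are computed in $Mod\text{-}\mathcal R$. The only difference is cosmetic: you explicitly note that $\overline{\mathcal T}$ is full and replete and that subobjects of finitely generated objects are automatically finitely generated, points the paper leaves implicit.
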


\begin{proof} Since $FI_{\mathcal R}$ is locally noetherian, it follows that every object in the category $FI_{\mathcal R}^{fg}$ is noetherian. Applying Proposition 
\ref{xwP3.1}, it suffices to show that $\overline{\mathcal T}$ is closed under extensions and quotients. Accordingly, if 
$0\longrightarrow\mathscr V'\longrightarrow\mathscr V\longrightarrow \mathscr V''\longrightarrow 0$ is a short exact sequence with $\mathscr V'$, $\mathscr V''\in 
\overline{\mathcal T}$, we can choose $N$ large enough so that $\mathscr V'_n$, $\mathscr V''_n\in\mathcal T$ for all $n>N$. We have short exact sequences
\begin{equation}
0\longrightarrow\mathscr V'_n\longrightarrow\mathscr V_n\longrightarrow \mathscr V''_n\longrightarrow 0
\end{equation} Since $\mathcal T$ is closed under extensions, it now follows that $\mathscr V_n\in\mathcal T$ for all $n>N$. Hence, $\mathscr V\in \overline{\mathcal T}$. 

\smallskip
On the other hand, if $\mathscr V'\longrightarrow \mathscr V$ is an epimorphism with $\mathscr V'\in \overline{\mathcal T}$, we know that since $\mathcal T$ is closed under quotients,
we must have $\mathscr V_n\in \mathcal T$ for $n\gg 0$. This gives $\mathscr V\in \overline{\mathcal T}$. By similar reasoning, it is clear that if $\mathcal T$ is a hereditary torsion
class (i.e., closed under subobjects), so is $\overline{\mathcal T}$. 

\end{proof}

\smallskip Given $\mathscr V\in FI_{\mathcal R}^{fg}$, we would like to obtain an explicit description for its torsion subobject in $\overline{\mathcal T}$. For this, we will need
to consider (positive) `shift functors' on the category $FI_{\mathcal R}$ in a manner analogous to \cite[$\S$ 2.1]{Four1}. 
For each $a\geq 0$, we fix a set $[-a]$ of cardinality $a$. Then, the category $FI$ is equipped with a shift functor
\begin{equation}
{\mathbb S}^a: FI\longrightarrow FI \qquad S\mapsto S\sqcup [-a]
\end{equation} formed by taking the disjoint union with $[-a]$. For a morphism $\phi:S\longrightarrow T$ in $FI$, ${\mathbb S}^a(\phi)$ is obtained by extending $\phi$ with
the identity on $[-a]$. Then, ${\mathbb S}^a$ induces a ``positive shift functor'' on $FI_{\mathcal R}$, which we continue to denote by ${\mathbb S}^a$
\begin{equation}
{\mathbb S}^a: FI_{\mathcal R}\longrightarrow FI_{\mathcal R}\qquad \mathscr  V\mapsto \mathscr V\circ {\mathbb S}^a
\end{equation} It is immediate that ${\mathbb S}^a$ preserves all limits and colimits. It is also clear that ${\mathbb S}^a$ does not depend on the choice of the set $[-a]$ of cardinality $a$. Before we proceed further, we will collect some basic properties of the functor ${\mathbb S}^a$. 

\begin{thm}\label{P3.1} Fix $a\geq 0$. If $\mathscr V\in FI_{\mathcal R}$  is generated in degree $\leq d$, then ${\mathbb S}^a(\mathscr V)$ is also generated in degree $\leq d$. 
\end{thm}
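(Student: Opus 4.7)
I would prove Proposition \ref{P3.1} by reducing to the case of the standard projective generators ${_d}\mathscr M_r$ and exploiting an explicit decomposition of $\mathbb S^a({_d}\mathscr M_r)$ as a finite direct sum of standard generators in degrees $\leq d$.

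First I would invoke Proposition \ref{L2.4}(e): since $\mathscr V$ is generated in degree $\leq d$, there is an epimorphism
\begin{equation*}
\bigoplus_{i\in I}\, {_{d_i}}\mathscr M_{r_i}\longrightarrow \mathscr V
\end{equation*}
with $0\leq d_i\leq d$ and $r_i\in\mathcal R$. Since $\mathbb S^a$ is defined by precomposition with the endofunctor $\mathbb S^a:FI\to FI$, it preserves all limits and colimits (as was already noted in the paragraph introducing the shift functor), and in particular preserves epimorphisms and direct sums. Applying $\mathbb S^a$ therefore yields an epimorphism
\begin{equation*}
\bigoplus_{i\in I}\, \mathbb S^a({_{d_i}}\mathscr M_{r_i})\longrightarrow \mathbb S^a(\mathscr V).
\end{equation*}
By Proposition \ref{L2.4}(e) again, it now suffices to show that for each $0\leq e\leq d$ and each $r\in\mathcal R$, the object $\mathbb S^a({_e}\mathscr M_r)$ is generated in degree $\leq e$ (hence in degree $\leq d$).

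The key combinatorial step is a natural decomposition of the set of injections $([e], S\sqcup [-a])$. Any such injection $\phi$ is determined by the datum of $D:=\phi^{-1}([-a])\subseteq [e]$, the restriction $\phi|_D:D\hookrightarrow[-a]$, and the restriction $\phi|_{[e]\setminus D}:[e]\setminus D\hookrightarrow S$. This gives a bijection
\begin{equation*}
([e],S\sqcup[-a])\;=\;\bigsqcup_{D\subseteq [e]}\,(D,[-a])\times([e]\setminus D,\,S),
\end{equation*}
which is natural in $S\in FI$, because a morphism $\psi:S\to S'$ acts through $\mathbb S^a(\psi)=\psi\sqcup\mathrm{id}_{[-a]}$ and thus preserves the subset $D$ while post-composing on the $([e]\setminus D,S)$ factor. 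Choosing a bijection $[e-|D|]\cong [e]\setminus D$ gives an isomorphism of $FI$-modules
\begin{equation*}
\mathbb S^a({_e}\mathscr M_r)\;\cong\;\bigoplus_{D\subseteq [e]}\, \bigl({_{e-|D|}}\mathscr M_r\bigr)^{(D,[-a])},
\end{equation*}
a finite direct sum of standard generators ${_f}\mathscr M_r$ with $0\leq f=e-|D|\leq e$. By Proposition \ref{L2.4}(e) once more, this direct sum is generated in degree $\leq e$, which completes the proof.

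The main obstacle is verifying naturality of the combinatorial decomposition in $S$; I anticipate this will be routine, because the shift functor on $FI$ fixes $[-a]$ pointwise, so the partition of injections according to the preimage of $[-a]$ is preserved under any morphism $\mathbb S^a(\psi)$. Once the decomposition is in place, the rest of the argument is purely formal from Proposition \ref{L2.4}(e) and the exactness/cocontinuity of $\mathbb S^a$.
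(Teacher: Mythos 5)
Your proof is correct and follows essentially the same strategy as the paper: reduce via Proposition~\ref{L2.4}(e) and cocontinuity of $\mathbb S^a$ to the standard generators $_{e}\mathscr M_r$, then split $\mathbb S^a({_e}\mathscr M_r)$ as a finite direct sum of standard generators in degrees $\leq e$ by partitioning injections $[e]\hookrightarrow S\sqcup[-a]$ according to which elements land in $[-a]$. Your version is if anything slightly more careful than the printed one, since you index the decomposition by the subset $D=\phi^{-1}([-a])\subseteq[e]$ (thereby accounting for the $\binom{e}{|D|}$ choices and making naturality in $S$ transparent), whereas the paper's displayed identity $([d'],\mathbb S^a(S))=\bigcup_{j=0}^a([d'-j],S)\times([j],[-a])$ records only the cardinality $j=|D|$ and leaves the binomial multiplicity implicit.
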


\begin{proof}
Since ${\mathbb S}^a$ preserves coproducts and epimorphisms, it follows from the `if and only if' condition
in Proposition \ref{L2.4}(e) that it suffices to prove the result for $\mathscr V={_{d'}\mathscr M_{r}}$ with $d'\leq d$.

\smallskip
Given a finite set $S$, we notice easily that
\begin{equation}
([d'],{\mathbb S}^a(S))= 
\underset{j=0}{\overset{a}{\bigcup}}\textrm{ }([d'-j],S)\times ([j],[-a])
\end{equation} Therefore, we obtain
\begin{equation}\label{eq3.4}
{\mathbb S}^a({_{d'}\mathcal M_r})= 
\underset{j=0}{\overset{a}{\bigoplus}}\textrm{ }{_{d'-j}\mathscr M_r}^{([j],[-a])}
\end{equation} 
\end{proof}

\begin{cor}\label{C3.15} For any $d\geq 0$ and $r\in \mathcal R$, we have ${\mathbb S}^a({_{d}\mathscr M_{r}})={_{d}\mathscr M_{r}}\oplus {_{d}\mathscr N_{r}}$, where $ {_{d}\mathscr N_{r}}$ is finitely generated in degree $\leq d-1$. 
\end{cor}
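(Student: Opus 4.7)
The plan is to instantiate equation \eqref{eq3.4} from the proof of Proposition \ref{P3.1} at $d' = d$ and split off the $j = 0$ summand from the rest. The entire statement should then fall out as a direct unpacking, so this is really a corollary in the literal sense.

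First I would write down
\begin{equation*}
{\mathbb S}^a({_d\mathscr M_r}) \;=\; \bigoplus_{j=0}^{a}\, {_{d-j}\mathscr M_r}^{([j],[-a])},
\end{equation*}
interpreting summands with $d-j<0$ as zero (there are no injections from a set of negative cardinality), so that effectively the sum runs over $0 \le j \le \min(a,d)$. For $j = 0$ the indexing set $([0],[-a])$ has exactly one element (the empty injection), so the $j=0$ summand is precisely ${_d\mathscr M_r}$. I would then \emph{define}
\begin{equation*}
{_d\mathscr N_r} \;:=\; \bigoplus_{j=1}^{\min(a,d)}\, {_{d-j}\mathscr M_r}^{([j],[-a])},
\end{equation*}
so that the formula becomes the claimed decomposition ${\mathbb S}^a({_d\mathscr M_r}) = {_d\mathscr M_r} \oplus {_d\mathscr N_r}$.

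Next I would check that ${_d\mathscr N_r}$ is finitely generated in degree $\le d-1$. Each indexing set $([j],[-a])$ is finite (of cardinality $a!/(a-j)!$), so ${_d\mathscr N_r}$ is a \emph{finite} direct sum of objects of the form ${_{d-j}\mathscr M_r}$ with $1 \le j \le \min(a,d)$, hence with $d-j \le d-1$. Since each ${_{d-j}\mathscr M_r}$ is finitely generated by Proposition \ref{L2.4}(a), and since the identity morphism $\mathrm{id}:{_d\mathscr N_r}\to{_d\mathscr N_r}$ exhibits ${_d\mathscr N_r}$ as a quotient of itself via an epimorphism from a finite direct sum of ${_{d_i}\mathscr M_{r_i}}$'s with each $d_i \le d-1$, the criterion in Proposition \ref{L2.4}(e) applies and shows ${_d\mathscr N_r}$ is generated in degree $\le d-1$; finite generation is then immediate from Proposition \ref{L2.4}(c).

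No real obstacle is expected: the only subtlety is the bookkeeping for the edge case $j > d$ (where the formula in \eqref{eq3.4} is vacuous, since injections $[d-j]\hookrightarrow S$ do not exist for negative $d-j$), and the observation that the $j=0$ summand really does reproduce ${_d\mathscr M_r}$ exactly rather than a larger power of it. Both points are immediate from the Yoneda-style counting of $([d],S\sqcup[-a])$ that gave rise to \eqref{eq3.4} in the first place.
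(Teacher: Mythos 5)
Your proof is correct and takes the same approach as the paper, whose entire proof of this corollary is the one line ``This is clear from Proposition \ref{L2.4} and the expression in \eqref{eq3.4}.'' You have simply unpacked the decomposition the paper leaves implicit: peeling off the $j=0$ summand (which equals $_d\mathscr M_r$ since $([0],[-a])$ is a singleton) and observing that the remaining finite direct sum of objects $_{d-j}\mathscr M_r$ with $1\le j\le \min(a,d)$ is finitely generated in degree $\le d-1$ by Proposition \ref{L2.4}(c) and (e).
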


\begin{proof}
This is clear from Proposition \ref{L2.4} and the expression in \eqref{eq3.4}. 
\end{proof}

\begin{lem}\label{L3.2} Fix $a\geq 0$. Then, for every $\mathscr V\in FI_{\mathcal R}$ and every $n\geq 0$, there is an epimorphism $H_0({\mathbb S}^a(\mathscr V))_n\longrightarrow 
H_0(\mathscr V)_{n+a}$ in $Mod-\mathcal R$.
\end{lem}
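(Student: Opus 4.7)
The plan is to exhibit the epimorphism as arising from a comparison of two cokernels built from the same module $\mathscr V([n]\sqcup[-a])$, quotiented by progressively larger sums of images.

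First, I would fix a bijection $[n+a]\cong[n]\sqcup[-a]$, so that $(\mathbb S^a\mathscr V)([n])=\mathscr V([n]\sqcup[-a])$ is canonically identified with $\mathscr V([n+a])$. Under this identification, both $H_0(\mathbb S^a(\mathscr V))_n$ and $H_0(\mathscr V)_{n+a}$ are quotients of the same object $\mathscr V([n]\sqcup[-a])$ in $Mod-\mathcal R$; the task is then to show that the subobject we mod out by in the first is contained in the subobject we mod out by in the second.

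Unwinding the definition in \eqref{h0}, the denominator for $H_0(\mathbb S^a(\mathscr V))_n$ is the sum $\sum_{\phi:T\hookrightarrow [n],\,|T|<n}\,\mathrm{Im}\bigl(\mathscr V(\phi\sqcup\mathrm{id}_{[-a]})\bigr)$, where each map $\phi\sqcup\mathrm{id}_{[-a]}:T\sqcup[-a]\hookrightarrow[n]\sqcup[-a]$ is injective with source of cardinality $|T|+a<n+a$. On the other hand, the denominator for $H_0(\mathscr V)_{n+a}$ is the sum $\sum_{\psi:T'\hookrightarrow[n+a],\,|T'|<n+a}\mathrm{Im}(\mathscr V(\psi))$ taken over \emph{all} such injections. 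Since every $\phi\sqcup\mathrm{id}_{[-a]}$ is among those $\psi$ (after the chosen bijection), the first sum sits inside the second as subobjects of $\mathscr V([n]\sqcup[-a])$.

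Consequently the quotient map $\mathscr V([n]\sqcup[-a])\twoheadrightarrow H_0(\mathscr V)_{n+a}$ factors through $H_0(\mathbb S^a(\mathscr V))_n$, yielding the desired canonical epimorphism $H_0(\mathbb S^a(\mathscr V))_n\twoheadrightarrow H_0(\mathscr V)_{n+a}$. The main (though minor) subtlety to be careful about is that the map should be genuinely canonical in $\mathscr V$; this follows because both constructions are functorial in $\mathscr V$ and the inclusion of index sets of injections does not depend on $\mathscr V$, only on the chosen bijection $[n+a]\cong[n]\sqcup[-a]$. I do not see any real obstacle here — the statement is essentially bookkeeping once the key observation (that injections from the smaller set extended by the identity on $[-a]$ form a subfamily of all injections into $[n+a]$) is in place.
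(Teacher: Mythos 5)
Your proof is correct and follows essentially the same approach as the paper: both observe that the maps $\phi\sqcup\mathrm{id}_{[-a]}$ with $|T|<n$ form a subfamily of all injections into $[n+a]$ from sources of cardinality $<n+a$, and deduce the factorization of the quotient map $\mathscr V([n]\sqcup[-a])\twoheadrightarrow H_0(\mathscr V)_{n+a}$ through $H_0(\mathbb S^a(\mathscr V))_n$. You phrase this as containment of the subobjects being quotiented out, while the paper phrases it as a factorization of the defining maps onto the cokernel; these are the same observation.
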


\begin{proof}
By the definition in \eqref{h0}, we have
\begin{equation}\label{eq3.5}
\begin{array}{ll}
H_0({\mathbb S}^a(\mathscr V))_n&=Coker\left(\underset{
\mbox{\tiny $\begin{array}{c}\phi:T\hookrightarrow [n] \\  |T|<n \\
\end{array}$}}{\bigoplus}
{\mathbb S}^a(\mathscr V)(T)\longrightarrow{\mathbb S}^a(\mathscr V)_n\right)\\&=Coker\left(\underset{
\mbox{\tiny $\begin{array}{c}\phi\sqcup 1_{[-a]}:(T\sqcup [-a])\hookrightarrow [n+a] \\  |T|<n \\
\end{array}$}}{\bigoplus}
\mathscr V(T\sqcup [-a])\longrightarrow \mathscr V_{n+a}\right)\\
\end{array}
\end{equation} The morphism $\underset{
\mbox{\tiny $\begin{array}{c}\phi\sqcup 1_{[-a]}:(T\sqcup [-a])\hookrightarrow [n+a] \\  |T|<n \\
\end{array}$}}{\bigoplus}
\mathscr V(T\sqcup [-a])\longrightarrow \mathscr V_{n+a}$ appearing in \eqref{eq3.5} factors through the canonical morphism $\underset{
\mbox{\tiny $\begin{array}{c}\phi:T\hookrightarrow [n+a] \\  |T|<n+a \\
\end{array}$}}{\bigoplus}
\mathscr V(T)\longrightarrow\mathscr V_{n+a}$ which gives us a factorization
\begin{equation}\label{eq3.6}
\mathscr V_{n+a}\longrightarrow H_0({\mathbb S}^a(\mathscr V))_n\longrightarrow H_0(\mathscr V)_{n+a}
\end{equation} of the canonical epimorphism $\mathscr V_{n+a}\longrightarrow H_0(\mathscr V)_{n+a}$. It follows that $ H_0({\mathbb S}^a(\mathscr V))_n\longrightarrow H_0(\mathscr V)_{n+a}$ is an epimorphism.
\end{proof}

\begin{thm}\label{P3.3} Fix $a\geq 0$. Suppose that $\mathscr V\in FI_{\mathcal R}$ is such that ${\mathbb S}^a(\mathscr V)$ is generated in degree $\leq d$. Then, $\mathscr V$ is generated
in degree $\leq a+d$. 

\end{thm}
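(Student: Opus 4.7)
The plan is to reduce the statement to the vanishing criterion for $H_0$ given in Proposition \ref{P2.7} and then exploit the surjection already built in Lemma \ref{L3.2}. By Proposition \ref{P2.7}, the hypothesis that $\mathbb S^a(\mathscr V)$ is generated in degree $\leq d$ is equivalent to the condition
\begin{equation*}
H_0(\mathbb S^a(\mathscr V))_n = 0 \qquad \text{for all } n > d.
\end{equation*}

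Next, I would invoke Lemma \ref{L3.2}, which supplies, for each $n \geq 0$, an epimorphism
\begin{equation*}
H_0(\mathbb S^a(\mathscr V))_n \longrightarrow H_0(\mathscr V)_{n+a}
\end{equation*}
in $Mod-\mathcal R$. Substituting the vanishing obtained above, for every $n > d$ the target $H_0(\mathscr V)_{n+a}$ receives an epimorphism from the zero object and is therefore itself zero. Re-indexing by $m = n+a$, this is precisely the statement that $H_0(\mathscr V)_m = 0$ for every $m > a+d$.

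Finally, applying the converse direction of Proposition \ref{P2.7} with degree bound $a+d$ in place of $d$, this vanishing of $H_0(\mathscr V)$ above level $a+d$ is equivalent to $\mathscr V$ being generated in degree $\leq a+d$, which is the desired conclusion. There is no genuine obstacle here: the argument is purely formal once Lemma \ref{L3.2} and Proposition \ref{P2.7} are in place, and the only thing to be careful about is the index shift (the epimorphism of Lemma \ref{L3.2} compares level $n$ on the shifted side with level $n+a$ on the original side, which is exactly why the degree bound increases by $a$).
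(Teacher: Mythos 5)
Your argument is correct and follows exactly the route taken in the paper: it invokes Lemma \ref{L3.2} to obtain the epimorphism $H_0(\mathbb S^a(\mathscr V))_n \twoheadrightarrow H_0(\mathscr V)_{n+a}$, deduces vanishing above degree $a+d$, and concludes via the equivalence in Proposition \ref{P2.7}. You have simply written out the details more explicitly than the paper does.
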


\begin{proof} From Lemma \ref{L3.2}, it is clear that  $H_0({\mathbb S}^a(\mathscr V))_n=0$ $\Rightarrow$ $H_0(\mathscr V)_{n+a}=0$. The result is now a consequence of the equivalent
statements in Proposition \ref{P2.7}. 

\end{proof}

We now return to the torsion theory $\tau=(\mathcal T,\mathcal F)$ on $Mod-\mathcal R$. For any object $ P$ in $Mod-\mathcal R$, we denote its torsion
subobject by $\mathcal T(P)$.  For any $\mathscr V\in FI_{\mathcal R}$ and any finite set $S$, the canonical inclusion $S\hookrightarrow S\sqcup [-a]$ induces a morphism $\mathscr V(S)\longrightarrow \mathscr V(S\sqcup [-a])$  and hence 
a morphism $\psi_a^{\mathscr V}:\mathscr V\longrightarrow {\mathbb S}^a\mathscr V$ in $FI_{\mathcal R}$. 
We now set
\begin{equation}\label{tors3}
\overline{\mathcal T}(\mathscr V)(S):=\underset{a\geq 0}{colim}\textrm{ }lim\left(
\begin{CD}\mathscr V(S)@>\psi_a^{\mathscr V}(S)>>({\mathbb S}^a\mathscr V)(S)@<<<\mathcal T( ({\mathbb S}^a\mathscr V)(S))
\end{CD}\right)
\end{equation} for each finite set $S$. It is clear that $\overline{\mathcal T}(\mathscr V)$ is an $FI$-module and that $\overline{\mathcal T}(\mathscr V)\subseteq 
\mathscr V$. 

\begin{lem}\label{Lem3.7k} Let $\mathcal R$ be  such that
$Mod-\mathcal R$ is locally noetherian. Let $\tau=(\mathcal T,\mathcal F)$   be a hereditary torsion theory on $Mod-\mathcal R$. Then, for any $\mathscr V\in FI_{\mathcal R}^{fg}$, the subobject $\overline{\mathcal T}(\mathscr V)$ belongs to the torsion
class $\overline{\mathcal T}$. 
\end{lem}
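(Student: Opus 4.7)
The plan is to verify, in order, three claims: (i) $\overline{\mathcal T}(\mathscr V)$ defines a subobject of $\mathscr V$ in $FI_{\mathcal R}$; (ii) $\overline{\mathcal T}(\mathscr V)$ is finitely generated; and (iii) $\overline{\mathcal T}(\mathscr V)(S)\in\mathcal T$ for every finite set $S$ (which is much stronger than requiring this only for $n\gg 0$).

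For (i), I would first observe that for each $a\geq 0$ and each $S$ the pullback
$$P_a(S):=\mathscr V(S)\times_{(\mathbb S^a\mathscr V)(S)}\mathcal T\bigl((\mathbb S^a\mathscr V)(S)\bigr)$$
is a subobject of $\mathscr V(S)$ in $Mod-\mathcal R$. As $a$ varies, the inclusion $[-a]\hookrightarrow [-(a+1)]$ yields canonical maps $\mathbb S^a\mathscr V\to\mathbb S^{a+1}\mathscr V$ compatible with the morphisms $\psi_a^{\mathscr V}$, and since the torsion subfunctor $\mathcal T(-)$ on $Mod-\mathcal R$ is functorial, this produces compatible inclusions $P_a\hookrightarrow P_{a+1}$ inside $\mathscr V(S)$. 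Thus the colimit in \eqref{tors3} is just the directed union of the subobjects $P_a$, and functoriality in $S$ is inherited from that of $\mathscr V$, $\mathbb S^a\mathscr V$ and $\mathcal T$, giving a well-defined subobject $\overline{\mathcal T}(\mathscr V)\subseteq\mathscr V$ in $FI_{\mathcal R}$.

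For (ii), since $Mod-\mathcal R$ is locally noetherian, Theorem \ref{Th2.8} implies that $FI_{\mathcal R}$ is locally noetherian. Therefore the finitely generated $\mathscr V$ is noetherian, and its subobject $\overline{\mathcal T}(\mathscr V)$ is automatically finitely generated.

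For (iii), the heart of the argument, fix $S$ and $a\geq 0$. By construction $P_a(S)$ embeds into $\mathcal T((\mathbb S^a\mathscr V)(S))\in\mathcal T$, and since $\mathcal T$ is hereditary, $P_a(S)\in\mathcal T$. Then
$$\overline{\mathcal T}(\mathscr V)(S)=\underset{a\geq 0}{\varinjlim}\,P_a(S)$$
is a filtered colimit of objects of $\mathcal T$; as a torsion class in a Grothendieck category is closed under coproducts and quotients, hence under all colimits, we conclude $\overline{\mathcal T}(\mathscr V)(S)\in\mathcal T$. In particular $\overline{\mathcal T}(\mathscr V)_n\in\mathcal T$ for every $n\geq 0$, so trivially for $n\gg 0$, and combined with (ii) this gives $\overline{\mathcal T}(\mathscr V)\in\overline{\mathcal T}$.

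The main obstacle is the bookkeeping in step (i): verifying that the pullbacks $P_a$ fit into a genuine filtered system of $FI$-submodules of $\mathscr V$ compatible with the $FI$-structure. Once this is in place, heredity of $\mathcal T$ together with the closure of $\mathcal T$ under filtered colimits disposes of (iii) immediately, and the locally noetherian hypothesis disposes of (ii).
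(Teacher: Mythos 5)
Your steps (i) and (ii) are fine, and together with the paper's definition of $\overline{\mathcal T}$ they reduce the lemma to showing that $\overline{\mathcal T}(\mathscr V)_n \in \mathcal T$ for $n \gg 0$. But step (iii), where you try to prove the stronger statement that $\overline{\mathcal T}(\mathscr V)(S) \in \mathcal T$ for \emph{every} finite set $S$, contains a genuine error.

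The mistake is the claim that $P_a(S)$ embeds into $\mathcal T\bigl((\mathbb S^a\mathscr V)(S)\bigr)$. In the pullback square defining $P_a(S)$, the map $P_a(S) \longrightarrow \mathcal T\bigl((\mathbb S^a\mathscr V)(S)\bigr)$ has kernel equal to $\operatorname{Ker}\bigl(\psi_a^{\mathscr V}(S)\bigr)$: anything killed by $\psi_a^{\mathscr V}(S)$ lands in $0 \subseteq \mathcal T\bigl((\mathbb S^a\mathscr V)(S)\bigr)$, hence lies in the pullback, hence in the kernel of the top map. Since $\psi_a^{\mathscr V}(S)$ (induced by $S \hookrightarrow S \sqcup [-a]$) is not injective in general, $P_a(S)$ need not be a subobject of a torsion object, and heredity does not apply. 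The stronger conclusion you draw is in fact false: take $\mathcal T = 0$ (the trivial hereditary torsion class) and let $\mathscr V$ be the finitely generated $FI$-module with $\mathscr V_0 = H_r$ and $\mathscr V_n = 0$ for $n > 0$. Then $\psi_a^{\mathscr V}([0]) = 0$ for $a \geq 1$, so $P_a([0]) = \mathscr V_0 \neq 0$, and $\overline{\mathcal T}(\mathscr V)_0 = \mathscr V_0 \notin \mathcal T = \{0\}$. The correct conclusion, $\overline{\mathcal T}(\mathscr V)_n \in \mathcal T$ for $n \gg 0$, genuinely requires finite generation.

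The paper's argument uses noetherianity (your step (ii)) more seriously: since $\mathscr W := \overline{\mathcal T}(\mathscr V)$ is noetherian, the increasing chain of subobjects defining it stabilizes, so there is a single $a$ such that $\mathscr W(S)$ equals the pullback $P_b(S)$ for all $b \geq a$ and all $S$. This stabilization forces $\operatorname{Im}\bigl(\psi_b^{\mathscr W}(S)\bigr)$ to land inside $\mathcal T\bigl(\mathscr V(S \sqcup [-b])\bigr)$, so by heredity $\operatorname{Im}\bigl(\mathscr W(\phi)\bigr) \in \mathcal T$ whenever $\phi$ raises cardinality by at least $a$. Finally, because $\mathscr W$ is generated in some degree $\leq d$, every $\mathscr W_n$ with $n > a + d$ is a sum of such images, hence lies in $\mathcal T$. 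Your proof jumps over all of this; you would need to repair step (iii) by replacing it with an argument of this shape.
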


\begin{proof}
Since $\mathscr V$ is finitely generated and $\overline{\mathcal T}(\mathscr V)\subseteq 
\mathscr V$, we know that $\overline{\mathcal T}(\mathscr V)$ is finitely generated and hence noetherian. As such, the increasing chain appearing in the definition of
$\overline{\mathcal T}(\mathscr V)$ in \eqref{tors3} must be stationary. In other words, we can find $a>0$ such that
\begin{equation}\label{pa3.10}
\begin{array}{ll}
\overline{\mathcal T}(\mathscr V)(S)&=lim\left(
\begin{CD}\mathscr V(S)@>\psi_a^{\mathscr V}(S)>>\mathscr V(S\sqcup [-a])@<<<\mathcal T( \mathscr V(S\sqcup [-a]))
\end{CD}\right)\\
&=lim\left(
\begin{CD}\mathscr V(S)@>\psi_b^{\mathscr V}(S)>>\mathscr V(S\sqcup [-b])@<<<\mathcal T( \mathscr V(S\sqcup [-b]))
\end{CD}\right)\\
\end{array}
\end{equation} for every $b\geq a$ and every finite set $S$. For the sake of convenience, we put $\mathscr W:=\overline{\mathcal T}(\mathscr V)$. The morphism $
\begin{CD}\mathscr W(S) @>>> \mathscr V(S) @>\psi_b^{\mathscr V}(S)>> \mathscr V(S\sqcup [-b])\end{CD}$ factors through
$\psi_b^{\mathscr W}:\mathscr W(S) \longrightarrow \mathscr W(S\sqcup [-b])$ as well as the subobject $\mathcal T( \mathscr V(S\sqcup [-b]))\subseteq 
\mathscr V(S\sqcup [-b])$. Since $\mathscr W(S\sqcup [-b])\subseteq \mathscr V(S\sqcup [-b])$ and $\tau$ is hereditary, it follows that 
\begin{equation}\label{pa3.11}
Im(\psi_b^{\mathscr W}(S):\mathscr W(S) \longrightarrow \mathscr W(S\sqcup [-b]))\in \mathcal T
\end{equation} We now consider any morphism $\phi:S\longrightarrow S'$ in $FI$ with $|S'|-|S|=b\geq a$. Choosing a bijection between $S\sqcup [-b]$ and $S'$, we obtain  a commutative
diagram 
\begin{equation}\label{pa3.12}
\begin{tikzcd}
{} & \mathscr W(S\sqcup [-b]) \arrow{dr}{\cong} \\
\mathscr W(S) \arrow{ur}{\psi_b^{\mathscr W}(S)} \arrow{rr}{\mathscr W(\phi)} && \mathscr W(S')
\end{tikzcd}
\end{equation} Combining \eqref{pa3.11} and \eqref{pa3.12}, we see that
\begin{equation}\label{pa3.13}
Im(\mathscr W(\phi):\mathscr W(S) \longrightarrow \mathscr W(S'))\in \mathcal T\qquad \forall\textrm{ }\phi:S\longrightarrow S', \textrm{ }|S'|-|S|=b\geq a
\end{equation} Since $\mathscr W$ is finitely generated, we can choose some $d$ such that $\mathscr W$ is finitely generated in degree $d$. By Proposition \ref{P2.7}, we see that
$H_0(\mathscr W)_n=0$ for $n>d$. From the definition of $H_0(\mathscr W)$ in \eqref{h0}, it is clear that
\begin{equation}\label{pa3.14}
\mathscr W_n=\left(\underset{
\mbox{\tiny $\begin{array}{c}\phi:S\hookrightarrow [n] \\  |S|\leq d \\
\end{array}$}}{\sum}Im(\mathscr W(\phi):\mathscr W(S)\longrightarrow \mathscr W_n)\right)\qquad \forall\textrm{ }n>d
\end{equation} Combining \eqref{pa3.13} and \eqref{pa3.14}, we see that for $n>a+d$, we must have $\mathscr W_n\in \mathcal T$. Hence, $\mathscr W\in \overline{\mathcal T}$. 
\end{proof}

\begin{Thm}\label{P3.8cf} Let $\mathcal R$ be  such that
$Mod-\mathcal R$ is locally noetherian.  Let $\tau=(\mathcal T,\mathcal F)$   be a hereditary torsion theory on $Mod-\mathcal R$.  Then, for any $\mathscr V\in FI_{\mathcal R}^{fg}$, the torsion subobject of $\mathscr V$ with respect to the torsion
class $\overline{\mathcal T}$ is given by $\overline{\mathcal T}(\mathscr V)$. 
\end{Thm}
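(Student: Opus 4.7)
The plan is to prove this by combining the already-established Lemma \ref{Lem3.7k} with a maximality argument. Recall that Lemma \ref{Lem3.7k} tells us that $\overline{\mathcal T}(\mathscr V) \subseteq \mathscr V$ is a finitely generated $FI$-submodule lying in $\overline{\mathcal T}$. Therefore, to identify it as \emph{the} torsion subobject, it only remains to verify maximality: every subobject $\mathscr W \subseteq \mathscr V$ that lies in $\overline{\mathcal T}$ must be contained in $\overline{\mathcal T}(\mathscr V)$.

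Fix such a $\mathscr W \subseteq \mathscr V$ with $\mathscr W \in \overline{\mathcal T}$, and choose $N \geq 0$ such that $\mathscr W_n \in \mathcal T$ for all $n \geq N$. To prove $\mathscr W \subseteq \overline{\mathcal T}(\mathscr V)$, we fix a finite set $S$ and an element $w \in \mathscr W(S)$, and show that $w \in \overline{\mathcal T}(\mathscr V)(S)$ using the colimit description in \eqref{tors3}. Pick any $a \geq 0$ with $|S| + a \geq N$. Because $\mathscr W$ is a subfunctor of $\mathscr V$, the morphism $\psi_a^{\mathscr V}(S)$ restricts to $\psi_a^{\mathscr W}(S): \mathscr W(S) \to \mathscr W(S\sqcup [-a])$, and the image of $w$ under $\psi_a^{\mathscr V}(S)$ lies in $\mathscr W(S \sqcup [-a]) \cong \mathscr W_{|S|+a}$, which is by choice a subobject of $\mathscr V(S \sqcup [-a])$ that belongs to $\mathcal T$.

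Since $\tau$ is hereditary, any subobject of $\mathscr V(S \sqcup [-a])$ lying in $\mathcal T$ is automatically contained in the torsion subobject $\mathcal T(\mathscr V(S \sqcup [-a]))$. Consequently $\psi_a^{\mathscr V}(S)(w) \in \mathcal T(\mathscr V(S \sqcup [-a]))$, so $w$ lies in the pullback
\begin{equation*}
\lim\left(\begin{CD}\mathscr V(S) @>\psi_a^{\mathscr V}(S)>> \mathscr V(S \sqcup [-a]) @<<< \mathcal T(\mathscr V(S \sqcup [-a]))\end{CD}\right),
\end{equation*}
which is one of the terms in the filtered colimit defining $\overline{\mathcal T}(\mathscr V)(S)$. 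Thus $w \in \overline{\mathcal T}(\mathscr V)(S)$, establishing $\mathscr W(S) \subseteq \overline{\mathcal T}(\mathscr V)(S)$ for every $S$ and hence $\mathscr W \subseteq \overline{\mathcal T}(\mathscr V)$.

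I do not anticipate a serious obstacle, since the two-step argument (membership in $\overline{\mathcal T}$ from Lemma \ref{Lem3.7k}, and maximality from heredity) is essentially forced by the definitions. The only delicate point is the identification $\mathscr W(S \sqcup [-a]) \cong \mathscr W_{|S|+a}$ (achieved by any bijection between $S \sqcup [-a]$ and $[|S|+a]$, exactly as in the proof of Lemma \ref{Lem3.7k}); together with heredity, this lets the torsion hypothesis on $\mathscr W$ propagate across the $FI$-structure and into the colimit-of-pullbacks that defines $\overline{\mathcal T}(\mathscr V)(S)$.
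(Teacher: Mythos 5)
Your argument is correct and essentially reproduces the paper's proof: Lemma \ref{Lem3.7k} supplies $\overline{\mathcal T}(\mathscr V)\in\overline{\mathcal T}$, and the maximality step (shift far enough that $\mathscr W$ becomes levelwise torsion, then pull back) is the same as the paper's use of the commutative square \eqref{tx3.16} together with the fiber square \eqref{tx3.15}. The only cosmetic difference is that you show the inclusion $\mathscr W(S)\hookrightarrow\mathscr V(S)$ lands directly in a single term of the colimit \eqref{tors3}, whereas the paper first stabilizes the colimit using the noetherianness established in Lemma \ref{Lem3.7k}; also note that the containment $\mathscr W(S\sqcup[-a])\subseteq\mathcal T(\mathscr V(S\sqcup[-a]))$ holds for any torsion theory (the torsion subobject is always the maximal torsion subobject), so heredity is not actually what is being used at that particular step.
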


\begin{proof} We set $\mathscr W=\overline{\mathcal T}(\mathscr V)$ and maintain the notation from the proof of Lemma \ref{Lem3.7k}. Then, we have $a>0$ such that
\begin{equation}\label{tx3.15}
\begin{CD}
\mathscr W(S)@>>> {\mathcal T}(\mathscr V(S\sqcup [-b]))\\
@VVV @VVV \\
\mathscr V(S) @>\psi_b^{\mathscr V}(S)>> \mathscr V(S\sqcup [-b])\\
\end{CD}
\end{equation} is a fiber square for each $b\geq a$ and each finite set $S$. Now let $\mathscr W'\subseteq \mathscr V$ be such that $\mathscr W'\in \overline{\mathcal T}$. Then, there exists $N$ such that $\mathscr W'_n\in \mathcal T$ for all $n\geq N$. For $n\geq N+a$, we consider the commutative diagram
\begin{equation}\label{tx3.16}
\begin{CD}
\mathscr W'(S)@>\psi_n^{\mathscr W'}(S)>> \mathscr W'(S\sqcup [-n]))\\
@VVV @VVV \\
\mathscr V(S) @>\psi_n^{\mathscr V}(S)>> \mathscr V(S\sqcup [-n])\\
\end{CD}
\end{equation}
Then $ \mathscr W'(S\sqcup [-n]))\in \mathcal T$ and it follows that the composed morphism $\mathscr W'(S)\longrightarrow \mathscr V(S\sqcup [-n])$ appearing
in \eqref{tx3.16} factors through $ {\mathcal T}(\mathscr V(S\sqcup [-n]))$. From the fiber square \eqref{tx3.15}, it now follows that the inclusion 
$\mathscr W'(S)\hookrightarrow \mathscr V(S)$ factors through a morphism  $\mathscr W'(S)
\longrightarrow \mathscr W(S)$. It follows that $\mathscr  W'\subseteq \mathscr W$. 

\smallskip
We have shown in Proposition \ref{xxP3.2} that $\overline{\mathcal T}$ is a torsion class in $FI_{\mathcal R}^{fg}$. From Lemma \ref{Lem3.7k} we already know that $\mathscr W\in \overline{\mathcal T}$. The reasoning above shows that $\mathscr W=\overline{\mathcal T}(\mathscr V)$ contains all torsion subobjects of $\mathscr V$ and the result follows. 
\end{proof}

We will now apply similar methods to study torsion theories in the subcategory of what we call ``shift finitely generated $FI$-modules.''

\begin{defn}\label{D3.9k} Let $\mathscr V\in FI_{\mathcal R}$. Then, we will say that $\mathscr V$ is  shift finitely generated if there exists $d\geq 0$ such that $\mathbb S^d\mathscr V$ is finitely generated. The full subcategory of shift finitely generated objects will be denoted by $FI^{sfg}_{\mathcal R}$. 
\end{defn} 

\begin{lem}\label{L3.10tp} Let $\mathscr V\in FI_{\mathcal R}$.

\smallskip
(a) If $d\geq 0$ is such that $\mathbb S^d\mathscr V$ is finitely generated, so is $\mathbb S^e\mathscr V$ for any $e\geq d$.

\smallskip
(b) If $\mathscr V$ is shift finitely generated, so is $\mathbb S^a\mathscr V$ for any $a\geq 0$. 

\smallskip
(c) Any $\mathscr V\in FI^{sfg}_{\mathcal R}$ is generated in finite degree. 

\end{lem}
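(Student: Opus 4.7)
The structural backbone I will use throughout is the semigroup identity $\mathbb S^a \circ \mathbb S^b = \mathbb S^{a+b}$, which is immediate from $\mathbb S^c \mathscr V(S) = \mathscr V(S \sqcup [-c])$ together with the choice of a bijection $(S \sqcup [-b]) \sqcup [-a] \cong S \sqcup [-(a+b)]$. I will also freely use the already noted fact that each $\mathbb S^a$ preserves all colimits and hence epimorphisms.

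For part (a), I would write $e = d + a$ and note that $\mathbb S^e \mathscr V = \mathbb S^a(\mathbb S^d \mathscr V)$, so the task reduces to showing that $\mathbb S^a$ preserves finite generation. By Proposition \ref{L2.4}(c), any finitely generated $\mathscr W$ admits an epimorphism from a finite direct sum $\bigoplus_{i=1}^k {_{d_i}\mathscr M_{r_i}}$, and applying $\mathbb S^a$ further reduces the problem to checking that each $\mathbb S^a({_{d'}\mathscr M_r})$ is finitely generated. But this is immediate from the explicit decomposition
\[
\mathbb S^a({_{d'}\mathscr M_r}) = \underset{j=0}{\overset{a}{\bigoplus}}\ {_{d'-j}\mathscr M_r}^{([j],[-a])}
\]
derived inside the proof of Proposition \ref{P3.1}, which realises $\mathbb S^a({_{d'}\mathscr M_r})$ as a finite direct sum of finitely generated $FI$-modules.

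Part (b) should then follow formally from (a): if $\mathbb S^d \mathscr V$ is finitely generated, then for any $a \geq 0$ the object $\mathbb S^d(\mathbb S^a \mathscr V) = \mathbb S^{d+a} \mathscr V$ is finitely generated by (a) applied with $e = d+a \geq d$, so $\mathbb S^a \mathscr V$ is shift finitely generated by definition.

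Part (c) is the only place where genuine input from the earlier theory is needed, and I expect it to be the main (though mild) obstacle, since one has to pass from finite generation \emph{after} shifting to a degree bound \emph{before} shifting. Given $\mathscr V \in FI_{\mathcal R}^{sfg}$, fix $d$ with $\mathbb S^d \mathscr V$ finitely generated; by parts (c) and (e) of Proposition \ref{L2.4} this forces $\mathbb S^d \mathscr V$ to be generated in degree $\leq e$ for some $e \geq 0$ (namely the maximum of the $d_i$ appearing in a finite presentation). Proposition \ref{P3.3} then immediately yields that $\mathscr V$ itself is generated in degree $\leq d + e$, completing the proof.
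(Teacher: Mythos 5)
Your proof is correct, and parts (a) and (b) coincide with the paper's argument: the paper likewise composes $\mathbb S^{e-d}$ with a finite presentation $\bigoplus_{i=1}^k{}_{d_i}\mathscr M_{r_i}\twoheadrightarrow\mathbb S^d\mathscr V$ and invokes Corollary \ref{C3.15} (which is exactly the finiteness consequence of equation \eqref{eq3.4} that you cite), and likewise derives (b) formally from (a). For part (c) you take a mildly different route: the paper proves it directly, using Proposition \ref{P2.6} to obtain $H_0(\mathbb S^d\mathscr V)_n=0$ for $n\gg0$, then the surjection $H_0(\mathbb S^d\mathscr V)_n\twoheadrightarrow H_0(\mathscr V)_{n+d}$ of Lemma \ref{L3.2}, then Proposition \ref{P2.7}; whereas you first pass from finite generation to a degree bound via Proposition \ref{L2.4}(c),(e) and then quote Proposition \ref{P3.3} as a black box. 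Since \ref{P3.3} is itself a corollary of \ref{L3.2} and \ref{P2.7}, the two arguments share all their substance, but your formulation delegates the $H_0$-bookkeeping to \ref{P3.3} and replaces the $H_0$-based detection of the degree bound (\ref{P2.6}) with the more direct reading of a finite presentation; both are equally valid, with yours being perhaps slightly cleaner in that it reuses \ref{P3.3} rather than re-deriving its content.
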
 

\begin{proof} Since  $\mathbb S^d\mathscr V$ is finitely generated, we can choose  an epimorphism of the form $\underset{i=1}{\overset{k}{\bigoplus}}\textrm{ } {_{d_i}\mathscr M_{r_i}}\longrightarrow \mathbb S^d\mathscr V$. For $e\geq d$, this induces an epimorphism $\mathbb S^{e-d}\left(\underset{i=1}{\overset{k}{\bigoplus}}\textrm{ } {_{d_i}\mathscr M_{r_i}}\right)\longrightarrow \mathbb S^e\mathscr V$. From Corollary \ref{C3.15}, we know that each $\mathbb S^{e-d} {_{d_i}\mathscr M_{r_i}}$ is finitely generated. This proves
(a). The result of (b) is clear from (a). For (c), we proceed as follows: if $\mathbb S^d\mathscr V$ is finitely generated, it follows from Proposition \ref{P2.6} that $H_0(\mathbb S^d\mathscr V)_n=0$ for $n\gg 0$. Then, the epimorphism $H_0(\mathbb S^d\mathscr V)_n\longrightarrow H_0(\mathscr V)_{n+d}$ in Lemma \ref{L3.2}  shows that $H_0(\mathscr V)_m=0$ for 
$m\gg0$. It now follows from Proposition \ref{P2.7} that $\mathscr V$ is generated in finite degree. 

\end{proof}

\begin{thm}\label{P3.11} Suppose that $Mod-\mathcal R$ is locally noetherian. Then $FI^{sfg}_{\mathcal R}$ is a Serre subcategory of $FI_{\mathcal R}$, i.e., it is closed
under subobjects, quotients and extensions.
\end{thm}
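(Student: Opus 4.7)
The plan is to exploit two ingredients already available in the paper: the shift functor $\mathbb S^a$ is exact (it preserves all limits and colimits, as noted right after its definition), and $FI_{\mathcal R}$ is locally noetherian by Theorem \ref{Th2.8}, so finitely generated objects in $FI_{\mathcal R}$ coincide with noetherian objects and therefore form a class closed under subobjects, quotients, and extensions. With these two facts in hand, each of the three Serre-closure conditions reduces to a routine diagram chase after shifting by a suitable degree.

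First I would handle subobjects. Let $\mathscr W\hookrightarrow \mathscr V$ with $\mathscr V\in FI^{sfg}_{\mathcal R}$ and choose $d\geq 0$ with $\mathbb S^d\mathscr V$ finitely generated. Since $\mathbb S^d$ preserves monomorphisms, the induced map $\mathbb S^d\mathscr W\hookrightarrow \mathbb S^d\mathscr V$ is a monomorphism; as $\mathbb S^d\mathscr V$ is noetherian in the locally noetherian category $FI_{\mathcal R}$, its subobject $\mathbb S^d\mathscr W$ is finitely generated, so $\mathscr W\in FI^{sfg}_{\mathcal R}$. Quotients are handled symmetrically: an epimorphism $\mathscr V\longrightarrow \mathscr V''$ induces an epimorphism $\mathbb S^d\mathscr V\longrightarrow \mathbb S^d\mathscr V''$, and a quotient of a finitely generated object is finitely generated, so $\mathscr V''\in FI^{sfg}_{\mathcal R}$.

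The extension case is the only step where some care is required, and here I expect to invoke Lemma \ref{L3.10tp}(a) in an essential way. Given a short exact sequence
\begin{equation*}
0\longrightarrow \mathscr V'\longrightarrow \mathscr V\longrightarrow \mathscr V''\longrightarrow 0
\end{equation*}
with $\mathscr V',\mathscr V''\in FI^{sfg}_{\mathcal R}$, I would pick $d'$ and $d''$ so that $\mathbb S^{d'}\mathscr V'$ and $\mathbb S^{d''}\mathscr V''$ are finitely generated, and then set $d:=\max(d',d'')$. Lemma \ref{L3.10tp}(a) promotes both $\mathbb S^d\mathscr V'$ and $\mathbb S^d\mathscr V''$ to finitely generated objects. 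Applying the exact functor $\mathbb S^d$ preserves short exactness, so we obtain
\begin{equation*}
0\longrightarrow \mathbb S^d\mathscr V'\longrightarrow \mathbb S^d\mathscr V\longrightarrow \mathbb S^d\mathscr V''\longrightarrow 0,
\end{equation*}
and the closure of finite generation under extensions in the locally noetherian category $FI_{\mathcal R}$ forces $\mathbb S^d\mathscr V$ to be finitely generated. Hence $\mathscr V\in FI^{sfg}_{\mathcal R}$, completing the three cases.

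The only genuine obstacle is the degree-alignment in the extension step, and that is exactly what Lemma \ref{L3.10tp}(a) was designed to handle; once one is convinced that the shift functor is exact and that the ambient category is locally noetherian, the proof is essentially immediate.
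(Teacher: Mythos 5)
Your proof is correct. The subobject and quotient cases match the paper's argument exactly: shift is exact, $FI_{\mathcal R}$ is locally noetherian by Theorem \ref{Th2.8}, and so subobjects and quotients of the finitely generated $\mathbb S^d\mathscr V$ are finitely generated. The degree alignment via Lemma \ref{L3.10tp}(a) for the extension step is also what the paper does implicitly when it says ``choose $d$ large enough.''

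Where you diverge from the paper is in how you close the extension case after obtaining the shifted short exact sequence $0\to \mathbb S^d\mathscr V'\to\mathbb S^d\mathscr V\to\mathbb S^d\mathscr V''\to 0$. You appeal to the abstract fact that finitely generated objects in a Grothendieck category are closed under extensions (which is true; local noetherianity is not even needed for that particular closure property, though you cite it). The paper instead constructs an explicit finite presentation: it picks epimorphisms $\mathscr P\to\mathbb S^d\mathscr V'$ and $\mathscr Q\to\mathbb S^d\mathscr V''$ with $\mathscr P$, $\mathscr Q$ finite direct sums of the projective generators $_d\mathscr M_r$, lifts $\mathscr Q\to\mathbb S^d\mathscr V''$ through the quotient using projectivity, and checks that $\mathscr P\oplus\mathscr Q\to\mathbb S^d\mathscr V$ is an epimorphism. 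Your route is shorter and more conceptual; the paper's is more hands-on and displays the presentation explicitly, but both land in the same place. There is no gap in your argument.
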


\begin{proof} Let $0\longrightarrow \mathscr V'\longrightarrow\mathscr V\longrightarrow \mathscr V''\longrightarrow 0$ be a short exact sequence in $FI_{\mathcal R}$. Since $\mathbb S$ is exact, this gives a short exact sequence 
\begin{equation}\label{3.17yr} 
0\longrightarrow  \mathbb S^d\mathscr V'\longrightarrow \mathbb S^d\mathscr V\longrightarrow \mathbb S^d \mathscr V''\longrightarrow 0
\end{equation} in $FI_{\mathcal R}$ for each $d\geq 0$. Since $Mod-\mathcal R$ is locally noetherian, it is clear from \eqref{3.17yr} and Theorem \ref{Th2.8} that 
$FI^{sfg}_{\mathcal R}$  is closed under quotients and subobjects. It remains to show that  $FI^{sfg}_{\mathcal R}$  is closed under extensions. We suppose that $\mathscr V'$, 
$\mathscr V''\in FI_{\mathcal R}^{sfg}$  and choose $d$ large enough so that $\mathbb S^d\mathscr V'$ and $\mathbb S^d\mathscr V''$ are finitely generated. 

\smallskip
Consequently, we can choose epimorphisms $\mathscr P\longrightarrow \mathbb S^d\mathscr V'$ and $\mathscr Q\longrightarrow \mathbb S^d\mathscr V''$, where $\mathscr P$ and $\mathscr Q$ are finite direct sums
of the family $\{_d\mathscr M_r\}_{r\in \mathcal R,d\geq 0}$ of generators of $FI_{\mathcal R}$. Using Lemma \ref{L2.3}, each object in $\{_d\mathscr M_r\}_{r\in \mathcal R,d\geq 0}$ is projective and hence the epimorphism $\mathscr Q\longrightarrow \mathbb S^d\mathscr V''$ lifts to a morphism $\mathscr Q\longrightarrow \mathbb S^d\mathscr V$. It may be easily verified that the induced morphism $\mathscr P\oplus \mathscr Q\longrightarrow \mathbb S^d\mathscr V$ is an epimorphism and the result follows. 

\end{proof}

Our next objective is to define a torsion theory on $FI^{sfg}_{\mathcal R}$ starting from a torsion theory $\tau=(\mathcal T,\mathcal F)$ on $Mod-\mathcal R$. For this, we need to identify the torsion objects  in $FI^{sfg}_{\mathcal R}$. For finitely generated objects in $FI_{\mathcal R}$, we already have that 
\begin{equation}\label{trskc1}
Ob(\overline{\mathcal T}):=\{\mbox{$\mathscr V\in Ob(FI_{\mathcal R}^{fg})$ $\vert$ $\mathscr V_n\in \mathcal T$ for $n\gg 0$}\}
\end{equation}
as defined in \eqref{trskc}.  In the case of $FI_{\mathcal R}^{sfg}$, we cannot proceed directly as in the proof of Proposition \ref{xxP3.2}, because every object in $FI^{sfg}_{\mathcal R}$ is not necessarily noetherian. For a hereditary torsion theory $\tau=(\mathcal T,\mathcal F)$, we now set
\begin{equation}\label{trskc2}
Ob(\overline{\mathcal T}^{sfg}):=\{\mbox{$\mathscr V\in Ob(FI_{\mathcal R}^{sfg})$ $\vert$ Every finitely generated $\mathscr W\subseteq \mathscr V$ lies in $\overline{\mathcal T}$}\}
\end{equation} Accordingly, we set 
\begin{equation}
Ob(\overline{\mathcal F}^{sfg}):=\{\mbox{$\mathscr V\in Ob(FI_{\mathcal R}^{sfg})$ $\vert$ $Hom(\mathscr W,\mathscr V)=0$ for every $\mathscr W\in \overline{\mathcal T}^{sfg}$ }\}
\end{equation} We will now show that $(\overline{\mathcal T}^{sfg},\overline{\mathcal F}^{sfg})$ defines a torsion theory on $FI^{sfg}_{\mathcal R}$. 

\begin{lem}\label{L3.12}  Suppose that $Mod-\mathcal R$ is locally noetherian and let $\tau=(\mathcal T,\mathcal F)$ be a hereditary torsion theory on $Mod-\mathcal R$. For $\mathscr V\in FI^{sfg}_{\mathcal R}$, let $\mathscr T$ be the sum of all finitely generated subobjects of $\mathscr V$ which lie in $\overline{\mathcal T}$. Then, $\mathscr T\in 
\overline{\mathcal T}^{sfg}$. 
\end{lem}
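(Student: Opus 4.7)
The plan is to verify the two defining conditions for $\mathscr T\in\overline{\mathcal T}^{sfg}$: that $\mathscr T$ is shift finitely generated, and that every finitely generated subobject of $\mathscr T$ lies in $\overline{\mathcal T}$. The first condition is immediate, since $\mathscr T\subseteq\mathscr V\in FI^{sfg}_{\mathcal R}$ and Proposition \ref{P3.11} shows that $FI^{sfg}_{\mathcal R}$ is closed under subobjects.

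For the second condition, the idea is to trap any finitely generated subobject $\mathscr W\subseteq\mathscr T$ inside a finite partial sum of the $\mathscr T_j$'s appearing in the description of $\mathscr T$. Using Proposition \ref{L2.4}(c), I pick an epimorphism $\bigoplus_{i=1}^{k}{_{d_i}\mathscr M_{r_i}}\twoheadrightarrow\mathscr W$ and compose with $\mathscr W\hookrightarrow\mathscr T$ to obtain morphisms $g_i:{_{d_i}\mathscr M_{r_i}}\longrightarrow\mathscr T$ for $i=1,\dots,k$. I then express $\mathscr T$ as the filtered colimit of the family $\{\mathscr T_J:=\sum_{j\in J}\mathscr T_j\}$ indexed by finite subsets $J$ of the underlying index set, whose transition maps are all monomorphisms. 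Since each ${_{d_i}\mathscr M_{r_i}}$ is finitely generated (Proposition \ref{L2.4}(a)), each $g_i$ must factor through some $\mathscr T_{J_i}$; setting $J:=\bigcup_{i}J_i$ (still finite) I conclude that the entire epimorphism factors through the inclusion $\mathscr T_J\hookrightarrow\mathscr T$, and since this composition has image $\mathscr W$, I obtain $\mathscr W\subseteq\mathscr T':=\mathscr T_J$.

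To close the argument, I observe that $\mathscr T'=\sum_{j\in J}\mathscr T_j$ is a quotient of the finite direct sum $\bigoplus_{j\in J}\mathscr T_j$, which lies in $\overline{\mathcal T}$ by iterated use of the closure of $\overline{\mathcal T}$ under extensions (Proposition \ref{xxP3.2}). Closure of $\overline{\mathcal T}$ under quotients then gives $\mathscr T'\in\overline{\mathcal T}$, and because $\overline{\mathcal T}$ is hereditary (again by Proposition \ref{xxP3.2}, using the heredity of $\mathcal T$), the inclusion $\mathscr W\subseteq\mathscr T'$ yields $\mathscr W\in\overline{\mathcal T}$.

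The main obstacle is the finite-trapping step in the second paragraph: writing $\mathscr T$ as a filtered union of finite partial sums connected by monomorphisms, and then invoking the finite generation of the generators ${_{d_i}\mathscr M_{r_i}}$ to force the morphisms $g_i$ to factor through a single finite subsum. Once the inclusion $\mathscr W\subseteq\mathscr T'$ is established, the remainder is a formal consequence of the closure properties of $\overline{\mathcal T}$ recorded in Proposition \ref{xxP3.2}.
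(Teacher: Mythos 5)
Your proof is correct and follows the paper's argument: express $\mathscr T$ as a filtered colimit of finite partial sums $\sum_{j\in J}\mathscr T_j$ connected by inclusions, trap any finitely generated $\mathscr W\subseteq\mathscr T$ inside one such finite sum, and conclude using the closure of $\overline{\mathcal T}$ under extensions, quotients and subobjects (Proposition~\ref{xxP3.2}). The only small difference is that you trap $\mathscr W$ by passing to an epimorphism from projective generators and factoring each $g_i$; the paper invokes the trapping more directly, since ``finitely generated'' by definition means $Hom(\mathscr W,\_\_)$ preserves filtered colimits of monomorphisms, so $\mathscr W\hookrightarrow\mathscr T$ itself factors through some $\sum_{j\in J}\mathscr T_j$ without the detour through the $_{d_i}\mathscr M_{r_i}$.
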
 

\begin{proof} Let $\mathscr T=\sum_{i\in I}\mathscr T_i$, where  $\{\mathscr T_i\}_{i\in I}$ is the collection  of all finitely generated subobjects of $\mathscr V$ which lie in $\overline{\mathcal T}$. Then, we can express $\mathscr T$ as the filtered colimit $\mathscr T=\underset{J\in Fin(I)}{\varinjlim}\textrm{ }\underset{j\in J}{\sum}\mathscr T_j$, where $Fin(I)$ is the collection
of finite subsets of $I$. We now consider some finitely generated $\mathscr T'\subseteq \mathscr T$. Then, there exists some finite $J\subseteq I$ such that $\mathscr T'\subseteq
\underset{j\in J}{\sum}\mathscr T_j$, i.e., we have a monomorphism $\mathscr T'\hookrightarrow Im\left(\underset{j\in J}{\bigoplus}\mathscr T_j\longrightarrow \mathscr V\right)$. Since $\mathcal T$ is a hereditary torsion class, it follows from Proposition \ref{xxP3.2} that $\overline{\mathcal T}$ is closed under extensions, quotients and subobjects. Since each $\mathscr T_j\in \overline{\mathcal T}$, it is now clear
that $\mathscr T'\in \overline{\mathcal T}$. Hence, $\mathscr T\in \overline{\mathcal T}^{sfg}$. 

\end{proof}

\begin{thm}\label{P3.13}  Suppose that $Mod-\mathcal R$ is locally noetherian and let $\tau=(\mathcal T,\mathcal F)$ be a hereditary torsion theory on $Mod-\mathcal R$. Then
$\overline{\mathcal T}^{sfg}$ is a hereditary torsion class in $FI^{sfg}_{\mathcal R}$. 
\end{thm}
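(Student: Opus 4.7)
The statement asks for two things: first, that $(\overline{\mathcal T}^{sfg}, \overline{\mathcal F}^{sfg})$ satisfies the torsion pair axioms on $FI^{sfg}_{\mathcal R}$, and second, that $\overline{\mathcal T}^{sfg}$ is hereditary. Since $FI^{sfg}_{\mathcal R}$ is not cocomplete (arbitrary direct sums of shift finitely generated modules need not be shift finitely generated), Proposition \ref{xwP3.1} does not apply verbatim, so I will verify directly that $\overline{\mathcal T}^{sfg}$ is closed under subobjects, quotients, and extensions within $FI^{sfg}_{\mathcal R}$, and then explicitly construct the torsion subobject and verify the Hom-vanishing condition.

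\emph{Closure properties.} Closure under subobjects is immediate: if $\mathscr V'\hookrightarrow\mathscr V\in\overline{\mathcal T}^{sfg}$ with $\mathscr V'\in FI^{sfg}_{\mathcal R}$, any finitely generated $\mathscr W\subseteq\mathscr V'$ is also a finitely generated subobject of $\mathscr V$, hence lies in $\overline{\mathcal T}$. For closure under quotients, given $\pi:\mathscr V\twoheadrightarrow\mathscr V''$ with $\mathscr V\in\overline{\mathcal T}^{sfg}$ and a finitely generated $\mathscr W\subseteq\mathscr V''$, I would pick finitely many generators $f_1,\dots,f_k\in el(\mathscr W)$ (Proposition \ref{L2.4}(d)), lift each to $g_i\in el(\mathscr V)$ using that evaluation at any $r\in\mathcal R$ is right exact in $Mod-\mathcal R$, and take $\mathscr W'\subseteq\mathscr V$ to be the finitely generated subobject they generate; then $\pi(\mathscr W')=\mathscr W$, and since $\mathscr W'\in\overline{\mathcal T}$, so is its quotient $\mathscr W$ by Proposition \ref{xxP3.2}. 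For closure under extensions, given $0\to\mathscr V'\to\mathscr V\to\mathscr V''\to 0$ with $\mathscr V',\mathscr V''\in\overline{\mathcal T}^{sfg}$ (so $\mathscr V\in FI^{sfg}_{\mathcal R}$ by Proposition \ref{P3.11}) and $\mathscr W\subseteq\mathscr V$ finitely generated, the exact sequence
\begin{equation*}
0\longrightarrow\mathscr W\cap\mathscr V'\longrightarrow\mathscr W\longrightarrow\mathscr W/(\mathscr W\cap\mathscr V')\longrightarrow 0
\end{equation*}
has both ends in $\overline{\mathcal T}$: the left term is finitely generated since $\mathscr W$ is noetherian and sits inside $\mathscr V'$, while the right term is a finitely generated subobject of $\mathscr V''$. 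Closure under extensions of $\overline{\mathcal T}$ then places $\mathscr W$ in $\overline{\mathcal T}$.

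\emph{Construction of the torsion pair.} For $\mathscr V\in FI^{sfg}_{\mathcal R}$, define $\mathscr T(\mathscr V)$ to be the sum of all finitely generated subobjects of $\mathscr V$ lying in $\overline{\mathcal T}$; by Lemma \ref{L3.12} and the fact that $FI^{sfg}_{\mathcal R}$ is closed under subobjects, $\mathscr T(\mathscr V)\in\overline{\mathcal T}^{sfg}$. To see $\mathscr V/\mathscr T(\mathscr V)\in\overline{\mathcal F}^{sfg}$, suppose for contradiction there is a nonzero morphism $\phi:\mathscr U\to\mathscr V/\mathscr T(\mathscr V)$ with $\mathscr U\in\overline{\mathcal T}^{sfg}$; its image $\mathscr U'$ is a nonzero quotient of $\mathscr U$, hence in $\overline{\mathcal T}^{sfg}$ by the closure just established. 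Pick a nonzero element of some $\mathscr U'(n)(r)$ to obtain a nonzero finitely generated subobject $\mathscr U_0\subseteq\mathscr U'$, which lies in $\overline{\mathcal T}$. Lifting a finite set of generators as before produces a finitely generated $\mathscr V_0\subseteq\mathscr V$ with $\pi(\mathscr V_0)=\mathscr U_0$. The exact sequence $0\to\mathscr V_0\cap\mathscr T(\mathscr V)\to\mathscr V_0\to\mathscr U_0\to 0$ has $\mathscr V_0\cap\mathscr T(\mathscr V)$ finitely generated (again by noetherianity) and inside $\mathscr T(\mathscr V)\in\overline{\mathcal T}^{sfg}$, hence in $\overline{\mathcal T}$; closure under extensions gives $\mathscr V_0\in\overline{\mathcal T}$, so $\mathscr V_0\subseteq\mathscr T(\mathscr V)$ by the very definition of $\mathscr T(\mathscr V)$, contradicting $\pi(\mathscr V_0)=\mathscr U_0\ne 0$. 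This yields the required short exact sequence and the Hom-vanishing condition.

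\emph{Main obstacle.} The delicate point is that $FI^{sfg}_{\mathcal R}$ is not closed under arbitrary coproducts, so the usual recipe for a torsion class in a Grothendieck category must be replaced by a direct verification. This forces the repeated use of the lifting trick (realizing a finitely generated subobject in the quotient as the image of a finitely generated subobject upstairs) and the noetherianity of finitely generated modules, both of which rely crucially on the local noetherian hypothesis on $Mod-\mathcal R$. Hereditariness of $\overline{\mathcal T}^{sfg}$ is subsumed in the closure-under-subobjects step.
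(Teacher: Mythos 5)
Your proof is correct and follows essentially the same route as the paper: the torsion subobject is taken to be the sum of all finitely generated subobjects lying in $\overline{\mathcal T}$ (invoking Lemma \ref{L3.12}), and the torsion-freeness of the quotient is established by the same ``climbing'' argument -- any finitely generated subobject $\mathscr V_0$ of $\mathscr V$ mapping onto a finitely generated torsion subobject of $\mathscr V/\mathscr T(\mathscr V)$ is itself forced into $\overline{\mathcal T}$ via the extension $0\to\mathscr V_0\cap\mathscr T(\mathscr V)\to\mathscr V_0\to\mathscr U_0\to 0$, hence must already lie inside $\mathscr T(\mathscr V)$. The only cosmetic differences are that you phrase the last step as a proof by contradiction where the paper argues directly that every restriction $f|_{\mathscr X'}$ vanishes, and that you separately verify closure of $\overline{\mathcal T}^{sfg}$ under quotients and extensions up front (correct but not strictly needed, since those closure properties follow once the torsion pair is exhibited).
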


\begin{proof} From the definition in \eqref{trskc2}, it is clear that $\overline{\mathcal T}^{sfg}$  is closed under subobjects. We take $\mathscr V\in FI^{sfg}_{\mathcal R}$
and let $\mathscr T\subseteq \mathscr V$ be as in the proof of Lemma \ref{L3.12}. From Lemma \ref{L3.12}, we know that $\mathscr T\in \overline{\mathcal T}^{sfg}$. It suffices
therefore to show that $\mathscr V/\mathscr T\in \overline{\mathcal F}^{sfg}$

\smallskip
We consider therefore a morphism $f:\mathscr X\longrightarrow \mathscr V/\mathscr T$ with $\mathscr X\in \overline{\mathcal T}^{sfg}$. Since $FI_{\mathcal R}$ is locally finitely
generated, we can show that $f=0$ by verifying that $f'=f|_{\mathscr X'}: \mathscr X'\longrightarrow \mathscr V/\mathscr T$ is zero for every finitely generated $\mathscr X'
\subseteq \mathscr X$. First, we note that we can write $Im(f': \mathscr X'\longrightarrow \mathscr V/\mathscr T)=\mathscr Y/\mathscr T$ where $\mathscr T\subseteq 
\mathscr Y\subseteq \mathscr V$.  Since $\mathscr X\in \overline{\mathcal T}^{sfg}$, it follows that $\mathscr X'\in \overline{\mathcal T}$. Since $\overline{\mathcal T}$ is closed
under quotients, it follows that $\mathscr Y/\mathscr T\in \overline{\mathcal T}$. 

\smallskip
We now consider a finitely generated subobject $\mathscr Z\subseteq \mathscr Y$. Since $\overline{\mathcal T}$ is closed under subobjects, we get $\mathscr Y/\mathscr T
\supseteq (\mathscr Z+\mathscr T)/\mathscr T=\mathscr Z/(\mathscr Z\cap \mathscr T)\in \overline{\mathcal T}$. On the other hand, since $FI_{\mathcal R}$ is locally noetherian, 
we know that $\mathscr Z\cap\mathscr T\subseteq \mathscr Z$ must be finitely generated. Now since $\mathscr Z\cap\mathscr T\subseteq \mathscr T$ and $\mathscr T
\in \overline{\mathcal T}^{sfg}$, it follows that $\mathscr Z\cap\mathscr T\in \overline{\mathcal T}$. Since $\overline{\mathcal T}$ is closed under extensions, the short exact sequence
\begin{equation}
0\longrightarrow \mathscr Z\cap\mathscr T\longrightarrow \mathscr Z\longrightarrow \mathscr Z/\mathscr Z\cap\mathscr T\longrightarrow 0
\end{equation}
gives $\mathscr Z\in \overline{\mathcal T}$. From the definition of $\mathscr T$, it now follows that $\mathscr Z\subseteq \mathscr T$. Again since $FI_{\mathcal R}$ is locally finitely
generated, this gives $\mathscr Y\subseteq \mathscr T$. Hence, $f'=0$. This proves the result. 

\end{proof} 

We will now compute an expression for the torsion submodule $\overline{\mathcal T}^{sfg}(\mathscr V)$ of  $\mathscr V\in FI^{sfg}_{\mathcal R}$. This will be done in several steps. We denote by  $fg(\mathscr V)$
the collection of finitely generated subobjects of $\mathscr V$.

\begin{thm}\label{P3.14}  Suppose that $Mod-\mathcal R$ is locally noetherian and let $\tau=(\mathcal T,\mathcal F)$ be a hereditary torsion theory on $Mod-\mathcal R$. 

\smallskip

 (a) Suppose $\mathscr V\in FI^{fg}_{\mathcal R}$. Then, $\overline{\mathcal T}(\mathscr V)=\overline{\mathcal T}^{sfg}(\mathscr V)$.

\smallskip
(b) For $\mathscr V\in FI^{sfg}_{\mathcal R}$, we have $\overline{\mathcal T}^{sfg}(\mathscr V)=\underset{\mathscr V'\in fg(\mathscr V)}{\varinjlim}\textrm{ }\overline{\mathcal T}^{sfg}(\mathscr V')=\underset{\mathscr V'\in fg(\mathscr V)}{\varinjlim}\textrm{ }\overline{\mathcal T}(\mathscr V')$.

\end{thm}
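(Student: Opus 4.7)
My plan is to handle (a) first and then reduce (b) to (a) together with the explicit description of $\overline{\mathcal T}^{sfg}(\mathscr V)$ coming from Lemma \ref{L3.12}.

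For part (a), suppose $\mathscr V\in FI^{fg}_{\mathcal R}$. I will show two inclusions of subobjects of $\mathscr V$. For $\overline{\mathcal T}(\mathscr V)\subseteq \overline{\mathcal T}^{sfg}(\mathscr V)$, I use that $\overline{\mathcal T}(\mathscr V)\in \overline{\mathcal T}$ together with Proposition \ref{xxP3.2}: since $\overline{\mathcal T}$ is hereditary, every subobject of $\overline{\mathcal T}(\mathscr V)$ lies in $\overline{\mathcal T}$, so in particular every finitely generated subobject does, showing $\overline{\mathcal T}(\mathscr V)\in \overline{\mathcal T}^{sfg}$. Hence it is contained in the torsion subobject $\overline{\mathcal T}^{sfg}(\mathscr V)$. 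For the reverse inclusion, note that $\mathscr V$ is noetherian because $FI_{\mathcal R}$ is locally noetherian, so the subobject $\overline{\mathcal T}^{sfg}(\mathscr V)\subseteq \mathscr V$ is itself finitely generated. Being a finitely generated subobject of itself that lies in $\overline{\mathcal T}^{sfg}$, it is in $\overline{\mathcal T}$ by \eqref{trskc2}, and therefore sits inside $\overline{\mathcal T}(\mathscr V)$.

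For part (b), the second equality is an immediate consequence of (a) applied to each $\mathscr V'\in fg(\mathscr V)$. For the first equality, I would use that $fg(\mathscr V)$ is filtered under inclusion (closed under finite sums inside $\mathscr V$) and all transition maps are monomorphisms, so the colimit is simply the sum $\sum_{\mathscr V'\in fg(\mathscr V)}\overline{\mathcal T}(\mathscr V')$ computed inside $\mathscr V$. By Lemma \ref{L3.12} (and the proof of Proposition \ref{P3.13}), we already know that $\overline{\mathcal T}^{sfg}(\mathscr V)$ equals $\mathscr T:=\sum_i \mathscr T_i$, where $\{\mathscr T_i\}$ is the collection of all finitely generated subobjects of $\mathscr V$ belonging to $\overline{\mathcal T}$. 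One inclusion is then immediate because each $\overline{\mathcal T}(\mathscr V')$ is a finitely generated subobject of $\mathscr V$ lying in $\overline{\mathcal T}$, hence one of the $\mathscr T_i$. For the opposite inclusion, any finitely generated $\mathscr T_i\subseteq \mathscr V$ with $\mathscr T_i\in \overline{\mathcal T}$ satisfies $\mathscr T_i=\overline{\mathcal T}(\mathscr T_i)$ (since its torsion subobject must be itself) and $\mathscr T_i\in fg(\mathscr V)$, so $\mathscr T_i$ appears in the colimit.

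I do not anticipate a significant obstacle: the noetherian hypothesis immediately promotes any subobject of a finitely generated module to being finitely generated, which is exactly what collapses $\overline{\mathcal T}^{sfg}$ onto $\overline{\mathcal T}$ in the finitely generated case. The only care needed is in (b), to recognize the colimit as the internal sum and to use the description of $\overline{\mathcal T}^{sfg}(\mathscr V)$ supplied by Lemma \ref{L3.12} rather than trying to manipulate \eqref{trskc2} directly.
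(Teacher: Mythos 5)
Your proof is correct and follows essentially the same path as the paper's. In part (a) you establish both inclusions exactly as the paper does: the easy direction uses that $\overline{\mathcal T}(\mathscr V)\in\overline{\mathcal T}^{sfg}$ (hence contained in the torsion subobject), and the reverse direction uses that $\overline{\mathcal T}^{sfg}(\mathscr V)$ is itself finitely generated by local noetherianity, hence belongs to $\overline{\mathcal T}$ and so lies inside $\overline{\mathcal T}(\mathscr V)$. In part (b), where the paper argues directly by taking an arbitrary finitely generated subobject $\mathscr W\subseteq\overline{\mathcal T}^{sfg}(\mathscr V)$ and noting $\mathscr W=\overline{\mathcal T}^{sfg}(\mathscr W)$ so that $\mathscr W$ lands in the colimit, you instead pass through the explicit identification $\overline{\mathcal T}^{sfg}(\mathscr V)=\sum_i\mathscr T_i$ supplied by Lemma \ref{L3.12} and the proof of Proposition \ref{P3.13}; this is a cosmetic rephrasing of the same idea (the colimit over $fg(\mathscr V)$ of the torsion subobjects is the internal sum of all $\mathscr T_i$), and your bookkeeping that each $\overline{\mathcal T}(\mathscr V')$ is one of the $\mathscr T_i$ and conversely each $\mathscr T_i=\overline{\mathcal T}(\mathscr T_i)$ appears in the colimit is valid.
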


\begin{proof} (a)  It is immediate that $\overline{\mathcal T}(\mathscr V)\subseteq \overline{\mathcal T}^{sfg}(\mathscr V)$.  Since $\mathscr V$ is finitely generated, we know that $\overline{\mathcal T}^{sfg}(\mathscr V)$ is finitely generated. Since $ \overline{\mathcal T}^{sfg}(\mathscr V)\in \overline{\mathcal T}^{sfg}$, it follows that $\overline{\mathcal T}^{sfg}(\mathscr V)\in \overline{\mathcal T}$. Hence,
 $\overline{\mathcal T}^{sfg}(\mathscr V)\subseteq \overline{\mathcal T}(\mathscr V) $ and the result follows.
 
 \smallskip
 (b) It is clear that $\underset{\mathscr V'\in fg(\mathscr V)}{\varinjlim}\textrm{ }\overline{\mathcal T}^{sfg}(\mathscr V')\subseteq
 \overline{\mathcal T}^{sfg}(\mathscr V)$.  Conversely, we consider any finitely generated subobject $\mathscr W\subseteq \overline{\mathcal T}^{sfg}(\mathscr V)$. Then, $\mathscr W\in 
 \overline{\mathcal T}$ and hence $\mathscr W=\overline{\mathcal T}(\mathscr W)=\overline{\mathcal T}^{sfg}(\mathscr W)$. It follows that $\mathscr W\subseteq \underset{\mathscr V'\in fg(\mathscr V)}{\varinjlim}\textrm{ }\overline{\mathcal T}^{sfg}(\mathscr V')$. Since $FI_{\mathcal R}$ is locally finitely generated, we get $ \overline{\mathcal T}^{sfg}(\mathscr V)
 \subseteq \underset{\mathscr V'\in fg(\mathscr V)}{\varinjlim}\textrm{ }\overline{\mathcal T}^{sfg}(\mathscr V')$ and the result follows.

\end{proof}

\begin{lem}\label{L3.15} Let $\mathcal B$ be a locally noetherian Grothendieck category and suppose that $t=(\mathbf T,\mathbf F)$ is a hereditary torsion theory on $\mathcal B$. Let $M\in \mathcal B$
and let $i:N\hookrightarrow M$ be a subobject satisfying the following properties:

\smallskip
(a) $N\in \mathbf T$. 

\smallskip
(b) If $T'\in \mathbf T$ is a finitely generated object, any morphism $f':T'\longrightarrow M$ factors through $N$. 

\smallskip
Then, $N$ is the torsion subobject of $M$, i.e., $N=\mathbf T(M)$. 
\end{lem}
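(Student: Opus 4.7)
The plan is to prove the two inclusions $N \subseteq \mathbf T(M)$ and $\mathbf T(M) \subseteq N$ separately, with the second being the substantive direction.

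The first inclusion is immediate: by hypothesis (a), $N \in \mathbf T$, and $N \hookrightarrow M$ is a subobject, so by the defining property of the torsion subobject $\mathbf T(M)$ (every subobject of $M$ lying in $\mathbf T$ is contained in $\mathbf T(M)$), we have $N \subseteq \mathbf T(M)$.

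For the reverse inclusion, the idea is to reduce to finitely generated subobjects using the locally noetherian hypothesis. Since $\mathcal B$ is locally noetherian, every object is the filtered colimit (equivalently, directed union) of its finitely generated subobjects. Applying this to $\mathbf T(M)$, I would write
\begin{equation*}
\mathbf T(M) = \sum_{T' \in fg(\mathbf T(M))} T',
\end{equation*}
where the sum runs over all finitely generated subobjects $T' \subseteq \mathbf T(M)$. Since $t = (\mathbf T, \mathbf F)$ is a hereditary torsion theory, $\mathbf T$ is closed under subobjects, so each such $T'$ lies in $\mathbf T$. By hypothesis (b), the morphism $f' : T' \hookrightarrow M$ (the inclusion, composed with $\mathbf T(M) \hookrightarrow M$) factors through the subobject $N \hookrightarrow M$, giving a morphism $g' : T' \to N$ with $i \circ g' = f'$. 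Since $f'$ is a monomorphism and $i$ is a monomorphism, it follows that $g'$ is a monomorphism as well, and $T' \subseteq N$ as subobjects of $M$. Taking the sum over all such $T'$ then yields $\mathbf T(M) \subseteq N$, which completes the proof.

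The only step requiring mild care is verifying that the factorization provided by (b) is compatible at the level of subobjects of $M$, i.e., that the factoring morphism $T' \to N$ composed with $i$ recovers the original inclusion $T' \hookrightarrow M$; this is part of the meaning of ``factors through'' and poses no real obstacle. I expect no genuine difficulty here: the lemma is essentially the standard maximality characterization of the torsion subobject, refined to finitely generated test objects via local noetherianity.
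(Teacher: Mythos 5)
Your proof is correct and takes essentially the same approach as the paper: both reduce to finitely generated subobjects using local noetherianity, apply hypothesis (b), and patch. The only stylistic difference is that you work directly with $\mathbf T(M)$ written as the union of its finitely generated subobjects (where the factorizations through $N$ are automatically compatible because they are inclusions), while the paper takes an arbitrary $T\in\mathbf T$ with a morphism $T\to M$ and explicitly assembles a compatible family $g':T'\to N$ into a map $g:T\to N$ via the filtered colimit; your specialization to $T=\mathbf T(M)$ lets you skip that patching step.
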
 

\begin{proof}
We consider $T\in \mathbf T$ and a morphism $f:T\longrightarrow M$.  Let  $fg(T)$ be the collection of finitely generated subobjects of $T$. For any $T'\in fg(T)$, the induced map 
$f'=f|_{T'}:T'\longrightarrow M$ factors through some $g':T'\longrightarrow N$ as $f'=i\circ g'$. Since $i$ is a monomorphism, this $g'$ is necessarily unique.

\smallskip
If $j:T'\hookrightarrow T''$ is an inclusion with $T', T''\in fg(T)$, we notice that $i\circ g'=f'=f''\circ j=i\circ g''\circ j$. Since $i$ is a monomorphism, this gives $g'=g''\circ j$. These maps
$\{g':T'\longrightarrow N\}_{T'\in fg(T)}$ together induce a morphism from the colimit $g:T=\underset{T'\in fg(T)}{\varinjlim}\textrm{ }T'\longrightarrow N$. We notice that
$(i\circ g)|_{T'}=i\circ g'=f'=f|_{T'}$ for each $T'\in fg(T)$. Since $T=\underset{T'\in fg(T)}{\varinjlim}\textrm{ }T'$, it now follows that $i\circ g=f$. This proves the result. 
\end{proof}

\begin{thm}\label{P3.16}  Let $\mathcal B$ be a locally noetherian Grothendieck category and suppose that $t=(\mathbf T,\mathbf F)$ is a hereditary torsion theory on $\mathcal B$. 
Let $\{M_i\}_{i\in I}$ be a filtered system of objects of $\mathcal B$. Then, we have
\begin{equation}
\underset{i\in I}{\varinjlim}\textrm{ }\mathbf T(M_i)=\mathbf T \left(\underset{i\in I}{\varinjlim}\textrm{ }M_i\right)
\end{equation}

\end{thm}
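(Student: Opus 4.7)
Set $M := \varinjlim_{i} M_i$. The plan is to establish two inclusions inside $M$, using Lemma \ref{L3.15} for the less obvious one.

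First I would observe that $\{\mathbf{T}(M_i)\}_{i \in I}$ forms a filtered subsystem of $\{M_i\}_{i \in I}$: for any transition morphism $M_i \to M_j$, the composite $\mathbf{T}(M_i) \hookrightarrow M_i \to M_j$ has image in $\mathbf{T}$ (since $\mathbf{T}$ is closed under quotients), hence factors through $\mathbf{T}(M_j)$. Because filtered colimits are exact in the Grothendieck category $\mathcal B$, the induced morphism $\varinjlim \mathbf{T}(M_i) \to \varinjlim M_i = M$ is a monomorphism, so we may regard $\varinjlim \mathbf{T}(M_i)$ as a subobject of $M$. Moreover, since $\mathbf{T}$ is a torsion class, it is closed under coproducts and quotients, hence under filtered colimits, and therefore $\varinjlim \mathbf{T}(M_i) \in \mathbf{T}$. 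By maximality of the torsion subobject this yields
\begin{equation*}
\varinjlim_{i} \mathbf{T}(M_i) \subseteq \mathbf{T}(M).
\end{equation*}

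For the reverse inclusion I would apply Lemma \ref{L3.15} to the subobject $N := \varinjlim_i \mathbf{T}(M_i) \hookrightarrow M$. Condition (a) of the lemma has just been checked. To verify condition (b), let $T' \in \mathbf{T}$ be finitely generated and let $f' \colon T' \to M$ be any morphism. Since $\mathcal B$ is locally noetherian, the finitely generated objects coincide with the finitely presented ones, so $\mathrm{Hom}_{\mathcal B}(T', -)$ preserves filtered colimits. Hence $f'$ lifts to some $g' \colon T' \to M_{i_0}$ for an index $i_0 \in I$ such that the composite $T' \xrightarrow{g'} M_{i_0} \to M$ equals $f'$. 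The image $\mathrm{Im}(g') \subseteq M_{i_0}$ is a quotient of $T' \in \mathbf{T}$, hence lies in $\mathbf{T}$, and therefore is contained in $\mathbf{T}(M_{i_0})$. Composing with the structure map $\mathbf{T}(M_{i_0}) \to N$ shows that $f'$ factors through $N \hookrightarrow M$. Lemma \ref{L3.15} then gives $N = \mathbf{T}(M)$, completing the proof.

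The only genuinely non-formal input is the factorization of a morphism from a finitely generated object through one of the $M_i$; this is precisely where local noetherianity (via the equivalence of finitely generated and finitely presented) is used. Everything else — exactness of filtered colimits, closure of $\mathbf T$ under filtered colimits, and the maximality criterion of Lemma \ref{L3.15} — slots in routinely, so I do not expect a significant obstacle.
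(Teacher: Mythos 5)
Your proof is correct and follows essentially the same route as the paper: verify that $N=\varinjlim \mathbf T(M_i)$ sits inside $\varinjlim M_i$ and belongs to $\mathbf T$, use the equivalence of finitely generated and finitely presented objects in a locally noetherian category to factor a map from a finitely generated torsion object through some $M_{i_0}$, then invoke Lemma \ref{L3.15}. The extra remarks you include (the containment $\varinjlim \mathbf T(M_i)\subseteq\mathbf T(M)$ via maximality, the functoriality of $\mathbf T$ on the system) are fine but not needed once Lemma \ref{L3.15} is applied.
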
 

\begin{proof} Since torsion classes are always closed under colimits, we know that $\underset{i\in I}{\varinjlim}\textrm{ }\mathbf T(M_i)\in \mathbf T$. Considering the monomorphisms 
$\mathbf T(M_i)\hookrightarrow M_i$, the filtered colimit induces an inclusion $\underset{i\in I}{\varinjlim}\textrm{ }\mathbf T(M_i)\hookrightarrow \underset{i\in I}{\varinjlim}\textrm{ }M_i$. We now consider a finitely generated object $T'\in \mathbf T$ along with a morphism $f':T'\longrightarrow \underset{i\in I}{\varinjlim}\textrm{ }M_i$. Since $\mathcal B$ is locally noetherian, $T'$ is also finitely presented. It follows that $f'$ factors through some $g':T'\longrightarrow M_{i_0}$. Since $T'\in \mathbf T$, $g'$ factors uniquely through 
the torsion subobject $\mathbf T(M_{i_0})$. Hence, $f':T'\longrightarrow \underset{i\in I}{\varinjlim}\textrm{ }M_i$ factors through $\underset{i\in I}{\varinjlim}\textrm{ }\mathbf T(M_i)$. The result now follows from Lemma \ref{L3.15}. 

\end{proof}

\begin{Thm}\label{P3.17cf} Let $\mathcal R$ be  such that
$Mod-\mathcal R$ is locally noetherian. Let $\tau=(\mathcal T,\mathcal F)$ be a hereditary torsion theory on $Mod-\mathcal R$. For any $\mathscr V\in FI_{\mathcal R}^{sfg}$, the torsion subobject of $\mathscr V$ with respect to the torsion
class $\overline{\mathcal T}^{sfg}$ is given by 
\begin{equation}\label{tors317}
\overline{\mathcal T}^{sfg}(\mathscr V)(S):=\underset{a\geq 0}{colim}\textrm{ }lim\left(
\begin{CD}\mathscr V(S)@>\psi_a^{\mathscr V}(S)>>({\mathbb S}^a\mathscr V)(S)@<<<\mathcal T( ({\mathbb S}^a\mathscr V)(S))
\end{CD}\right)
\end{equation} for each finite set $S$.
\end{Thm}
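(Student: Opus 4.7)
The plan is to reduce the shift-finitely-generated case to the finitely generated case already handled in Theorem \ref{P3.8cf}, by invoking the colimit presentation of $\overline{\mathcal T}^{sfg}(\mathscr V)$ from Proposition \ref{P3.14}(b) and then interchanging colimits with a finite limit. I expect no new input beyond this plus the AB5 property and Proposition \ref{P3.16}.

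First, by Proposition \ref{P3.14}(b), I have
\begin{equation*}
\overline{\mathcal T}^{sfg}(\mathscr V) \;=\; \underset{\mathscr V'\in fg(\mathscr V)}{\varinjlim}\,\overline{\mathcal T}(\mathscr V'),
\end{equation*}
where $fg(\mathscr V)$ is filtered (the sum of two finitely generated subobjects is finitely generated because $FI_{\mathcal R}$ is locally noetherian). Evaluating at a finite set $S$ and applying the explicit formula of Theorem \ref{P3.8cf} to each $\mathscr V'\in fg(\mathscr V)$ yields
\begin{equation*}
\overline{\mathcal T}^{sfg}(\mathscr V)(S) \;=\; \underset{\mathscr V'\in fg(\mathscr V)}{\varinjlim}\;\underset{a\geq 0}{\mathrm{colim}}\;\mathrm{lim}\left(\mathscr V'(S)\to (\mathbb S^a\mathscr V')(S)\leftarrow \mathcal T((\mathbb S^a\mathscr V')(S))\right).
\end{equation*}

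Next, both outer colimits are filtered and may be interchanged. Then the step I view as the central one is to push the filtered colimit $\varinjlim_{\mathscr V'}$ inside the pullback (a finite limit); this is legitimate because filtered colimits commute with finite limits in the AB5 Grothendieck category $Mod-\mathcal R$. It remains to identify the three arguments of the resulting pullback. The identities $\varinjlim_{\mathscr V'}\mathscr V'(S)=\mathscr V(S)$ and $\varinjlim_{\mathscr V'}(\mathbb S^a\mathscr V')(S)=(\mathbb S^a\mathscr V)(S)$ are immediate from the fact that $FI_{\mathcal R}$ is locally finitely generated, that $\mathbb S^a$ preserves all colimits, and that evaluation at $S$ preserves colimits.

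The main obstacle — and the only nontrivial identification — is the third one, $\varinjlim_{\mathscr V'}\mathcal T((\mathbb S^a\mathscr V')(S))=\mathcal T((\mathbb S^a\mathscr V)(S))$. This is exactly the statement that the torsion radical associated to the hereditary torsion theory $\tau$ commutes with the filtered colimit of the subobjects $(\mathbb S^a\mathscr V')(S)\subseteq (\mathbb S^a\mathscr V)(S)$; since $Mod-\mathcal R$ is locally noetherian and $\tau$ is hereditary, this is supplied by Proposition \ref{P3.16}. Substituting these three identifications back, and swapping the order of the two colimits once more, produces the formula \eqref{tors317}.
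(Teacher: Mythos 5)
Your proof is correct and follows essentially the same route as the paper: decompose $\overline{\mathcal T}^{sfg}(\mathscr V)$ via Proposition \ref{P3.14}(b), apply Theorem \ref{P3.8cf} to each finitely generated $\mathscr V'$, invoke Proposition \ref{P3.16} to commute the torsion radical past the filtered colimit, and conclude by the commutation of filtered colimits with finite limits. You spell out the intermediate identifications (of the three vertices of the pullback) more explicitly than the paper does, which is a useful clarification, but the argument is the same.
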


\begin{proof}
From Proposition \ref{P3.14}, we know that  
\begin{equation}\label{eq3.24}\overline{\mathcal T}^{sfg}(\mathscr V)=\underset{\mathscr V'\in fg(\mathscr V)}{\varinjlim}\textrm{ }\overline{\mathcal T}(\mathscr V')
\end{equation} Since each $\mathscr V'\in fg(\mathscr V)$ lies in $FI^{fg}_{\mathcal R}$, it follows from Theorem \ref{P3.8cf} that
\begin{equation}\label{eq3/25}
\overline{\mathcal T}(\mathscr V')(S):=\underset{a\geq 0}{colim}\textrm{ }lim\left(
\begin{CD}\mathscr V'(S)@>\psi_a^{\mathscr V'}(S)>>({\mathbb S}^a\mathscr V')(S)@<<<\mathcal T( ({\mathbb S}^a\mathscr V')(S))
\end{CD}\right)
\end{equation} for each finite set $S$. Since $Mod-{\mathcal R}$ is locally noetherian and  $\mathscr V=\underset{\mathscr V'\in fg(\mathscr V)}{\varinjlim}\textrm{ }\mathscr V'$, it now follows from Proposition \ref{P3.16} that
\begin{equation}\label{eq3.26}
\mathcal T( ({\mathbb S}^a\mathscr V)(S))=\underset{\mathscr V'\in fg(\mathscr V)}{\varinjlim}\textrm{ }\mathcal T( ({\mathbb S}^a\mathscr V')(S))
\end{equation} for each $a\geq 0$. The result of \eqref{tors317} is now clear from \eqref{eq3.24}, \eqref{eq3/25}, \eqref{eq3.26} and the fact that filtered colimits commute with finite limits.
\end{proof}

\section{Torsion closed $\mathbf{FI}$-modules}

\medskip
We continue with $\mathcal R$ being a small preadditive category such that $Mod-\mathcal R$ is locally noetherian. Given a torsion theory $\tau=(\mathcal T,\mathcal F)$
on $Mod-\mathcal R$, we have described the induced torsion class $\overline{\mathcal T}$ on finitely generated $FI$-modules. In this section, we will always suppose that $\mathcal T$
is a hereditary torsion class. Then, from Proposition \ref{xxP3.2}, we know that $\overline{\mathcal T}$ is a hereditary torsion class on $FI^{fg}_{\mathcal R}$. 

\smallskip
 We now consider the full subcategories ${\hat{\mathcal T}}$ and $\{{\hat{\mathcal T^a}}\}_{a\geq 0}$ of $FI_{\mathcal R}$
determined by setting
\begin{equation}\label{4.3ne}
\begin{array}{c}
Ob({\hat{\mathcal T}}):=\{\mbox{$\mathscr V\in Ob(FI_{\mathcal R})$ $\vert$ $\mathscr V_n\in \mathcal T$ for $n\gg 0$}\}\\
Ob({\hat{\mathcal T^a}}):=\{\mbox{$\mathscr V\in Ob(FI_{\mathcal R})$ $\vert$ $\mathscr V_n\in \mathcal T$ for all $n\geq a$}\} \qquad \forall\textrm{ }a\geq 0\\
\end{array}
\end{equation} It is also clear that we have a filtration
\begin{equation}\label{filtne}
{\hat{\mathcal T^0}}\subseteq {\hat{\mathcal T^1}}\subseteq  {\hat{\mathcal T^2}}\subseteq \dots \qquad {\hat{\mathcal T}}=\underset{a\geq 0}{\bigcup}
{\hat{\mathcal T^a}}
\end{equation}
We observe that each ${\hat{\mathcal T^a}}$ is closed under extensions, quotients, subobjects and coproducts, making it a hereditary torsion class
in the category $FI_{\mathcal R}$ (see, for instance, \cite[$\S$ I.1]{BeRe}). However, we notice that ${\hat{\mathcal T}}$ need not be a torsion class in $FI_{\mathcal R}$, because it may not contain arbitrary coproducts. In fact, ${\hat{\mathcal T}}$ is only a Serre subcategory, i.e., it is closed under extensions, subobjects and quotients. The purpose of this section is to construct functors from $FI_{\mathcal R}$ to  ${\hat{\mathcal T^a}}$-closed objects and to ${\hat{\mathcal T}}$-closed objects of  $FI_{\mathcal R}$.  For this, we will first develop some general results on locally noetherian Grothendieck categories.

\begin{defn}\label{D4.1} (see, for instance, \cite[Definition III.2.2]{Hvedri}) Let $\mathcal A$ be a Grothendieck category and let $\mathcal C$ be a Serre subcategory. Then:

\smallskip
(1) A morphism $u:A\longrightarrow B$ in $\mathcal A$ is said to be a $\mathcal C$-isomorphism if both $Ker(u)$ and $Coker(u)$ lie in $\mathcal C$.

\smallskip
(2) An object $L$ in $\mathcal A$ is said to be $\mathcal C$-closed if for every $\mathcal C$-isomorphism $u:A\longrightarrow B$ in $\mathcal A$, the induced morphism 
 $Hom(u,L):Hom(B,L)\longrightarrow Hom(A,L)$ is an isomorphism.
 
 \smallskip
 (3) A morphism $f:A\longrightarrow A_{\mathcal C}$ in $\mathcal A$ is said to be a $\mathcal C$-envelope if $f$ is a $\mathcal C$-isomorphism and $A_{\mathcal C}$ is $\mathcal C$-closed. 

\end{defn}

Let $\mathcal A$ be a Grothendieck category. From now onwards, for $X\in \mathcal A$, we will denote by $fg(X)$ the set of its finitely generated subobjects.

\begin{lem}\label{4Lem1} Let  $\mathcal A$ be a  locally noetherian Grothendieck category and $\mathcal C$ be a Serre subcategory. Suppose that an object $L\in \mathcal A$ has the following property : for any $\mathcal C$-isomorphism $u':A'\longrightarrow B'$ with $A'$, $B'$ finitely generated, the induced morphism  $Hom(u',L):Hom(B',L)\longrightarrow Hom(A',L)$ is an isomorphism.

\smallskip
Then, for any $\mathcal C$-isomorphism $u:A\longrightarrow B$ that is an epimorphism in $\mathcal A$, the induced morphism  $Hom(u,L):Hom(B,L)\longrightarrow Hom(A,L)$ is an isomorphism.
\end{lem}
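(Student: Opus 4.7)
The plan is to split the bijectivity claim for $\mathrm{Hom}(u,L)$ into injectivity and surjectivity, and observe first that injectivity is immediate: if $g\circ u=0$ for some $g:B\to L$, then $g=0$ because $u$ is an epimorphism. So the real content is surjectivity, i.e.\ showing that every $f:A\longrightarrow L$ factors through $u$.

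Since $u$ is epi with kernel $K:=\mathrm{Ker}(u)$, the object $B$ is the cokernel of $K\hookrightarrow A$, so $f$ factors through $u$ if and only if the composite $K\hookrightarrow A\xrightarrow{f}L$ is zero. Thus the problem reduces to showing $f|_K=0$. To leverage the finitely generated hypothesis on $L$, I would use that $\mathcal A$ is locally noetherian, hence locally finitely generated, so that $K=\varinjlim_{K'\in fg(K)} K'$ as a filtered colimit of its finitely generated (noetherian) subobjects. Applying $\mathrm{Hom}(-,L)$ converts this colimit into a limit, so it suffices to verify $f|_{K'}=0$ for each $K'\in fg(K)$.

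Now $K\in \mathcal C$ because $u$ is a $\mathcal C$-isomorphism (its cokernel $0$ is in $\mathcal C$ and its kernel is $K$), and $\mathcal C$ is a Serre subcategory, so each finitely generated subobject $K'\subseteq K$ also lies in $\mathcal C$. The zero morphism $u':K'\longrightarrow 0$ then has $\mathrm{Ker}(u')=K'\in \mathcal C$ and $\mathrm{Coker}(u')=0\in\mathcal C$, making it a $\mathcal C$-isomorphism between finitely generated objects. The hypothesis on $L$ therefore forces $\mathrm{Hom}(0,L)\longrightarrow\mathrm{Hom}(K',L)$ to be an isomorphism, giving $\mathrm{Hom}(K',L)=0$ and in particular $f|_{K'}=0$.

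The main conceptual step, and the only place the locally noetherian assumption genuinely enters, is the passage from the finitely generated hypothesis on $L$ to a statement about the possibly-large kernel $K$; this is bridged by the filtered-colimit presentation of $K$ together with the Serre property of $\mathcal C$ (which guarantees that each $K'\in fg(K)$ lies in $\mathcal C$). The epimorphism assumption on $u$ is what lets us bypass the cokernel of $u$ entirely and deal only with the kernel side, so no further reduction is required.
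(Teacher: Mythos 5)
Your proof is correct, and it takes a genuinely different route from the paper's. The paper writes $A$ as the filtered colimit of its finitely generated subobjects $A'$, observes that each restricted epimorphism $u':A'\twoheadrightarrow B':=Im(u|_{A'})$ is a $\mathcal C$-isomorphism between finitely generated objects (via $Ker(u')\subseteq Ker(u)\in\mathcal C$ and the Serre property), and then computes $Hom(u,L)$ directly as an inverse limit of the resulting isomorphisms $Hom(u',L)$, with $B=\varinjlim B'$ because $u$ is epi. You instead split the bijectivity into injectivity (automatic from $u$ epi) and surjectivity, reduce surjectivity to showing $f|_{Ker(u)}=0$, and then reduce to finitely generated subobjects $K'\subseteq Ker(u)$, applying the hypothesis only to the trivial $\mathcal C$-isomorphisms $K'\to 0$. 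In effect your argument shows that for epimorphisms the full hypothesis on $L$ is not needed --- it suffices that $Hom(C,L)=0$ for every finitely generated $C\in\mathcal C$ --- which is a cleaner structural observation. Both arguments hinge on the same two inputs (local finite generation of $\mathcal A$ to express objects as filtered colimits of finitely generated subobjects, and the closure of $\mathcal C$ under subobjects), and neither actually needs the full noetherian hypothesis for this half of the argument; that is only required in the companion lemma about monomorphisms.
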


\begin{proof} We consider a  $\mathcal C$-isomorphism $u:A\longrightarrow B$ that is an epimorphism in $\mathcal A$. By definition, $Coker(u)=0$ and $Ker(u)\in \mathcal C$. Let $A' \subseteq A$ be a finitely generated subobject and let $u':A'\longrightarrow B$ denote the restriction of $u$ to $A'$. Set $B':=Im(u')\subseteq B$. Since $A'$ is finitely generated,
so is its quotient $B'$. Then, $u':A'\longrightarrow B'$ satisfies
$Coker(u')=0$ and $Ker(u')\subseteq Ker(u)\in \mathcal C$. Since $\mathcal C$ is a Serre subcategory, we get $Ker(u')\in \mathcal C$. It follows that $u'$ is a $\mathcal C$-isomorphism.

\smallskip
Using the given property of $L$, we now obtain that the induced morphism  $Hom(u',L):Hom(B',L)\longrightarrow Hom(A',L)$ is an isomorphism. Since $\mathcal A$ is locally finitely generated, we know that $A$ is the filtered colimit over all $A'\in fg(A)$. Since $u$ is an epimorphism, we know that $B$ is the filtered colimit over the corresponding objects $\{B'=Im(u|_{A'}:A'\longrightarrow B)\}_{A'\in fg(A)}$.  It follows that
\begin{equation}
Hom(u,L):Hom(B,L)=\underset{A'\in fg(A)}{\varprojlim}\textrm{ }Hom(B',L)\overset{\cong}{\longrightarrow} \underset{A'\in fg(A)}{\varprojlim}\textrm{ }Hom(A',L)=Hom(A,L)
\end{equation} is an isomorphism. This proves the result. 

\end{proof}

\begin{lem}\label{4Lem2} Let  $\mathcal A$ be a locally noetherian Grothendieck category and $\mathcal C$ be a Serre subcategory. Suppose that an object $L\in \mathcal A$ has the following property : for any $\mathcal C$-isomorphism $u':A'\longrightarrow B'$ with $A'$, $B'$ finitely generated, the induced morphism  $Hom(u',L):Hom(B',L)\longrightarrow Hom(A',L)$ is an isomorphism.

\smallskip
Then, for any $\mathcal C$-isomorphism $u:A\longrightarrow B$ that is a monomorphism in $\mathcal A$, the induced morphism  $Hom(u,L):Hom(B,L)\longrightarrow Hom(A,L)$ is an isomorphism.
\end{lem}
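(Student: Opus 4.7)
The plan is to mirror the proof of Lemma~\ref{4Lem1}, but to approximate $u$ by pullbacks along finitely generated subobjects of the \emph{target} $B$ rather than by images of finitely generated subobjects of the source.

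Given a $\mathcal C$-isomorphism $u:A\longrightarrow B$ that is a monomorphism, I identify $A$ with its image in $B$, so that $Ker(u)=0$ and $Coker(u)=B/A\in\mathcal C$. For each $B'\in fg(B)$, I set $A':=A\cap B'$. Since $\mathcal A$ is locally noetherian, $B'$ is noetherian, and hence the subobject $A'\subseteq B'$ is finitely generated. The restriction $u':=u|_{A'}:A'\hookrightarrow B'$ has cokernel $B'/A'$, and the canonical map $B'/A'\longrightarrow B/A$ induced by the inclusion $B'\hookrightarrow B$ has kernel $(A\cap B')/A'=0$; thus $B'/A'$ embeds into $B/A\in\mathcal C$. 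Because $\mathcal C$ is a Serre subcategory, it is closed under subobjects, so $B'/A'\in\mathcal C$. Hence $u'$ is a $\mathcal C$-isomorphism between finitely generated objects, and the hypothesis on $L$ yields that $Hom(u',L):Hom(B',L)\longrightarrow Hom(A',L)$ is an isomorphism.

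Next, I would verify that $A=\varinjlim_{B'\in fg(B)}A'$ as a filtered colimit of subobjects. The sum $A_0:=\sum_{B'\in fg(B)}A'$ is a subobject of $A$. Any $A''\in fg(A)$ is itself a finitely generated subobject of $B$, so taking $B'=A''$ yields $A''=A\cap A''\subseteq A_0$; since $\mathcal A$ is locally finitely generated, $A=\sum_{A''\in fg(A)}A''\subseteq A_0$, and hence $A=A_0$. Combined with the standard identification $B=\varinjlim_{B'\in fg(B)}B'$, applying $Hom(-,L)$ converts these colimits into limits, and naturality assembles the isomorphisms $Hom(u',L)$ into a compatible system. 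This gives
\begin{equation*}
Hom(u,L):Hom(B,L)=\varprojlim_{B'}Hom(B',L)\longrightarrow\varprojlim_{B'}Hom(A',L)=Hom(A,L),
\end{equation*}
which is an inverse limit of isomorphisms and hence itself an isomorphism.

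The main subtlety lies in the identification $B'/A'\hookrightarrow B/A$, which crucially uses that $u$ is a monomorphism so that the pullback $A'=A\cap B'$ interacts well with the cokernel; once this is in hand, the remaining bookkeeping parallels Lemma~\ref{4Lem1}.
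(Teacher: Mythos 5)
Your proof is correct and takes essentially the same approach as the paper: for each finitely generated $B'\subseteq B$ you pull back along $B'\hookrightarrow B$ (your $A'=A\cap B'$ is the paper's pullback $A\times_B B'$), check that $u':A'\to B'$ is a $\mathcal C$-isomorphism between finitely generated objects, and pass to the limit over $fg(B)$. The only cosmetic differences are that you establish $B'/A'\hookrightarrow B/A$ by computing the kernel directly rather than invoking the Snake Lemma, and that you spell out the verification that $A=\varinjlim_{B'\in fg(B)}A'$, which the paper simply asserts.
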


\begin{proof} We consider a  $\mathcal C$-isomorphism $u:A\longrightarrow B$ that is a monomorphism in $\mathcal A$. Then, by definition, $Ker(u)=0$ and $Coker(u)\in \mathcal C$. 
Let $B'\subseteq B$ be a finitely generated subobject and let $u':A':=A\times_BB'\longrightarrow B'$ be the pullback of $u$ along $B'\hookrightarrow B$. Clearly, $Ker(u')=0$. Since $\mathcal A$ is locally noetherian and $A'\subseteq B'$, it follows that $A'$ is finitely generated.

\smallskip We now note that all the squares in the following diagram are pullback squares.
\begin{equation}
\begin{CD}
A' @>u'>> B' @>>> 0 \\
@VVV @VVV @VVV \\
A @>u>> B @>>> B/B'\\
\end{CD}
\end{equation}
It follows that $A'=Ker(A\overset{u}{\longrightarrow}B\longrightarrow B/B')$ and hence we have a monomorphism $A/A'\longrightarrow B/B'$. A simple application of Snake Lemma to the following diagram 
\begin{equation}
\begin{CD}
0@>>> A' @>>> A @>>> A/A'@>>>  0\\
@. @Vu'VV @VuVV @VVV @. \\
0 @>>> B' @>>> B @>>> B/B' @>>> 0\\
\end{CD}
\end{equation}
now gives us the exact sequence $0\longrightarrow Coker(u')\longrightarrow Coker(u)$. Since $\mathcal C$ is a Serre subcategory, we now get $Coker(u')\in \mathcal C$. 
Using the given property of $L$, we now obtain that the induced morphism  $Hom(u',L):Hom(B',L)\longrightarrow Hom(A',L)$ is an isomorphism. Since $\mathcal A$ is locally finitely generated, we know that $B$ is the filtered colimit over all $B'\in fg(B)$. We notice that $A$ is the filtered colimit over the corresponding objects $\{A'=A\times_BB'\}_{B'\in fg(B)}$.  It follows that
\begin{equation}
Hom(u,L):Hom(B,L)=\underset{B'\in fg(B)}{\varprojlim}\textrm{ }Hom(B',L)\overset{\cong}{\longrightarrow} \underset{B'\in fg(B)}{\varprojlim}\textrm{ }Hom(A',L)=Hom(A,L)
\end{equation} is an isomorphism. This proves the result.

\end{proof}

\begin{thm}\label{P4.4}Let  $\mathcal A$ be a locally noetherian Grothendieck category and $\mathcal C$ be a Serre subcategory.  For an object $L\in \mathcal A$, the following are equivalent.

\smallskip
(1) The object $L$ is $\mathcal C$-closed, i.e., for any $\mathcal C$-isomorphism $u:A\longrightarrow B$, the induced morphism  $Hom(u,L):Hom(B,L)\longrightarrow Hom(A,L)$ is an isomorphism.

\smallskip
(2) For any $\mathcal C$-isomorphism $u':A'\longrightarrow B'$ with $A'$, $B'$ finitely generated, the induced morphism  $Hom(u',L):Hom(B',L)\longrightarrow Hom(A',L)$ is an isomorphism.

\end{thm}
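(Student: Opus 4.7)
The direction (1) $\Rightarrow$ (2) is immediate, since finitely generated objects are objects of $\mathcal A$ and so the $\mathcal C$-closed condition for $L$, when applied to $u':A'\longrightarrow B'$ with $A',B'$ finitely generated, yields exactly the hypothesis of (2).

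For (2) $\Rightarrow$ (1), my plan is to reduce an arbitrary $\mathcal C$-isomorphism $u:A\longrightarrow B$ to the two special cases already handled in Lemmas \ref{4Lem1} and \ref{4Lem2}, by using the canonical epi-mono factorization of $u$ in the abelian category $\mathcal A$. Write $u = i\circ p$ with $p:A\twoheadrightarrow Im(u)$ an epimorphism and $i:Im(u)\hookrightarrow B$ a monomorphism. I claim both $p$ and $i$ are themselves $\mathcal C$-isomorphisms: indeed $Coker(p)=0$, $Ker(p)=Ker(u)\in\mathcal C$, while $Ker(i)=0$ and $Coker(i)=Coker(u)\in\mathcal C$. (Here we use only that $u$ is a $\mathcal C$-isomorphism and that $\mathcal C$ contains zero objects.)

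Now observe that the hypothesis of (2) is exactly the hypothesis required for Lemmas \ref{4Lem1} and \ref{4Lem2}. Applying Lemma \ref{4Lem1} to the epimorphism $p$, the induced map $Hom(p,L)$ is an isomorphism; applying Lemma \ref{4Lem2} to the monomorphism $i$, the induced map $Hom(i,L)$ is an isomorphism. Composing the two,
\begin{equation*}
Hom(u,L) = Hom(p,L)\circ Hom(i,L):Hom(B,L)\longrightarrow Hom(Im(u),L)\longrightarrow Hom(A,L)
\end{equation*}
is an isomorphism, which gives (1).

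There is no serious obstacle here; the essential content has already been absorbed into the two preceding lemmas, where the locally noetherian assumption was used to express arbitrary objects as filtered colimits of their finitely generated subobjects and to ensure that kernels/images of maps between finitely generated objects remain finitely generated. The only point requiring any care is the verification that the epi and mono parts of the factorization inherit the $\mathcal C$-isomorphism property, which is straightforward from the Serre subcategory hypothesis on $\mathcal C$.
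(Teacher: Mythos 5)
Your proof is correct and follows essentially the same route as the paper: factor the $\mathcal C$-isomorphism $u$ into an epimorphism followed by a monomorphism, observe that both pieces inherit the $\mathcal C$-isomorphism property, and then invoke Lemmas \ref{4Lem1} and \ref{4Lem2}. The only cosmetic difference is that you record the easy direction $(1)\Rightarrow(2)$ explicitly, which the paper leaves implicit.
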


\begin{proof}
We only need to show that (2) $\Rightarrow$ (1). Let $u:A\longrightarrow B$ be a $\mathcal C$-isomorphism. Then, we can factor $u$ uniquely as 
$A\overset{f}{\longrightarrow}C\overset{g}{\longrightarrow}B$ where $f$ is an epimorphism and $g$ is a monomorphism. We notice that
\begin{equation}
Ker(f)=Ker(u)\in \mathcal C\qquad Coker(f)=0\qquad Ker(g)=0\qquad Coker(g)=Coker(u)\in\mathcal C
\end{equation} and hence both $f$ and $g$ are $\mathcal C$-isomorphisms. The result is now clear from Lemma \ref{4Lem1} and Lemma \ref{4Lem2}. 
\end{proof}

We now return to $FI$ modules over $\mathcal R$ along with a hereditary torsion theory $\tau=(\mathcal T,\mathcal F)$ on $Mod-\mathcal R$. Accordingly, there is a functor 
$\mathbb E_\tau: Mod-\mathcal R\longrightarrow Mod-\mathcal R$ that takes any $V\in Mod-\mathcal R$ to its torsion envelope $\mathbb E_\tau( V)$. We refer the reader
to \cite[Theorem 2.5]{Gar} for the explicit construction of this functor. Since  $Mod-\mathcal R$  is a locally finitely presented Grothendieck category, we note that hereditary torsion
classes in $Mod-\mathcal R$ are the same as localizing subcategories of $Mod-\mathcal R$ (see, for instance, \cite[Theorem 1.13.5]{Bor}). 

\smallskip By abuse of notation,  we will also denote by
$\mathbb E_\tau$ the functor given by
\begin{equation}\label{envel4}
\mathbb E_\tau: FI_{\mathcal R}\longrightarrow FI_{\mathcal R}\qquad \mathbb E_\tau(\mathscr V)(S):=\mathbb E_\tau(\mathscr V(S))
\end{equation} for any $\mathscr V\in FI_{\mathcal R}$ and any finite set $S$. The canonical morphisms $\mathscr V(S)\longrightarrow \mathbb E_\tau(\mathscr V(S))$ together
induce a morphism $i_{\tau}(\mathscr V):\mathscr V\longrightarrow \mathbb E_\tau(\mathscr V)$ in $FI_{\mathcal R}$. We also observe that from \eqref{envel4} it is clear that
\begin{equation}\label{envel5}
\mathbb E_\tau\mathbb S^a(\mathscr V)=\mathbb S^a\mathbb E_\tau(\mathscr V)\qquad\forall \textrm{ }a\geq 0
\end{equation} We will denote by $Cl(\mathcal T)$ (resp. $Cl(\hat{\mathcal T^a})$, $Cl(\hat{\mathcal T})$) the full subcategory of $Mod-\mathcal R$ (resp. 
$FI_{\mathcal R}$) consisting of closed objects with respect to the Serre subcategory $\mathcal T\subseteq Mod-\mathcal R$ (resp. $\hat{\mathcal T^a}$, $\hat{\mathcal T}
\subseteq FI_{\mathcal R}$). 

\begin{lem}\label{Lem4.5} Let $\mathscr L\in FI_{\mathcal R}$ be such that $\mathscr L(S)$ is $\mathcal T$-closed for each finite set $S$. Then, $\mathscr L$
is $\hat{\mathcal T^0}$-closed.

\end{lem}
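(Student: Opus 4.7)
The plan is to unpack the definitions and reduce the $\hat{\mathcal T^0}$-closedness of $\mathscr L$ to the $\mathcal T$-closedness of each $\mathscr L(S)$. Suppose $u : \mathscr A \longrightarrow \mathscr B$ is a $\hat{\mathcal T^0}$-isomorphism in $FI_{\mathcal R}$. Since kernels and cokernels in $FI_{\mathcal R}$ are computed pointwise, the conditions $\mathrm{Ker}(u), \mathrm{Coker}(u) \in \hat{\mathcal T^0}$ translate, for each finite set $S$, into $\mathrm{Ker}(u(S)), \mathrm{Coker}(u(S)) \in \mathcal T$. In other words, every $u(S) : \mathscr A(S) \longrightarrow \mathscr B(S)$ is a $\mathcal T$-isomorphism in $Mod$-$\mathcal R$. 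This is the only place the hypothesis on $u$ is used.

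Next, given any morphism $g : \mathscr A \longrightarrow \mathscr L$ in $FI_{\mathcal R}$, I will construct the unique extension through $u$. For each finite set $S$, the $\mathcal T$-closedness of $\mathscr L(S)$ together with the $\mathcal T$-isomorphism $u(S)$ yields a unique $\tilde g(S) : \mathscr B(S) \longrightarrow \mathscr L(S)$ with $\tilde g(S) \circ u(S) = g(S)$. The main technical step is to verify that the family $\{\tilde g(S)\}_S$ is natural in $S$, i.e.\ defines a morphism in $FI_{\mathcal R}$. For any $\phi : S \longrightarrow T$ in $FI$, both composites $\mathscr L(\phi) \circ \tilde g(S)$ and $\tilde g(T) \circ \mathscr B(\phi)$ precomposed with $u(S)$ equal $\mathscr L(\phi) \circ g(S) = g(T) \circ \mathscr A(\phi)$, using naturality of $g$ and $u$. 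Since $\mathrm{Hom}(u(S), \mathscr L(T))$ is an isomorphism (in particular injective) by the $\mathcal T$-closedness of $\mathscr L(T)$ applied to the $\mathcal T$-isomorphism $u(S)$, the two composites agree, establishing naturality.

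The uniqueness of $\tilde g$ in $FI_{\mathcal R}$ follows immediately from the pointwise uniqueness of each $\tilde g(S)$. This gives that $\mathrm{Hom}(u, \mathscr L) : \mathrm{Hom}(\mathscr B, \mathscr L) \longrightarrow \mathrm{Hom}(\mathscr A, \mathscr L)$ is a bijection, so $\mathscr L$ is $\hat{\mathcal T^0}$-closed.

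I do not expect any real obstacle: the result is essentially the observation that $\hat{\mathcal T^0}$-closedness is a pointwise notion because $\hat{\mathcal T^0}$ itself is defined pointwise. The only slightly nontrivial point is the naturality check for $\tilde g$, where one must be careful to use the $\mathcal T$-closedness of $\mathscr L(T)$ (not $\mathscr L(S)$) to cancel $u(S)$ on the right. Note that I do not need Proposition \ref{P4.4} here, although one could instead reduce first to finitely generated $\mathscr A$, $\mathscr B$ via that criterion; the direct pointwise argument is shorter.
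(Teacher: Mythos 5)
Your proof is correct and follows essentially the same route as the paper: both reduce $\hat{\mathcal T^0}$-closedness to the pointwise $\mathcal T$-closedness, construct the lift $\tilde g(S)$ pointwise, and verify naturality by precomposing with $u(S)$ and invoking the injectivity of $Hom(u(S),\mathscr L(T))$. The only cosmetic difference is that you establish bijectivity of $Hom(u,\mathscr L)$ via existence-and-uniqueness of the lift, whereas the paper checks ``monomorphism'' and ``epimorphism'' as two separate steps; these are trivially equivalent.
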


\begin{proof}
Let $u:\mathscr A\longrightarrow \mathscr B$ be a $\hat{\mathcal T^0}$-isomorphism in $FI_{\mathcal R}$. Then, by definition, we have $Ker(u)$, $Coker(u)\in \hat{\mathcal T^0}$, i.e., for each finite set $S$,
we must have $Ker(u(S))$, $Coker(u(S))\in \mathcal T$. We consider a morphism $f:\mathscr B\longrightarrow \mathscr L$ in $FI_{\mathcal R}$. If $f\circ u=0$, it follows that
$f(S)\circ u(S)=0$ for each $S\in FI$. Since each $\mathscr L(S)$ is $\mathcal T$-closed, we know that $Hom(u(S),\mathscr L(S)):Hom(\mathscr B(S),\mathscr L(S))
\longrightarrow Hom(\mathscr A(S),\mathscr L(S))$ is an isomorphism. This gives $f(S)=0$ for each $S\in FI$, i.e., $f=0$. 

\smallskip
On the other hand, consider a morphism $g:\mathscr A\longrightarrow \mathscr L$ in $FI_{\mathcal R}$. Each $\mathscr L(S)$ is $\mathcal T$-closed, which gives us a unique morphism 
morphism $f(S):\mathscr B(S)\longrightarrow\mathscr L(S)$ such that $g(S)=f(S)\circ u(S)$. We claim that $\{f(S)\}_{S\in FI}$ gives a morphism $f:\mathscr B
\longrightarrow \mathscr L$, i.e., for any $\phi: S\longrightarrow T$ in $FI$, we have $\mathscr L(\phi)\circ f(S)=f(T)\circ \mathscr B(\phi):\mathscr B(S)
\longrightarrow \mathscr L(T)$. For this, we notice that 
\begin{equation}\label{eq4.10}
f(T)\circ \mathscr B(\phi)\circ u(S) = f(T)\circ u(T)\circ \mathscr A(\phi)=\mathscr L(\phi)\circ f(S)\circ u(S):\mathscr A(S)\longrightarrow \mathscr L(T)
\end{equation} Since $u(S)$ is a $\mathcal T$-isomorphism and $\mathscr L(T)$ is $\mathcal T$-closed, we must have an isomorphism
$Hom(\mathscr B(S),\mathscr L(T))\longrightarrow Hom(\mathscr A(S),\mathscr L(T))$. From \eqref{eq4.10}, it is now clear that $\mathscr L(\phi)\circ f(S)=f(T)\circ \mathscr B(\phi)$.

\smallskip
We have now shown that the induced morphism $FI_{\mathcal R}(u,\mathscr L):FI_{\mathcal R}(\mathscr B,\mathscr L)\longrightarrow FI_{\mathcal R}(\mathscr A,\mathscr L)$ is both a monomorphism and an epimorphism, i.e., an isomorphism. This proves the result.
\end{proof}

\begin{thm}\label{P4.6} Let $\mathscr L\in FI_{\mathcal R}$. Then, $\mathscr L$ is $\hat{\mathcal T^0}$-closed if and only if $\mathscr L(S)$ is $\mathcal T$-closed
for each finite set $S$. 
\end{thm}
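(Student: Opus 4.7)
The ``if'' direction is precisely Lemma \ref{Lem4.5}. My task is the converse: assuming $\mathscr L$ is $\hat{\mathcal T^0}$-closed, I must show each $\mathscr L(S)$ is $\mathcal T$-closed in $Mod-\mathcal R$. Fix $S = [n]$, and take an arbitrary $\mathcal T$-isomorphism $u:A\longrightarrow B$ in $Mod-\mathcal R$; I need $Hom(u,\mathscr L(n))$ to be an isomorphism. The strategy is to lift $u$ to a suitable morphism $\tilde u$ in $FI_{\mathcal R}$ that is automatically a $\hat{\mathcal T^0}$-isomorphism, and then reduce the desired statement for $\mathscr L(n)$ to the assumed closedness of $\mathscr L$ via a natural adjunction.

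The lifting is by a direct generalization of the construction in \eqref{mdr}: for any $M\in Mod-\mathcal R$, define ${_n\mathscr M_M}\in FI_{\mathcal R}$ by ${_n\mathscr M_M}(S'):=M^{([n],S')}$, with the transition maps induced by post-composition on $([n],S')$. Reproducing the argument of Lemma \ref{L2.3} verbatim, replacing ``element $f\in\mathscr V(d)(r)$'' by ``morphism $f:M\longrightarrow\mathscr V(n)$ in $Mod-\mathcal R$'', yields a natural isomorphism
\begin{equation*}
FI_{\mathcal R}({_n\mathscr M_M},\mathscr V)\cong Hom_{Mod-\mathcal R}(M,\mathscr V(n))
\end{equation*}
for all $\mathscr V\in FI_{\mathcal R}$. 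In particular, $u$ induces $\tilde u:={_n\mathscr M_u}:{_n\mathscr M_A}\longrightarrow{_n\mathscr M_B}$, and the above identification is natural in $u$.

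I now verify that $\tilde u$ is a $\hat{\mathcal T^0}$-isomorphism. For each finite set $S'$, the component $\tilde u(S')=u^{([n],S')}$ is simply a finite direct sum of copies of $u$ (finite because $([n],S')$ is a finite set of injections), so $Ker(\tilde u(S'))=Ker(u)^{([n],S')}$ and $Coker(\tilde u(S'))=Coker(u)^{([n],S')}$. Since $\mathcal T$ is a torsion class, it is closed under extensions (and hence under finite coproducts), so both kernel and cokernel of $\tilde u(S')$ lie in $\mathcal T$ for every $S'$. Thus $Ker(\tilde u), Coker(\tilde u)\in\hat{\mathcal T^0}$, as required.

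Applying the hypothesis that $\mathscr L$ is $\hat{\mathcal T^0}$-closed, we conclude that $Hom(\tilde u,\mathscr L):FI_{\mathcal R}({_n\mathscr M_B},\mathscr L)\longrightarrow FI_{\mathcal R}({_n\mathscr M_A},\mathscr L)$ is an isomorphism. Under the natural adjunction displayed above, this map is identified with $Hom(u,\mathscr L(n)):Hom_{Mod-\mathcal R}(B,\mathscr L(n))\longrightarrow Hom_{Mod-\mathcal R}(A,\mathscr L(n))$, which is therefore an isomorphism as well. Since $u$ was an arbitrary $\mathcal T$-isomorphism, this shows $\mathscr L(n)=\mathscr L(S)$ is $\mathcal T$-closed, completing the proof. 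The only step requiring care is the generalized Yoneda identification and its naturality in $u$, which however is essentially a routine extension of Lemma \ref{L2.3}; everything else is formal manipulation with the torsion-class axioms and the componentwise structure of $FI_{\mathcal R}$.
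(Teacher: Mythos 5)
Your proof of the ``only if'' direction is correct, but it follows a genuinely different path than the paper's. The paper uses the torsion envelope functor $\mathbb E_\tau$ applied componentwise: it shows that $i_\tau:\mathscr L\longrightarrow \mathbb E_\tau(\mathscr L)$ is a $\hat{\mathcal T^0}$-envelope, and then invokes the uniqueness of envelopes (noting that $\hat{\mathcal T^0}$ is a hereditary torsion class in $FI_{\mathcal R}$) to conclude that $i_\tau$ must be an isomorphism when $\mathscr L$ is itself $\hat{\mathcal T^0}$-closed, so each $\mathscr L(S)\cong\mathbb E_\tau(\mathscr L(S))$ is $\mathcal T$-closed. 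You instead construct the ``free $FI$-module'' left adjoint ${_n\mathscr M}_{(-)}:Mod\text{-}\mathcal R\longrightarrow FI_{\mathcal R}$ to evaluation at $[n]$ (extending Lemma \ref{L2.3} from $H_r$ to an arbitrary $M$), observe that it carries $\mathcal T$-isomorphisms to $\hat{\mathcal T^0}$-isomorphisms because kernels and cokernels of $\tilde u(S')=u^{([n],S')}$ are finite coproducts of $Ker(u)$, $Coker(u)$ (and $\mathcal T$, being closed under extensions, is closed under finite coproducts), and then transfer closedness along the resulting natural isomorphism $FI_{\mathcal R}({_n\mathscr M_M},\mathscr L)\cong Hom_{Mod\text{-}\mathcal R}(M,\mathscr L(n))$. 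Your route is more elementary in that it does not invoke the existence or uniqueness of $\hat{\mathcal T^0}$-envelopes at all, and in fact it only needs $\hat{\mathcal T^0}$ to be a Serre subcategory (plus closure of $\mathcal T$ under finite coproducts), rather than a hereditary torsion class. What the paper's approach buys, on the other hand, is the explicit identification of $i_\tau:\mathscr L\longrightarrow\mathbb E_\tau(\mathscr L)$ as the $\hat{\mathcal T^0}$-envelope --- a fact that is reused in the construction of the functors $\mathbb L^k_\tau=\mathbb T^k\circ\mathbb S^k\circ\mathbb E_\tau$ in Proposition \ref{P4.10}, so it slots more directly into the subsequent development. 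Both arguments are sound; yours is self-contained and arguably cleaner as a stand-alone proof of this particular equivalence.
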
 

\begin{proof}
Since $\mathcal T$ is a localizing subcategory of $Mod-\mathcal 
R$, the functor $\mathbb E_\tau: Mod-\mathcal R\longrightarrow Cl(\mathcal T)$ is left adjoint to the inclusion $Cl(\mathcal T)
\longrightarrow Mod-\mathcal R$. The ``unit'' of this adjunction gives a canonical morphism $V\longrightarrow \mathbb E_\tau(V)$ for each $V\in Mod-\mathcal R$. Taken together,
such maps induce a canonical morphism $i_\tau:\mathscr L\longrightarrow \mathbb E_\tau(\mathscr L)$ for each $\mathscr L\in FI_{\mathcal R}$.

\smallskip
 From the construction in \eqref{envel4}, it is clear that $i_\tau(S):\mathscr L(S)\longrightarrow \mathbb E_\tau(\mathscr L)(S)$ is a $\mathcal T$-isomorphism in $Mod-\mathcal R$
for each finite set $S$. In other words, $Ker(i_\tau(S))$, $Coker(i_\tau(S))\in \mathcal T$. Hence, $Ker(i_\tau)$, $Coker(i_\tau)\in \hat{\mathcal T^0}$ and it follows that
$i_\tau$ is a  $\hat{\mathcal T^0}$-isomorphism. From Lemma \ref{Lem4.5} and the definition in \eqref{envel4}, it is clear that $\mathbb E_\tau(\mathscr L)$ is $\hat{\mathcal T^0}$-closed. By Definition \ref{D4.1}, it follows that $ i_\tau :\mathscr L\longrightarrow \mathbb E_\tau(\mathscr L)$  is a $\hat{\mathcal T^0}$-envelope for $\mathscr L$. 

\smallskip
If we now suppose that $\mathscr L$ is $\hat{\mathcal T^0}$-closed and  $\hat{\mathcal T^0}$ is a hereditary torsion class, it follows from the uniqueness of the $\hat{\mathcal T^0}$-envelope that $i_\tau:\mathscr L\longrightarrow \mathbb E_\tau(\mathscr L)$ is an isomorphism. In particular, it follows that $\mathscr L(S)\cong \mathbb E_\tau(\mathscr L(S))$ is $\mathcal T$-closed in $Mod-\mathcal R$ for each
finite set $S$. This proves the ``only if'' part of the result. The ``if part'' is clear from Lemma \ref{Lem4.5}.

\end{proof}

From the definition of the subcategories $\{\hat{\mathcal T^a}\}_{a\geq 0}$, it  is clear that we have a descending filtration
\begin{equation}
Cl(\hat{\mathcal T^0})\supseteq Cl(\hat{\mathcal T^1})\supseteq \dots \qquad \dots \supseteq Cl(\hat{\mathcal T^a})\supseteq Cl(\hat{\mathcal T}^{a+1})\supseteq \dots 
\end{equation} In order to obtain functors going in the other direction, we will need to use the right adjoint of the shift functor $\mathbb S:=\mathbb S^1$. 

\begin{lem}\label{L4.7} For each $a\geq 0$, the functor $\mathbb S^a:FI_{\mathcal R}\longrightarrow FI_{\mathcal R}$ has a right adjoint 
$\mathbb T^a:FI_{\mathcal R}\longrightarrow FI_{\mathcal R}$.
\end{lem}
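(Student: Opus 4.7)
\emph{Proof plan.} The plan is to realise $\mathbb{T}^a$ as a right Kan extension. First observe that $\mathbb{S}^a: FI_{\mathcal{R}} \to FI_{\mathcal{R}}$ is precisely precomposition with the endofunctor $\sigma^a: FI \to FI$ defined by $S \mapsto S \sqcup [-a]$; indeed $\mathbb{S}^a \mathscr{V} = \mathscr{V} \circ \sigma^a$ for every $\mathscr{V} \in FI_{\mathcal{R}}$, with $\sigma^a$ acting on morphisms by extending by the identity on $[-a]$.

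Since $FI$ is essentially small (as recalled at the beginning of Section 2) and $Mod-\mathcal{R}$ is complete (being a Grothendieck category), the pointwise right Kan extension of any $\mathscr{W}: FI \to Mod-\mathcal{R}$ along $\sigma^a$ exists. Explicitly, at each finite set $S$ one defines
\begin{equation*}
\mathbb{T}^a \mathscr{W}(S) \;:=\; \lim_{(T,\, \phi: S \to T \sqcup [-a]) \in (S \downarrow \sigma^a)} \mathscr{W}(T),
\end{equation*}
and functoriality of $\mathbb{T}^a$ in both $S$ and $\mathscr{W}$ is immediate from the universal property of limits. The required natural bijection
\begin{equation*}
Hom_{FI_{\mathcal{R}}}(\mathbb{S}^a \mathscr{V},\, \mathscr{W}) \;\cong\; Hom_{FI_{\mathcal{R}}}(\mathscr{V},\, \mathbb{T}^a \mathscr{W})
\end{equation*}
is then nothing other than the defining universal property of the right Kan extension along a precomposition functor.

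Alternatively, one could appeal directly to an adjoint functor theorem: it was noted just above Proposition \ref{P3.1} that $\mathbb{S}^a$ preserves all colimits, and by Proposition \ref{L2.4}(b) the category $FI_{\mathcal{R}}$ is a Grothendieck category equipped with a set of generators $\{_d\mathscr{M}_r\}_{d \geq 0,\, r \in \mathcal{R}}$, so every cocontinuous endofunctor on $FI_{\mathcal{R}}$ automatically admits a right adjoint. No serious obstacle is anticipated; the argument is essentially a direct invocation of standard Kan-extension machinery. The explicit limit formula is worth recording because subsequent constructions (notably Proposition \ref{P4.10} and the definition of $\mathbb{L}_\tau^k = \mathbb{T}^k \circ \mathbb{S}^k \circ \mathbb{E}_\tau$ appearing in Theorem \ref{InTh3q}) will rely on it.
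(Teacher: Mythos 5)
Your Kan-extension argument is correct but takes a different route from the paper. The paper simply observes that $\mathbb{S}=\mathbb{S}^1$ is a colimit-preserving endofunctor of the Grothendieck category $FI_{\mathcal{R}}$ and invokes a standard adjoint functor theorem (citing Kashiwara--Schapira, Theorem 8.3.27) to produce $\mathbb{T}=\mathbb{T}^1$, then sets $\mathbb{T}^a := \mathbb{T}\circ \cdots \circ \mathbb{T}$ ($a$ times); this is precisely the ``alternative'' you sketch in your last paragraph. Your primary argument---realising $\mathbb{T}^a$ directly as the pointwise right Kan extension along $\sigma^a: S \mapsto S\sqcup[-a]$, using that $FI$ is essentially small and $Mod\text{-}\mathcal{R}$ is complete---is equally valid and more constructive: it yields an explicit limit formula for $\mathbb{T}^a\mathscr{W}(S)$ over the comma category $(S\downarrow\sigma^a)$. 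What the adjoint-functor-theorem approach buys is brevity and the convenient packaging of $\mathbb{T}^a$ as a power of $\mathbb{T}$ (used implicitly when the paper writes $\mathbb{L}_\tau^{k+1}=\mathbb{T}^k\circ(\mathbb{T}\circ\mathbb{S})\circ\mathbb{S}^k\circ\mathbb{E}_\tau$ in Proposition \ref{P4.10}); what the Kan-extension approach buys is an explicit description, which you single out as potentially useful. One small overstatement: the paper does not in fact rely on the explicit limit formula in Propositions \ref{P4.85}--\ref{P4.11}---those arguments use only the adjunction isomorphism and the decomposition $\mathbb{S}^a({}_d\mathscr{M}_r) = {}_d\mathscr{M}_r\oplus{}_d\mathscr{N}_r$ from Corollary \ref{C3.15}---so the closing sentence of your proposal is a motivation rather than a necessity.
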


\begin{proof} Since $FI_{\mathcal R}$ is a Grothendieck category and $\mathbb S=\mathbb S^1:FI_{\mathcal R}\longrightarrow FI_{\mathcal R}$ preserves colimits,
it follows (see, for instance, \cite[Theorem 8.3.27]{KaSc}) that it must have a right adjoint $\mathbb T=\mathbb T^1:FI_{\mathcal R}\longrightarrow FI_{\mathcal R}$. Then, for each $a\geq 0$, $\mathbb T^a$ is a right
adjoint of $\mathbb S^a$. 

\end{proof}

\begin{lem}\label{Lem4.8} Let $k\geq 0$ and let $u:\mathscr A\longrightarrow \mathscr B$ be a $\hat{\mathcal T}^k$-isomorphism.  Then, for each
$0\leq a\leq k$, the induced morphism $\mathbb S^a(u):\mathbb S^a(\mathscr A)\longrightarrow \mathbb S^a(\mathscr B)$ is a $\hat{\mathcal T}^{k-a}$-isomorphism.
\end{lem}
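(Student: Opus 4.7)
The plan is to reduce the claim to the behavior of $\mathbb S^a$ on the subcategories $\hat{\mathcal T}^k$, using the exactness of the shift functor. The computation is essentially an index shift, so the argument should be very short and direct.

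First, I would record the key identity: for any $\mathscr V\in FI_{\mathcal R}$ and any $n\geq 0$, we have
\begin{equation*}
(\mathbb S^a\mathscr V)_n = \mathscr V([n]\sqcup[-a]) \cong \mathscr V_{n+a},
\end{equation*}
by the definition of $\mathbb S^a$ and the fact that $FI$-modules are defined on a skeleton $\{[n]\}_{n\geq 0}$. Using this, I would check that if $\mathscr V\in\hat{\mathcal T}^k$ and $0\leq a\leq k$, then $\mathbb S^a\mathscr V\in\hat{\mathcal T}^{k-a}$: indeed, for every $n\geq k-a$ one has $n+a\geq k$, so $(\mathbb S^a\mathscr V)_n\cong \mathscr V_{n+a}\in\mathcal T$. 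Thus $\mathbb S^a$ restricts to a functor $\hat{\mathcal T}^k\longrightarrow\hat{\mathcal T}^{k-a}$.

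Next, since the shift functor $\mathbb S^a$ preserves all limits and colimits (this was noted immediately after its definition in Section 3), it is in particular exact. Applying $\mathbb S^a$ to the canonical exact sequences
\begin{equation*}
0\longrightarrow Ker(u)\longrightarrow\mathscr A\overset{u}{\longrightarrow}\mathscr B\longrightarrow Coker(u)\longrightarrow 0
\end{equation*}
therefore yields natural identifications $Ker(\mathbb S^a u)\cong \mathbb S^a Ker(u)$ and $Coker(\mathbb S^a u)\cong \mathbb S^a Coker(u)$. By hypothesis $Ker(u),Coker(u)\in\hat{\mathcal T}^k$, so the previous paragraph gives $Ker(\mathbb S^a u),Coker(\mathbb S^a u)\in\hat{\mathcal T}^{k-a}$, which is exactly the statement that $\mathbb S^a(u)$ is a $\hat{\mathcal T}^{k-a}$-isomorphism.

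There is no serious obstacle here: the only point one has to be careful about is the index bookkeeping $n\geq k-a\ \Rightarrow\ n+a\geq k$, which requires the hypothesis $a\leq k$ so that $k-a\geq 0$ and the notation $\hat{\mathcal T}^{k-a}$ makes sense. Everything else is immediate from the exactness of $\mathbb S^a$ and the defining condition of $\hat{\mathcal T}^k$ in \eqref{4.3ne}.
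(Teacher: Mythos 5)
Your proof is correct and follows essentially the same route as the paper: you compute $(\mathbb S^a\mathscr V)_n\cong\mathscr V_{n+a}$, observe that exactness of $\mathbb S^a$ gives $Ker(\mathbb S^a u)=\mathbb S^a Ker(u)$ and $Coker(\mathbb S^a u)=\mathbb S^a Coker(u)$, and then shift indices. The only (cosmetic) difference is that you factor out the statement ``$\mathbb S^a$ restricts to a functor $\hat{\mathcal T}^k\to\hat{\mathcal T}^{k-a}$'' as an intermediate step, whereas the paper performs the same computation inline on $Ker(\mathbb S^a u)(S)$ and $Coker(\mathbb S^a u)(S)$ directly.
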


\begin{proof}
For any finite set $S$, we know that 
\begin{equation}\label{eq4.12}
\begin{array}{c} Ker(\mathbb S^a(u))(S)=\mathbb S^a(Ker(u))(S)=Ker(u)(S\sqcup [-a])\\ Coker(\mathbb S^a(u))(S)=\mathbb S^a(Coker(u))(S)=Coker(u)(S\sqcup [-a])
\end{array}
\end{equation} Since $u:\mathscr A\longrightarrow \mathscr B$ is a $\hat{\mathcal T}^k$-isomorphism, it is clear from \eqref{eq4.12} that 
when $|S|\geq k-a$, both $Ker(\mathbb S^a(u))(S)$, $Coker(\mathbb S^a(u))(S)\in \mathcal T$. The result follows. 
\end{proof}

Before we proceed further, we record here the following observation about the functor $\mathbb T$. 

\begin{thm}\label{P4.85}Let $a$, $d\geq 0$ and $r\in \mathcal R$. Then, for any $\mathscr V\in FI_{\mathcal R}$, $\mathscr V(d)(r)$ is a direct summand of $\mathbb T^a(\mathscr V)(d)(r)$. 

\end{thm}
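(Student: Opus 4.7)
The plan is to compute $\mathbb{T}^a(\mathscr{V})(d)(r)$ via the Yoneda-style identification from Lemma \ref{L2.3} together with the adjunction $\mathbb{S}^a \dashv \mathbb{T}^a$ established in Lemma \ref{L4.7}, and then use the splitting of $\mathbb{S}^a({_{d}\mathscr{M}_{r}})$ recorded in Corollary \ref{C3.15} to exhibit $\mathscr{V}(d)(r)$ as a direct summand.

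More precisely, the first step is to apply Lemma \ref{L2.3} to the $FI$-module $\mathbb{T}^a(\mathscr{V})$, obtaining the natural isomorphism
\begin{equation*}
\mathbb{T}^a(\mathscr{V})(d)(r) \;\cong\; FI_{\mathcal{R}}({_{d}\mathscr{M}_{r}},\,\mathbb{T}^a(\mathscr{V})).
\end{equation*}
Next, by the adjunction of Lemma \ref{L4.7}, this identifies with $FI_{\mathcal{R}}(\mathbb{S}^a({_{d}\mathscr{M}_{r}}),\mathscr{V})$. At this point Corollary \ref{C3.15} supplies a direct sum decomposition $\mathbb{S}^a({_{d}\mathscr{M}_{r}}) = {_{d}\mathscr{M}_{r}} \oplus {_{d}\mathscr{N}_{r}}$.

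Combining these identifications,
\begin{equation*}
\mathbb{T}^a(\mathscr{V})(d)(r) \;\cong\; FI_{\mathcal{R}}({_{d}\mathscr{M}_{r}},\mathscr{V}) \,\oplus\, FI_{\mathcal{R}}({_{d}\mathscr{N}_{r}},\mathscr{V}) \;\cong\; \mathscr{V}(d)(r) \,\oplus\, FI_{\mathcal{R}}({_{d}\mathscr{N}_{r}},\mathscr{V}),
\end{equation*}
where the last isomorphism uses Lemma \ref{L2.3} once more. This directly exhibits $\mathscr{V}(d)(r)$ as a direct summand of $\mathbb{T}^a(\mathscr{V})(d)(r)$.

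There is no real obstacle here, since Corollary \ref{C3.15} is already doing the essential combinatorial work; the only thing to keep track of is that the summand ${_{d}\mathscr{M}_{r}}$ appearing in the decomposition of $\mathbb{S}^a({_{d}\mathscr{M}_{r}})$ corresponds, via the adjunction, precisely to evaluating at $(d,r)$. Concretely, this summand is indexed by the unique injection $\emptyset \hookrightarrow [-a]$, i.e.\ by those injections $[d] \hookrightarrow S \sqcup [-a]$ whose image lies entirely in $S$, so under the adjunction unit it recovers the original identity on $\mathscr{V}(d)(r)$; thus the splitting of Hom sets above is the splitting stated in the proposition.
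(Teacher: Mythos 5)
Your proof is correct and is essentially identical to the paper's own argument: both pass through Lemma \ref{L2.3}, the adjunction $(\mathbb S^a,\mathbb T^a)$, and the splitting $\mathbb S^a({_{d}\mathscr M_{r}})={_{d}\mathscr M_{r}}\oplus {_{d}\mathscr N_{r}}$ from Corollary \ref{C3.15}. Your final paragraph identifying the summand combinatorially is a nice supplementary remark but not needed beyond what the chain of identifications already gives.
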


\begin{proof} From Lemma \ref{L2.3}, we know that $\mathbb T^a(\mathscr V)(d)(r)=FI_{\mathcal R}(_d\mathscr M_r,\mathbb T^a(\mathscr V))$ for any $d\geq 0$ and $r\in \mathcal R$. Using the adjoint pair $(\mathbb S^a,\mathbb T^a)$ and Corollary \ref{C3.15}, we obtain
\begin{equation*} \mathbb T^a(\mathscr V)(d)(r)=FI_{\mathcal R}(_d\mathscr M_r,\mathbb T^a(\mathscr V))=FI_{\mathcal R}(\mathbb S^a(_d\mathscr M_r),\mathscr V)=FI_{\mathcal R}({_{d}\mathscr M_{r}}\oplus {_{d}\mathscr N_{r}},\mathscr V)=\mathscr V(d)(r)\oplus FI_{\mathcal R}( {_{d}\mathscr N_{r}},\mathscr V)
\end{equation*}

\end{proof}

\begin{thm}\label{P4.9}
For any $a$, $b\geq 0$, the right adjoint $\mathbb T^a:FI_{\mathcal R}\longrightarrow FI_{\mathcal R}$ restricts to a functor 
$\mathbb T^a:Cl(\hat{\mathcal T}^b)\longrightarrow Cl(\hat{\mathcal T}^{a+b})$.
\end{thm}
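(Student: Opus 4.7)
The strategy is to reduce the $\hat{\mathcal T}^{a+b}$-closedness of $\mathbb T^a(\mathscr L)$ to the $\hat{\mathcal T}^b$-closedness of $\mathscr L$ via the adjunction $(\mathbb S^a, \mathbb T^a)$ from Lemma \ref{L4.7}, combined with the compatibility of $\mathbb S^a$ with the filtration $\{\hat{\mathcal T}^k\}_{k\geq 0}$ established in Lemma \ref{Lem4.8}.

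Concretely, suppose $\mathscr L \in Cl(\hat{\mathcal T}^b)$ and let $u:\mathscr A \longrightarrow \mathscr B$ be an arbitrary $\hat{\mathcal T}^{a+b}$-isomorphism in $FI_{\mathcal R}$. I would first apply Lemma \ref{Lem4.8} with $k = a+b$ to conclude that $\mathbb S^a(u):\mathbb S^a(\mathscr A)\longrightarrow \mathbb S^a(\mathscr B)$ is a $\hat{\mathcal T}^{(a+b)-a}=\hat{\mathcal T}^b$-isomorphism. Since $\mathscr L$ is by hypothesis $\hat{\mathcal T}^b$-closed, the induced map
\begin{equation*}
Hom(\mathbb S^a(u),\mathscr L):Hom(\mathbb S^a(\mathscr B),\mathscr L)\longrightarrow Hom(\mathbb S^a(\mathscr A),\mathscr L)
\end{equation*}
is an isomorphism by Definition \ref{D4.1}(2).

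Next I would transport this isomorphism through the adjunction $(\mathbb S^a,\mathbb T^a)$. By naturality of the adjunction bijection in the first variable, the following diagram commutes:
\begin{equation*}
\begin{CD}
Hom(\mathscr B,\mathbb T^a(\mathscr L)) @>\cong>> Hom(\mathbb S^a(\mathscr B),\mathscr L) \\
@VVHom(u,\mathbb T^a(\mathscr L))V @VVHom(\mathbb S^a(u),\mathscr L)V \\
Hom(\mathscr A,\mathbb T^a(\mathscr L)) @>\cong>> Hom(\mathbb S^a(\mathscr A),\mathscr L)
\end{CD}
\end{equation*}
Since the right vertical arrow is an isomorphism and the horizontal arrows are the adjunction bijections, the left vertical arrow $Hom(u,\mathbb T^a(\mathscr L))$ is also an isomorphism. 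As $u$ was an arbitrary $\hat{\mathcal T}^{a+b}$-isomorphism, this shows $\mathbb T^a(\mathscr L)\in Cl(\hat{\mathcal T}^{a+b})$.

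There is no real obstacle here beyond bookkeeping: the argument is a formal consequence of adjunction together with the key observation from Lemma \ref{Lem4.8} that $\mathbb S^a$ shifts the index of the filtration $\{\hat{\mathcal T}^k\}$ downward by $a$. I would not need the strengthened criterion in Proposition \ref{P4.4}, since the adjunction lets us work with all $\hat{\mathcal T}^{a+b}$-isomorphisms directly.
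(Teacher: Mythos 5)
Your proof is correct and is essentially identical to the paper's: both apply Lemma \ref{Lem4.8} to see that $\mathbb S^a(u)$ is a $\hat{\mathcal T}^b$-isomorphism and then transport the resulting isomorphism on Hom-sets through the $(\mathbb S^a,\mathbb T^a)$ adjunction via the naturality square. The paper likewise does not invoke Proposition \ref{P4.4} here.
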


\begin{proof}
We consider some $\mathscr L\in Cl(\hat{\mathcal T}^b)$ and $u:\mathscr A\longrightarrow \mathscr B$ in $FI_{\mathcal R}$ that is a $\hat{\mathcal T}^{a+b}$-isomorphism. We consider the commutative diagram:
\begin{equation}\label{eq4.13}
 \begin{CD}
FI_{\mathcal R}(\mathscr B,\mathbb T^a\mathscr L) @>FI_{\mathcal R}(u,\mathbb T^a\mathscr L)>>  FI_{\mathcal R}(\mathscr A,\mathbb T^a\mathscr L)\\
 @V\cong VV @VV\cong V\\
FI_{\mathcal R}(\mathbb S^a(\mathscr B),\mathscr L) @>FI_{\mathcal R}(\mathbb S^a(u),\mathscr L)>\cong > FI_{\mathcal R}(\mathbb S^a(\mathscr A),\mathscr L)\\
 \end{CD}
\end{equation} Here, it follows from Lemma \ref{Lem4.8} that the lower horizontal arrow is an isomorphism. This proves the result. 
\end{proof}

We can now give functors that explicitly construct objects in $Cl(\hat{\mathcal T}^k)$.

\begin{thm}\label{P4.10} Let $\tau=(\mathcal T,\mathcal F)$ be a hereditary torsion theory on $Mod-\mathcal R$. Then, we have functors
\begin{equation}\label{eq4.14} 
\mathbb L^k_\tau:=\mathbb T^k\circ \mathbb S^k\circ \mathbb E_\tau: FI_{\mathcal R}\longrightarrow Cl(\hat{\mathcal T}^k)\qquad\forall\textrm{ }k\geq 0
\end{equation} Additionally, there are  canonical morphisms of functors $l^k_\tau: Id\longrightarrow \mathbb L^k_\tau$ such that $l^{k+1}_\tau=
c^k_\tau\circ l^k_\tau$, where $c^k_\tau: \mathbb L^k_\tau=\mathbb T^k\circ \mathbb S^k\circ \mathbb E_\tau\longrightarrow \mathbb L^{k+1}_\tau=
\mathbb T^k\circ (\mathbb T\circ \mathbb S)\circ \mathbb S^k\circ \mathbb E_\tau$  is induced by the counit corresponding
to the adjoint pair $(\mathbb S,\mathbb T)$. 

\end{thm}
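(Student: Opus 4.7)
The plan has two parts: first verify that the image of $\mathbb L^k_\tau = \mathbb T^k \circ \mathbb S^k \circ \mathbb E_\tau$ really lies in $Cl(\hat{\mathcal T}^k)$, and then construct the natural transformations $l^k_\tau$ and check the compatibility $l^{k+1}_\tau = c^k_\tau \circ l^k_\tau$. Both are essentially bookkeeping with the tools already built up in Section 4.

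For the first part, I would argue by composing three facts in sequence. Given $\mathscr V \in FI_{\mathcal R}$, Proposition \ref{P4.6} tells us that $\mathbb E_\tau(\mathscr W)$ is $\hat{\mathcal T}^0$-closed for any $\mathscr W$, because $\mathbb E_\tau(\mathscr W)(S) = \mathbb E_\tau(\mathscr W(S))$ is $\mathcal T$-closed in $Mod-\mathcal R$ for every finite set $S$. Apply this to $\mathscr W := \mathbb S^k \mathscr V$ and use the identity $\mathbb S^a \mathbb E_\tau = \mathbb E_\tau \mathbb S^a$ from \eqref{envel5} to conclude that $\mathbb S^k \mathbb E_\tau(\mathscr V) = \mathbb E_\tau \mathbb S^k(\mathscr V)$ belongs to $Cl(\hat{\mathcal T}^0)$. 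Finally invoke Proposition \ref{P4.9} with $b = 0$: the functor $\mathbb T^k$ restricts to $Cl(\hat{\mathcal T}^0) \longrightarrow Cl(\hat{\mathcal T}^k)$, so $\mathbb T^k \mathbb S^k \mathbb E_\tau(\mathscr V) \in Cl(\hat{\mathcal T}^k)$ as required.

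For the construction of $l^k_\tau$, I would combine two natural transformations that are already available: the torsion-envelope map $i_\tau : Id \longrightarrow \mathbb E_\tau$ from \eqref{envel4}, and the unit $\eta^k : Id \longrightarrow \mathbb T^k \mathbb S^k$ of the adjunction $(\mathbb S^k, \mathbb T^k)$ of Lemma \ref{L4.7}. Setting
\begin{equation*}
l^k_\tau(\mathscr V) := \eta^k(\mathbb E_\tau \mathscr V) \circ i_\tau(\mathscr V) : \mathscr V \longrightarrow \mathbb E_\tau \mathscr V \longrightarrow \mathbb T^k \mathbb S^k \mathbb E_\tau \mathscr V
\end{equation*}
produces a natural transformation $Id \longrightarrow \mathbb L^k_\tau$ because both factors are natural. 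The morphism $c^k_\tau$ in the statement is the natural transformation $\mathbb T^k (\eta \mathbb S^k \mathbb E_\tau)$ obtained by whiskering the unit $\eta : Id \longrightarrow \mathbb T \mathbb S$ of $(\mathbb S, \mathbb T)$ (the paper says ``counit'' but the standard convention for the left adjoint $\mathbb S$ makes this the unit); inserting it between $\mathbb T^k$ and $\mathbb S^k \mathbb E_\tau$ yields precisely the target $\mathbb T^{k+1} \mathbb S^{k+1} \mathbb E_\tau$.

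The compatibility $l^{k+1}_\tau = c^k_\tau \circ l^k_\tau$ is then just the standard fact for iterated adjunctions that the unit of $(\mathbb S^{k+1}, \mathbb T^{k+1})$ factors as
\begin{equation*}
\eta^{k+1} = (\mathbb T^k \, \eta \, \mathbb S^k) \circ \eta^k,
\end{equation*}
applied to the object $\mathbb E_\tau(\mathscr V)$ and composed on the right with $i_\tau(\mathscr V)$. I would verify this identity by the usual triangle-identity manipulation, or more concretely by writing $\mathbb S^{k+1} = \mathbb S \circ \mathbb S^k$ and $\mathbb T^{k+1} = \mathbb T^k \circ \mathbb T$ and unfolding the adjunction bijection. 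I expect the only real subtlety to be notational: keeping straight which natural transformations are whiskered on which side (so that $c^k_\tau$ is really $\mathbb T^k \eta \mathbb S^k \mathbb E_\tau$, not the other order), and reconciling the paper's use of ``counit'' with the canonical unit $\eta: Id \to \mathbb T \mathbb S$ that actually fits the diagram. Beyond that, no analytic or category-theoretic obstacle appears, since all the heavy lifting has already been done in Propositions \ref{P4.6} and \ref{P4.9}.
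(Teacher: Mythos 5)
Your argument is essentially identical to the paper's: the closedness of $\mathbb L^k_\tau(\mathscr V)$ is obtained by chaining Proposition~\ref{P4.6}, the identity \eqref{envel5}, and Proposition~\ref{P4.9} exactly as the paper does, and the compatibility $l^{k+1}_\tau = c^k_\tau \circ l^k_\tau$ is the same factorization of the unit of $(\mathbb S^{k+1},\mathbb T^{k+1})$ that the paper records as a commutative triangle. Your observation that the natural transformation $\mathrm{Id}\longrightarrow \mathbb T\circ\mathbb S$ whiskered to get $c^k_\tau$ is the \emph{unit} rather than the ``counit'' the statement names is also correct; this appears to be a terminological slip in the paper, and your spelled-out construction of $l^k_\tau = \eta^k(\mathbb E_\tau\mathscr V)\circ i_\tau(\mathscr V)$ makes the diagram the paper merely draws fully explicit.
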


\begin{proof}
For any $\mathscr V\in FI_{\mathcal R}$, it is clear from Proposition \ref{P4.6} that  $\mathbb E_\tau(\mathscr V)\in Cl(\hat{\mathcal T}^0)$. From \eqref{envel5}, it is now clear that 
$\mathbb S^k(\mathbb E_\tau(\mathscr V))=\mathbb E_\tau(\mathbb S^k(\mathscr V))\in Cl(\hat{\mathcal T}^0)$. It now follows from Proposition \ref{P4.9} that $\mathbb L^k_\tau(\mathscr V)=\mathbb T^k\circ \mathbb S^k\circ \mathbb E_\tau(\mathscr V)\in Cl(\hat{\mathcal T}^k)$. We recall that we have a canonical morphism $i_\tau(\mathscr V'):\mathscr V'
\longrightarrow \mathbb E_\tau(\mathscr V')$ for each $\mathscr V'\in FI_{\mathbb R}$. Using the  adjunctions $(\mathbb S,\mathbb T)$, $(\mathbb S^k,\mathbb T^k)$ 
and $(\mathbb S^{k+1},\mathbb T^{k+1})$, we now have a commutative diagram
\begin{equation*}
\begin{tikzpicture}
      \matrix[matrix of math nodes,column sep=5pc,row sep=3em]
      {
                         |(A)| \mathscr V    &          \\
     |(C)|  \mathbb L^k_\tau(\mathscr V)  =\mathbb T^k\mathbb S^k\mathbb E_\tau(\mathscr V)  & |(D)|   \mathbb T^k(\mathbb T\circ \mathbb S)\mathbb S^k\mathbb E_\tau(\mathscr V) =
      \mathbb L^{k+1}_\tau(\mathscr V)    \\
      };
      \begin{scope}[->]
             \draw (A)  to node {$l^k_\tau(\mathscr V)\qquad\qquad$} (C);
         \draw (C) edge node [below] {$c^k_\tau(\mathscr V)$}  (D);
         \draw (A)  to node {$\qquad\qquad\qquad l^{k+1}_\tau(\mathscr V)$} (D);
      \end{scope}
   \end{tikzpicture}
   \end{equation*} The result follows. 
\end{proof}

We are now ready to construct a functor that gives objects that are closed with respect to $\hat{\mathcal T}$.

\begin{Thm}\label{P4.11}
Let $Mod-\mathcal R$ be locally noetherian. Let $\tau=(\mathcal T,\mathcal F)$ be a hereditary torsion theory on $Mod-\mathcal R$. Then, we have a functor $\mathbb L_\tau:FI_{\mathcal R}\longrightarrow Cl(\hat{\mathcal T})$ and a canonical morphism
\begin{equation}
l_\tau (\mathscr V): \mathscr V\longrightarrow \mathbb L_\tau(\mathscr V):=\underset{k\geq 0}{\varinjlim}\textrm{ }\mathbb L_\tau^k(\mathscr V)
\end{equation} for each $\mathscr V\in FI_{\mathcal R}$.
\end{Thm}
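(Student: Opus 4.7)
The plan is to define $\mathbb L_\tau(\mathscr V) := \varinjlim_{k\geq 0}\mathbb L^k_\tau(\mathscr V)$ as the filtered (sequential) colimit along the transition morphisms $c^k_\tau(\mathscr V):\mathbb L^k_\tau(\mathscr V)\longrightarrow \mathbb L^{k+1}_\tau(\mathscr V)$ provided by Proposition \ref{P4.10}. Since filtered colimits in the Grothendieck category $FI_{\mathcal R}$ are functorial, this gives a functor $\mathbb L_\tau:FI_{\mathcal R}\longrightarrow FI_{\mathcal R}$. The compatibility relation $l^{k+1}_\tau=c^k_\tau\circ l^k_\tau$ makes the family $\{l^k_\tau(\mathscr V)\}_k$ compatible with the transition maps, so the universal property of the colimit produces a canonical morphism $l_\tau(\mathscr V):\mathscr V\longrightarrow \mathbb L_\tau(\mathscr V)$, natural in $\mathscr V$. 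It remains to check that $\mathbb L_\tau(\mathscr V)\in Cl(\hat{\mathcal T})$ for every $\mathscr V$.

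Since $Mod-\mathcal R$ is locally noetherian, so is $FI_{\mathcal R}$ by Theorem \ref{Th2.8}, and Proposition \ref{P4.4} reduces the verification of $\hat{\mathcal T}$-closedness to checking that $Hom(u,\mathbb L_\tau(\mathscr V))$ is an isomorphism for $\hat{\mathcal T}$-isomorphisms $u:\mathscr A\longrightarrow \mathscr B$ with $\mathscr A,\mathscr B\in FI_{\mathcal R}^{fg}$. For such a $u$, both $Ker(u)$ and $Coker(u)$ are finitely generated (as a subobject and quotient in a locally noetherian category) and lie in $\hat{\mathcal T}$; hence they lie in $\hat{\mathcal T}\cap FI_{\mathcal R}^{fg}=\overline{\mathcal T}$. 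Consequently there exists $a\geq 0$ with $Ker(u)_n,Coker(u)_n\in \mathcal T$ for all $n\geq a$, so $u$ is in fact a $\hat{\mathcal T^a}$-isomorphism.

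I then exploit the descending chain $Cl(\hat{\mathcal T^a})\supseteq Cl(\hat{\mathcal T}^{a+1})\supseteq \dots$, which follows from the filtration \eqref{filtne}: any $\hat{\mathcal T^a}$-isomorphism is a fortiori a $\hat{\mathcal T^k}$-isomorphism for each $k\geq a$. By Proposition \ref{P4.10}, $\mathbb L^k_\tau(\mathscr V)\in Cl(\hat{\mathcal T^k})\subseteq Cl(\hat{\mathcal T^a})$ for every $k\geq a$, so $Hom(u,\mathbb L^k_\tau(\mathscr V))$ is an isomorphism for every such $k$. Because finitely generated objects in a locally noetherian Grothendieck category coincide with finitely presented objects, both $Hom_{FI_{\mathcal R}}(\mathscr A,-)$ and $Hom_{FI_{\mathcal R}}(\mathscr B,-)$ commute with the filtered colimit defining $\mathbb L_\tau(\mathscr V)$. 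Passing to the colimit over the cofinal subsystem $\{k\geq a\}$ yields the desired isomorphism $Hom(u,\mathbb L_\tau(\mathscr V)):Hom(\mathscr B,\mathbb L_\tau(\mathscr V))\overset{\cong}{\longrightarrow} Hom(\mathscr A,\mathbb L_\tau(\mathscr V))$. The main obstacle is precisely this commutation of Hom with the filtered colimit, which is the reason Proposition \ref{P4.4} — which lets us restrict to finitely generated (hence finitely presented) sources and targets — is indispensable here.
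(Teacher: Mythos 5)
Your proof is correct and follows essentially the same route as the paper: use Proposition \ref{P4.4} to reduce to $\hat{\mathcal T}$-isomorphisms $u$ between finitely generated (hence, in the locally noetherian category $FI_{\mathcal R}$, finitely presented) objects, pull $Hom(\mathscr A,-)$ and $Hom(\mathscr B,-)$ through the filtered colimit defining $\mathbb L_\tau(\mathscr V)$, and observe that $u$ is a $\hat{\mathcal T^k}$-isomorphism for all sufficiently large $k$ so that $Hom(u,\mathbb L^k_\tau(\mathscr V))$ is eventually an isomorphism. The only (harmless) extra detour you take is passing through $\overline{\mathcal T}$ to get the uniform bound $a$; since $Ker(u),Coker(u)\in\hat{\mathcal T}$ by definition of a $\hat{\mathcal T}$-isomorphism, the existence of such an $a$ is immediate even without noting that they are finitely generated.
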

\begin{proof}
For $\mathscr V\in FI_{\mathcal R}$, it follows from Proposition \ref{P4.10} that the morphisms $l^k_\tau(\mathscr V):\mathscr V\longrightarrow \mathbb L^k_\tau(\mathscr V)$ combine to give a morphism
$l_\tau (\mathscr V): \mathscr V\longrightarrow \mathbb L_\tau(\mathscr V)=\underset{k\geq 0}{\varinjlim}\textrm{ }\mathbb L_\tau^k(\mathscr V)$. We need to check
that $\mathbb L_\tau(\mathscr V)$ is $\hat{\mathcal T}$-closed. For this, we will show that for any $\hat{\mathcal T}$-isomorphism $u:\mathscr A\longrightarrow\mathscr B$, the induced morphism $FI_{\mathcal R}(u, \mathbb L_\tau(\mathscr V)):FI_{\mathcal R}(\mathscr B, \mathbb L_\tau(\mathscr V))\longrightarrow FI_{\mathcal R}(\mathscr A, \mathbb L_\tau(\mathscr V))$ is an isomorphism.

\smallskip Using Proposition \ref{P4.4}, we may restrict ourselves to the case where $\mathscr A$, $\mathscr B$ are finitely generated. Since $FI_{\mathcal R}$ is a locally noetherian category, 
it follows that $\mathscr A$, $\mathscr B$ are also finitely presented, i.e., the functors $FI_{\mathcal R}(\mathscr A,\_\_)$, $FI_{\mathcal R}(\mathscr B,\_\_)$ preserve filtered colimits. We now consider the morphism
\begin{equation*}
FI_{\mathcal R}(u, \mathbb L_\tau(\mathscr V)):FI_{\mathcal R}(\mathscr B, \mathbb L_\tau(\mathscr V))=\underset{k\geq 0}{\varinjlim}\textrm{ }FI_{\mathcal R}(\mathscr B, \mathbb L_\tau^k(\mathscr V))\longrightarrow \underset{k\geq 0}{\varinjlim}\textrm{ }FI_{\mathcal R}(\mathscr A, \mathbb L_\tau^k(\mathscr V))=FI_{\mathcal R}(\mathscr A, \mathbb L_\tau(\mathscr V))
\end{equation*} Since $u:\mathscr A\longrightarrow\mathscr B$ is a $\hat{\mathcal T}$-isomorphism, we can choose $N$ large enough so that $Ker(u)(S)$, $Coker(u)(S)\in 
\mathcal T$ for finite sets $S$ of cardinality $\geq N$. Hence, $u:\mathscr A\longrightarrow\mathscr B$ is a $\hat{\mathcal T}^k$-isomorphism for each $k\geq N$. Since
$\mathbb L^k_\tau(\mathscr V)$ is $\hat{\mathcal T}^k$-closed, it follows that $FI_{\mathcal R}(\mathscr B, \mathbb L_\tau^k(\mathscr V))\longrightarrow FI_{\mathcal R}(\mathscr A, \mathbb L_\tau^k(\mathscr V)$ is an isomorphism for each $k\geq N$. The result is now clear. 
\end{proof}

\section{The torsion class $\widetilde{\mathcal T}$ and its closed objects}

\smallskip
We continue with $Mod-\mathcal R$ being a locally noetherian category and $\tau=(\mathcal T,\mathcal F)$ being a hereditary torsion theory on $Mod-\mathcal R$. In Section 3, we used this torsion theory 
to construct a torsion class $\overline{\mathcal T}$ in the category $FI^{fg}_{\mathcal R}$ of finitely generated modules.  In Section 4, we considered the Serre subcategory
$\hat{\mathcal T}\subseteq FI_{\mathcal R}$ which was constructed so that $\hat{\mathcal T}\cap FI^{fg}_{\mathcal R}=\overline{\mathcal T}$. 

\smallskip
Additionally, in Section 3, we had also used the torsion theory $\tau=(\mathcal T,\mathcal F)$ to construct a torsion class $\overline{\mathcal T}^{sfg}$ in the category $FI^{sfg}_{\mathcal R}$ of shift finitely generated modules. As such, in this section, we will define a full subcategory $\widetilde{\mathcal T}\subseteq FI_{\mathcal R}$ such that $\widetilde{\mathcal T}\cap FI^{sfg}_{\mathcal R}=\overline{\mathcal T}^{sfg}$. For this, we define:
\begin{equation}\label{eq5.1nw}
Ob(\widetilde{\mathcal T}):=\{\mbox{$\mathscr V\in Ob(FI_{\mathcal R})$ $\vert$ Every finitely generated $\mathscr W\subseteq \mathscr V$ lies in $\overline{\mathcal T}$
 }\}
\end{equation}
The subcategory $\hat{\mathcal T}$ considered in Section 4 was a Serre subcategory. Hence, we would expect that its counterpart $\widetilde{\mathcal T}$ defined in \eqref{eq5.1nw}  is also  a Serre subcategory. We will now show that $\widetilde{\mathcal T}$ satisfies an even stronger property,
i.e., it is a hereditary torsion class. 

\smallskip
\begin{lem}\label{L5.1nw} The subcategory $\widetilde{\mathcal T}$ is closed under extensions. In other words, suppose that we have a short exact sequence
\begin{equation*}
0\longrightarrow \mathscr V'\longrightarrow \mathscr V\longrightarrow \mathscr V''\longrightarrow 0
\end{equation*} in $FI_{\mathcal R}$ with $\mathscr V'$, $\mathscr V''\in \widetilde{\mathcal T}$. Then, $\mathscr V\in \widetilde{\mathcal T}$. 
\end{lem}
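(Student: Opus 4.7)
The plan is to take an arbitrary finitely generated $\mathscr W \subseteq \mathscr V$ and exhibit it as an extension of two finitely generated objects that already lie in $\overline{\mathcal T}$, then invoke the fact from Proposition \ref{xxP3.2} that $\overline{\mathcal T}$ is closed under extensions.

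First I would form the intersection $\mathscr W \cap \mathscr V' \subseteq \mathscr W$ and the quotient $\mathscr W/(\mathscr W \cap \mathscr V')$, which by the Noether isomorphism embeds into $\mathscr V/\mathscr V' \cong \mathscr V''$. Since $Mod-\mathcal R$ is locally noetherian, so is $FI_{\mathcal R}$ by Theorem \ref{Th2.8}, and thus the finitely generated object $\mathscr W$ is noetherian. Consequently, both $\mathscr W \cap \mathscr V'$ (as a subobject of $\mathscr W$) and $\mathscr W/(\mathscr W \cap \mathscr V')$ (as a quotient of $\mathscr W$) are finitely generated.

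Now $\mathscr W \cap \mathscr V' \subseteq \mathscr V'$ is a finitely generated subobject, and since $\mathscr V' \in \widetilde{\mathcal T}$ by hypothesis, the defining condition \eqref{eq5.1nw} gives $\mathscr W \cap \mathscr V' \in \overline{\mathcal T}$. Similarly, $\mathscr W/(\mathscr W \cap \mathscr V')$ is a finitely generated subobject of $\mathscr V''$, and since $\mathscr V'' \in \widetilde{\mathcal T}$, the same condition yields $\mathscr W/(\mathscr W \cap \mathscr V') \in \overline{\mathcal T}$.

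Finally, from the short exact sequence
\begin{equation*}
0 \longrightarrow \mathscr W \cap \mathscr V' \longrightarrow \mathscr W \longrightarrow \mathscr W/(\mathscr W \cap \mathscr V') \longrightarrow 0
\end{equation*}
and the fact, established in Proposition \ref{xxP3.2}, that $\overline{\mathcal T}$ is closed under extensions in $FI^{fg}_{\mathcal R}$, I conclude $\mathscr W \in \overline{\mathcal T}$. Since $\mathscr W$ was an arbitrary finitely generated subobject of $\mathscr V$, this gives $\mathscr V \in \widetilde{\mathcal T}$. There is no real obstacle here; the argument is a direct bookkeeping exercise once one observes that local noetherianity ensures both $\mathscr W \cap \mathscr V'$ and $\mathscr W/(\mathscr W \cap \mathscr V')$ remain finitely generated, so that the extension-closure property of $\overline{\mathcal T}$ applies.
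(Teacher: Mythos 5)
Your proof is correct and takes essentially the same approach as the paper: both form the Noether-isomorphism decomposition of a finitely generated $\mathscr W\subseteq\mathscr V$ into $\mathscr W\cap\mathscr V'$ and $\mathscr W/(\mathscr W\cap\mathscr V')$, use local noetherianity to keep both pieces finitely generated, and then close up under extensions. The only cosmetic difference is that you invoke Proposition \ref{xxP3.2} for the extension-closure of $\overline{\mathcal T}$, whereas the paper re-runs the underlying degree-wise argument (choosing a common $N$ with $(\mathscr W\cap\mathscr V')_n,(\mathscr W/(\mathscr W\cap\mathscr V'))_n\in\mathcal T$ for $n\geq N$ and using that $\mathcal T$ is closed under extensions in $Mod-\mathcal R$) directly.
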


\begin{proof} We consider a finitely generated subobject $\mathscr W\subseteq \mathscr V$. This gives two short exact sequences fitting into the commutative diagram
\begin{equation}\label{5.2vo}
\begin{CD}
0 @>>> \mathscr W\cap \mathscr V' @>>> \mathscr W@>>> \mathscr W/(\mathscr W\cap \mathscr V')=(\mathscr W+\mathscr V')/\mathscr V'@>>> 0\\
@. @VVV @VVV @VVV @. \\
0 @>>> \mathscr V' @>>> \mathscr V @>>> \mathscr V'' =\mathscr V/\mathscr V'@>>> 0\\
\end{CD}
\end{equation} Since $\mathscr W$ is finitely generated, so is the subobject $\mathscr W\cap \mathscr V' $ and the quotient $\mathscr W/(\mathscr W\cap \mathscr V')$. The vertical
maps in \eqref{5.2vo} are all monomorphisms. Since $\mathscr V'$, $\mathscr V''\in \widetilde{\mathcal T}$, we can find $N$ large enough so that $(\mathscr W\cap \mathscr V' )_n$, $
(\mathscr W/(\mathscr W\cap \mathscr V'))_n\in \mathcal T$ for $n\geq N$. Then, $\mathscr W_n\in \mathcal T$ for all $n\geq N$.

\end{proof}

\begin{lem}\label{L5.2nw}  The subcategory $\widetilde{\mathcal T}$ contains all coproducts. 
\end{lem}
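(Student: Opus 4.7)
The plan is to reduce the arbitrary coproduct to a finite one by a compactness argument, and then invoke Lemma \ref{L5.1nw} inductively.

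Given a family $\{\mathscr V_i\}_{i\in I}$ in $\widetilde{\mathcal T}$, set $\mathscr V:=\bigoplus_{i\in I}\mathscr V_i$ and pick any finitely generated $\mathscr W\subseteq \mathscr V$. We must show $\mathscr W\in\overline{\mathcal T}$. First I would express $\mathscr V$ as the filtered colimit $\mathscr V=\varinjlim_{J\in Fin(I)}\mathscr V_J$, where $\mathscr V_J:=\bigoplus_{j\in J}\mathscr V_j$ and the transition maps (coming from inclusions $J\subseteq J'$ of finite subsets) are the canonical split monomorphisms. Since $\mathscr W$ is finitely generated, the functor $\operatorname{Hom}(\mathscr W,\_\_)$ preserves filtered colimits of monomorphisms (this is the working definition of finitely generated recalled at the end of Section 2). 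Hence the inclusion $\mathscr W\hookrightarrow\mathscr V$ factors as $\mathscr W\to \mathscr V_J\to \mathscr V$ for some finite $J\subseteq I$. Because the composite is a monomorphism and $\mathscr V_J\to\mathscr V$ is a monomorphism, the factor $\mathscr W\to \mathscr V_J$ is itself a monomorphism.

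Next I would show that the finite coproduct $\mathscr V_J$ lies in $\widetilde{\mathcal T}$ by induction on $|J|$. The case $|J|=1$ is immediate. For the inductive step, one uses the split short exact sequence
\begin{equation*}
0\longrightarrow \mathscr V_{J\setminus\{j_0\}}\longrightarrow \mathscr V_J\longrightarrow \mathscr V_{j_0}\longrightarrow 0
\end{equation*}
and applies Lemma \ref{L5.1nw} (closure under extensions). Therefore $\mathscr V_J\in\widetilde{\mathcal T}$, and since $\mathscr W\subseteq\mathscr V_J$ is finitely generated, the defining property of $\widetilde{\mathcal T}$ in \eqref{eq5.1nw} forces $\mathscr W\in\overline{\mathcal T}$.

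The only non-routine step is the initial compactness reduction, namely showing that a finitely generated subobject of $\bigoplus_{i\in I}\mathscr V_i$ already sits inside a finite partial sum. Once this is granted, Lemma \ref{L5.1nw} finishes the argument immediately. Since $FI_{\mathcal R}$ is a locally noetherian Grothendieck category (by Theorem \ref{Th2.8}) and $\mathscr W$ is finitely generated, hence finitely presented, this compactness is available and the proof goes through.
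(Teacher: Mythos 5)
Your proof is correct and follows essentially the same route as the paper: both express the coproduct as a filtered colimit of finite partial sums, use finite generation to land $\mathscr W$ inside a finite subsum, and then invoke Lemma~\ref{L5.1nw} (closure under extensions) to conclude. The only difference is that you spell out the monomorphism factorization and the induction on $|J|$ a bit more explicitly than the paper does.
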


\begin{proof}
Let $\{\mathscr V_i\}_{i\in I}$ be a family of objects in $\widetilde{\mathcal T}$. Using Lemma \ref{L5.1nw}, we know that every finite direct sum of objects
from $\{\mathscr V_i\}_{i\in I}$ lies in $\widetilde{\mathcal T}$. We note that $\underset{i\in I}{\bigoplus}\textrm{ }\mathscr V_i$ is equal to the filtered colimit of $
\underset{j\in J}{\bigoplus}\textrm{ }\mathscr V_j$ taken over all finite subsets $J\subseteq I$. Then, if $\mathscr W\subseteq \underset{i\in I}{\bigoplus}\textrm{ }\mathscr V_i$
 is a finitely generated object, we can find some finite subset $J\subseteq I$ such that $\mathscr W\subseteq \underset{j\in J}{\bigoplus}\textrm{ }\mathscr V_j$. It follows that
 $\mathscr W_n\in \mathcal T$ for $n\gg 0$. This proves the result. 
\end{proof}

\begin{lem}\label{L5.3nw} The subcategory $\widetilde{\mathcal T}$  is closed under quotients.

\end{lem}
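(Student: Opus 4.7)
The plan is to take a finitely generated subobject $\mathscr W'' \subseteq \mathscr V''$ of the quotient and produce a finitely generated $\mathscr W \subseteq \mathscr V$ that surjects onto it; since $\overline{\mathcal T}$ is closed under quotients (by Proposition \ref{xxP3.2}), this will give $\mathscr W'' \in \overline{\mathcal T}$ and hence $\mathscr V'' \in \widetilde{\mathcal T}$.

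More precisely, let $\pi:\mathscr V\longrightarrow\mathscr V''$ be an epimorphism with $\mathscr V\in\widetilde{\mathcal T}$, and fix a finitely generated $\mathscr W''\subseteq \mathscr V''$. First I would pass to the pullback $\mathscr U:=\pi^{-1}(\mathscr W'')\subseteq\mathscr V$, so that $\pi$ restricts to an epimorphism $\pi|_{\mathscr U}:\mathscr U\twoheadrightarrow\mathscr W''$. Since $FI_{\mathcal R}$ is locally finitely generated, I can write $\mathscr U=\underset{\mathscr W\in fg(\mathscr U)}{\varinjlim}\mathscr W$, and the images form a filtered system of subobjects of $\mathscr W''$ with $\underset{\mathscr W\in fg(\mathscr U)}{\bigcup}\pi(\mathscr W)=\mathscr W''$. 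Because $\mathscr W''$ is finitely generated, the identity on $\mathscr W''$ factors through one of the $\pi(\mathscr W)$, so $\pi(\mathscr W)=\mathscr W''$ for some finitely generated $\mathscr W\subseteq\mathscr U\subseteq\mathscr V$.

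Now $\mathscr W$ is a finitely generated subobject of $\mathscr V\in \widetilde{\mathcal T}$, so by definition $\mathscr W\in\overline{\mathcal T}$. Since $\overline{\mathcal T}$ is closed under quotients (Proposition \ref{xxP3.2}) and $\mathscr W''$ is a quotient of $\mathscr W$ via $\pi|_{\mathscr W}$, we conclude $\mathscr W''\in\overline{\mathcal T}$. As $\mathscr W''$ was an arbitrary finitely generated subobject of $\mathscr V''$, this shows $\mathscr V''\in\widetilde{\mathcal T}$.

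I do not anticipate a serious obstacle. The one point to be careful about is the lifting step: I need to invoke the fact that in a locally finitely generated Grothendieck category, an epimorphism onto a finitely generated object can be realized as the image of a finitely generated subobject of the source. This is a standard consequence of the definition of ``finitely generated'' (the hom functor preserves filtered colimits of monomorphisms), but it is the only non-tautological ingredient beyond Proposition \ref{xxP3.2}.
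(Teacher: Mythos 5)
Your proposal is correct and matches the paper's argument: both pass to the preimage $\pi^{-1}(\mathscr W'')$, use that its finitely generated subobjects form a filtered system whose images exhaust $\mathscr W''$, and then pick one that already surjects onto $\mathscr W''$ because $\mathscr W''$ is finitely generated. The only cosmetic difference is that the paper finishes by noting the levelwise condition $\mathscr W''_n\in\mathcal T$ directly, whereas you cite Proposition~\ref{xxP3.2} for quotient-closure of $\overline{\mathcal T}$; both are fine.
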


\begin{proof}
We consider an epimorphism $f:\mathscr V\longrightarrow \mathscr W$ in $FI_{\mathcal R}$ with $\mathscr V\in \widetilde{\mathcal T}$. We consider a finitely generated
subobject $\mathscr W'\subseteq \mathscr W$.  The finitely generated subobjects of $f^{-1}(
\mathscr W')\subseteq \mathscr V$ form a filtered system and hence their images in $\mathscr W'$ form a filtered system of subobjects whose union is $\mathscr W'$. As such, we can
find some finitely generated subobject $\mathscr V'\subseteq f^{-1}(\mathscr W')\subseteq \mathscr V$ such that $f|\mathscr V':\mathscr V'\longrightarrow \mathscr W'$ is an epimorphism. Since $\mathscr V\in \widetilde{\mathcal T}$, we know that $\mathscr V'_n\in \mathcal T$ for $n\gg 0$. Then, $\mathscr W'_n\in \mathcal T$ for $n\gg 0$. This proves the result.
\end{proof}

\begin{thm}\label{P5.4nw}  Let  $Mod-\mathcal R$ be a locally noetherian category and $\tau=(\mathcal T,\mathcal F)$ a hereditary torsion theory on $Mod-\mathcal R$. Then, the full
subcategory $\widetilde{\mathcal T}\subseteq FI_{\mathcal R}$ defined by setting
\begin{equation}
Ob(\widetilde{\mathcal T}):=\{\mbox{$\mathscr V\in Ob( FI_{\mathcal R})$ $\vert$ Every f.g. $\mathscr W\subseteq \mathscr V$ satisfies $\mathscr W_n\in \mathcal T$
for $n\gg 0$ }\}
\end{equation}
is a hereditary torsion class.
\end{thm}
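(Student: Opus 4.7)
The plan is to observe that the three preceding lemmas already do almost all the work, and that only one (trivial) additional closure property needs to be checked before invoking the standard characterization of hereditary torsion classes in a Grothendieck category.

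First I would verify that $\widetilde{\mathcal T}$ is closed under subobjects: if $\mathscr V \in \widetilde{\mathcal T}$ and $\mathscr V' \hookrightarrow \mathscr V$, then any finitely generated $\mathscr W \subseteq \mathscr V'$ is simultaneously a finitely generated subobject of $\mathscr V$, hence lies in $\overline{\mathcal T}$ by hypothesis; so $\mathscr V' \in \widetilde{\mathcal T}$. This is immediate from the definition \eqref{eq5.1nw} and requires no additional argument.

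Next I would assemble the pieces. Lemma \ref{L5.1nw} gives closure under extensions, Lemma \ref{L5.2nw} gives closure under arbitrary coproducts, and Lemma \ref{L5.3nw} gives closure under quotients. Together with the closure under subobjects just verified, and with the fact that $FI_{\mathcal R}$ is a Grothendieck (in particular, cocomplete abelian) category, the standard criterion (see, e.g., \cite[$\S$ I.1]{BeRe}) applies: a full, replete subcategory of a cocomplete abelian category that is closed under extensions, quotients, and arbitrary coproducts is a torsion class, and it is hereditary precisely when it is also closed under subobjects. Applying this criterion yields the result.

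There is essentially no obstacle here, since the three lemmas preceding the statement were designed precisely to supply the nontrivial closure properties; the hereditary condition falls out of the form of the definition itself. The only subtle point one must be careful about is that $\widetilde{\mathcal T}$ is defined via a condition on finitely generated subobjects, which interacts cleanly with subobjects and coproducts only because $FI_{\mathcal R}$ is locally finitely generated (indeed locally noetherian, by Theorem \ref{Th2.8}); this fact has already been used implicitly inside the three lemmas, so no further invocation is needed in the final assembly.
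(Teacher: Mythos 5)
Your proposal is correct and follows essentially the same approach as the paper: you verify closure under subobjects directly from the definition, cite Lemmas \ref{L5.1nw}, \ref{L5.2nw}, and \ref{L5.3nw} for closure under extensions, coproducts, and quotients respectively, and then invoke the standard criterion from \cite[$\S$ I.1]{BeRe} in the Grothendieck category $FI_{\mathcal R}$. Your additional remark about local finite generation being implicit is accurate but not needed beyond what the cited lemmas already supply.
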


\begin{proof}
From the definition, it is clear that $\widetilde{\mathcal T}$ is closed under subobjects. From Lemma \ref{L5.1nw}, Lemma \ref{L5.2nw} and Lemma \ref{L5.3nw} we know respectively that
$\widetilde{\mathcal T}$ is closed under extensions, coproducts and quotients. Since $FI_{\mathcal R}$ is a Grothendieck category, it now follows from 
\cite[I.1]{BeRe} that $\widetilde{\mathcal T}$ is a hereditary torsion class.
\end{proof}

Now that we know $\widetilde{\mathcal T}$ is a hereditary torsion class, our aim is to describe closed objects with respect to $\widetilde{\mathcal T}$ as well as a functor
from $FI_{\mathcal R}$ to $Cl(\widetilde{\mathcal T})$. This will be done in several steps. 

\smallskip
For any $\mathscr V\in FI_{\mathcal R}$ and any $n\geq 0$, we now denote by $fg^\tau_n(\mathscr V)$ the collection of all finitely generated subobjects $\mathscr W'
\subseteq \mathscr V$ such that $\mathscr W'_m\in \mathcal T$ for every $m\geq n$. Clearly, $fg^\tau_n(\mathscr V)\subseteq \hat{\mathcal T}^n$. 

\begin{thm}\label{P5.5}  Let  $Mod-\mathcal R$ be a locally noetherian category and $\tau=(\mathcal T,\mathcal F)$ a hereditary torsion theory on $Mod-\mathcal R$. Every
$\mathscr V\in \widetilde{\mathcal T}$ is equipped with an increasing filtration 
\begin{equation}\label{5.4ner}
F^0\mathscr V\subseteq F^1\mathscr V \subseteq .... \subseteq \mathscr V
\end{equation}
with $F^n\mathscr V\in \hat{\mathcal T}^n$ for each $n\geq 0$. 

\end{thm}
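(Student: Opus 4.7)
The natural candidate for the filtration is
\begin{equation*}
F^n\mathscr V := \sum_{\mathscr W \in fg^\tau_n(\mathscr V)} \mathscr W,
\end{equation*}
that is, the sum inside $\mathscr V$ of all finitely generated subobjects $\mathscr W \subseteq \mathscr V$ such that $\mathscr W_m \in \mathcal T$ for every $m \geq n$. The plan is to verify the three bullet points: that this assignment is functorial/well-defined, that it is increasing in $n$, and that each $F^n \mathscr V$ lies in $\hat{\mathcal T}^n$. As a bonus (which the statement suggests by writing ``$\subseteq \mathscr V$'') I would also observe that $\mathscr V = \bigcup_n F^n\mathscr V$.

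First, the inclusion $F^n\mathscr V \subseteq F^{n+1}\mathscr V$ is immediate from $fg^\tau_n(\mathscr V) \subseteq fg^\tau_{n+1}(\mathscr V)$: if $\mathscr W_m \in \mathcal T$ for every $m \geq n$, then certainly for every $m \geq n+1$. Second, to show $F^n \mathscr V \in \hat{\mathcal T}^n$, I would use the observation recorded in Section 4 (just after \eqref{4.3ne}) that each $\hat{\mathcal T}^n$ is a hereditary torsion class in $FI_{\mathcal R}$, hence closed under arbitrary coproducts and quotients. The sum $F^n\mathscr V$ is the image of the canonical map
\begin{equation*}
\bigoplus_{\mathscr W \in fg^\tau_n(\mathscr V)} \mathscr W \longrightarrow \mathscr V,
\end{equation*}
and since each summand lies in $\hat{\mathcal T}^n$, so does the direct sum; its quotient $F^n\mathscr V$ therefore lies in $\hat{\mathcal T}^n$ as well.

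Finally, to see that this filtration exhausts $\mathscr V$, note that since $Mod-\mathcal R$ is locally noetherian, so is $FI_{\mathcal R}$ by Theorem \ref{Th2.8}, and hence $\mathscr V$ is the filtered union of its finitely generated subobjects. For any finitely generated $\mathscr W \subseteq \mathscr V$, the assumption $\mathscr V \in \widetilde{\mathcal T}$ forces $\mathscr W \in \overline{\mathcal T}$; that is, there exists some $N=N(\mathscr W)$ with $\mathscr W_m \in \mathcal T$ for all $m \geq N$. Hence $\mathscr W \in fg^\tau_N(\mathscr V)$, so $\mathscr W \subseteq F^N\mathscr V \subseteq \bigcup_{n\geq 0} F^n\mathscr V$, proving $\mathscr V = \bigcup_n F^n \mathscr V$.

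There is essentially no serious obstacle; the content is packaged in the right choice of the $F^n$. The only point where one has to be slightly careful is the passage from ``each $\mathscr W$ lies in $\hat{\mathcal T}^n$'' to ``their sum lies in $\hat{\mathcal T}^n$'', which requires that $\hat{\mathcal T}^n$ (unlike $\hat{\mathcal T}$ itself) really is closed under arbitrary coproducts — this is precisely the feature of the stratification by a fixed threshold $n$, as opposed to the ``eventually'' condition defining $\hat{\mathcal T}$.
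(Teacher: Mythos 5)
Your proof is correct and takes essentially the same approach as the paper: you define $F^n\mathscr V$ exactly as the paper does (the sum of all $\mathscr W\in fg^\tau_n(\mathscr V)$), and the exhaustion argument is identical. The only cosmetic difference is in verifying $F^n\mathscr V\in\hat{\mathcal T}^n$: you invoke directly that $\hat{\mathcal T}^n$ is closed under coproducts and quotients (a fact the paper records right after \eqref{4.3ne}), while the paper reaches the same conclusion by writing $F^n\mathscr V$ as a filtered colimit of finite sums and using that $\mathcal T$ is closed under colimits — the two verifications are interchangeable.
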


\begin{proof} We let $F^n\mathscr V$ be the sum of all $\mathscr W\in fg^\tau_n(\mathscr V)$. Since $\mathscr V\in \widetilde{\mathcal T}$, every finitely generated subobject
$\mathscr W\subseteq \mathscr V$ lies in $fg_n^\tau(\mathscr V)$ for some $n\geq 0$. Since $\mathscr V$ is the sum of its finitely generated subobjects, it follows that we have a filtration as in \eqref{5.4ner} whose union is $\mathscr V$. 

\smallskip
It remains to show that $F^n\mathscr V\in \hat{\mathcal T}^n$ for each $n\geq 0$.  We note that any finite sum $\underset{j\in J}{\sum}\mathscr W_j$ of objects in $ fg_n^\tau
(\mathscr V)$ is a quotient of the direct sum $\underset{j\in J}{\bigoplus}\mathscr W_j$ and hence lies in $fg^\tau_n(\mathscr V)$. We also observe that $F^n\mathscr V$ is the 
colimit of $\underset{j\in J}{\sum}\mathscr W_j$ as $J$ varies over all finite  collections of objects in $fg^\tau_n(\mathscr V)$.  Since $\mathcal T$ is closed under colimits (being a torsion class), the result follows.

\end{proof}

\begin{lem}\label{L5.6f} Let $u:\mathscr A\longrightarrow \mathscr B$ be an epimorphism in $FI_{\mathcal R}$ with $Ker(u)=\mathscr K\in \widetilde{\mathcal T}$. Let $\mathscr L
\in Cl(\hat{\mathcal T})$. Then, the induced morphism
$FI_{\mathcal R}(u,\mathscr L):FI_{\mathcal R}(\mathscr B,\mathscr L)\longrightarrow FI_{\mathcal R}(\mathscr A,\mathscr L)$ is an isomorphism.

\end{lem}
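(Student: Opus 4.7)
The plan is to establish both injectivity and surjectivity of $FI_{\mathcal R}(u,\mathscr L)$. Injectivity is immediate: if $g_1, g_2:\mathscr B \longrightarrow \mathscr L$ satisfy $g_1 u = g_2 u$, then since $u$ is an epimorphism we have $g_1 = g_2$. For surjectivity I must show that every $f:\mathscr A \longrightarrow \mathscr L$ satisfies $f\circ (\mathscr K \hookrightarrow \mathscr A) = 0$, so that $f$ descends along $u$ to a morphism $\mathscr B = \mathscr A/\mathscr K \longrightarrow \mathscr L$ (uniqueness of the descent is automatic from $u$ being epi).

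The main tool is Proposition \ref{P5.5}: since $\mathscr K \in \widetilde{\mathcal T}$, it admits an increasing filtration $F^0\mathscr K \subseteq F^1\mathscr K \subseteq \cdots$ with $F^n\mathscr K \in \hat{\mathcal T}^n \subseteq \hat{\mathcal T}$ whose union equals $\mathscr K$. It remains to show that $f$ kills every $F^n\mathscr K$. For this I will use the general observation that a $\hat{\mathcal T}$-closed object $\mathscr L$ annihilates every object of $\hat{\mathcal T}$: given $X \in \hat{\mathcal T}$, the zero morphism $0 \longrightarrow X$ has vanishing kernel and cokernel $X \in \hat{\mathcal T}$, hence is a $\hat{\mathcal T}$-isomorphism, so by Definition \ref{D4.1} the induced map $FI_{\mathcal R}(X,\mathscr L) \longrightarrow FI_{\mathcal R}(0,\mathscr L) = 0$ is bijective, forcing $FI_{\mathcal R}(X,\mathscr L) = 0$. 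Applying this with $X = F^n\mathscr K$ shows $f|_{F^n\mathscr K} = 0$ for every $n$.

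Finally, since $\mathscr K$ is the filtered colimit (union) of the subobjects $\{F^n\mathscr K\}_{n\geq 0}$, and $f$ restricted to each $F^n\mathscr K$ vanishes, the universal property of the colimit in the Grothendieck category $FI_{\mathcal R}$ forces $f|_{\mathscr K} = 0$. This yields the required factorization and completes the proof. I do not anticipate a serious obstacle: the only point requiring mild care is to note that the ``union'' in Proposition \ref{P5.5} is a genuine filtered colimit in $FI_{\mathcal R}$, so that pointwise vanishing on each piece assembles to vanishing on $\mathscr K$; everything else is formal manipulation of the hypotheses.
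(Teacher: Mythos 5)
Your proof is correct and takes essentially the same approach as the paper: both rest on the filtration from Proposition \ref{P5.5} and on the fact that a $\hat{\mathcal T}$-closed object kills the pieces $F^n\mathscr K\in\hat{\mathcal T}^n\subseteq\hat{\mathcal T}$. The paper packages this as an inverse limit of Hom-groups over the quotients $\mathscr A/\mathscr K^i$ (using that each quotient map $\mathscr A\to\mathscr A/\mathscr K^i$ is a $\hat{\mathcal T}^i$-isomorphism), whereas you split into injectivity (from $u$ epic) plus surjectivity (from $f$ vanishing on $\mathscr K=\bigcup F^n\mathscr K$); the two are equivalent reorganizations of the same argument.
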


\begin{proof} Using Proposition \ref{P5.5},we can consider a filtration $\mathscr K^0\subseteq \mathscr K^1\subseteq ....$ on $\mathscr K\in \widetilde{\mathcal T}$ with each $\mathscr K^i\in \hat{\mathcal T}^i$. Since $\hat{\mathcal T}^i\subseteq \hat{\mathcal T}$, we know that $\mathscr L
\in Cl(\hat{\mathcal T})$ lies in $Cl(\hat{\mathcal T}^i)$ for each $i$. It follows therefore that we have an isomorphism $FI_{\mathcal R}(\mathscr A/
\mathscr K^i,\mathscr L)\longrightarrow FI_{\mathcal R}(\mathscr A,\mathscr L)$ for each $i$. We know that $\mathscr B=\mathscr A/\mathscr K$. Taking limits, we therefore obtain an isomorphism
\begin{equation*}
FI_{\mathcal R}(u,\mathscr L):FI_{\mathcal R}(\mathscr B,\mathscr L)=FI_{\mathcal R}(\underset{i\geq 0}{\varinjlim}\textrm{ }\mathscr A/
\mathscr K^i,\mathscr L)=\underset{i\geq 0}{\varprojlim}\textrm{ }FI_{\mathcal R}(\mathscr A/
\mathscr K^i,\mathscr L)\overset{\cong}{\longrightarrow} FI_{\mathcal R}(\mathscr A,\mathscr L)
\end{equation*}

\end{proof}

\begin{lem}\label{L5.7f} Let $u:\mathscr A\longrightarrow \mathscr B$ be a  monomorphism in $FI_{\mathcal R}$ with $Coker(u)=\mathscr C\in \widetilde{\mathcal T}$. Let $\mathscr L
\in Cl(\hat{\mathcal T})$. Then, the induced morphism
$FI_{\mathcal R}(u,\mathscr L):FI_{\mathcal R}(\mathscr B,\mathscr L)\longrightarrow FI_{\mathcal R}(\mathscr A,\mathscr L)$ is an isomorphism.

\end{lem}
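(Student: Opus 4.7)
The plan is to dualize the proof of Lemma \ref{L5.6f}: instead of filtering a kernel and taking quotients, I will filter the cokernel $\mathscr C$ and lift the filtration to a filtration of $\mathscr B$ by subobjects containing $\mathscr A$. The key input is again Proposition \ref{P5.5}. Since $\mathscr C\in \widetilde{\mathcal T}$, it supplies an increasing filtration $\mathscr C^0\subseteq \mathscr C^1\subseteq \dots \subseteq \mathscr C$ with $\mathscr C^i\in \hat{\mathcal T}^i$ for every $i\geq 0$ and with $\mathscr C=\bigcup_{i\geq 0}\mathscr C^i$.

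Letting $p:\mathscr B\longrightarrow \mathscr C$ denote the quotient map, I define $\mathscr B^i:=p^{-1}(\mathscr C^i)$ for each $i\geq 0$. Then $\mathscr A\subseteq \mathscr B^i\subseteq \mathscr B$ and the short exact sequence $0\longrightarrow \mathscr A\longrightarrow \mathscr B^i\longrightarrow \mathscr C^i\longrightarrow 0$ holds. Since preimages commute with filtered unions of subobjects in the Grothendieck category $FI_{\mathcal R}$, we have $\mathscr B=\bigcup_{i\geq 0}\mathscr B^i=\varinjlim_{i\geq 0}\mathscr B^i$. For each $i$, the inclusion $u_i:\mathscr A\hookrightarrow \mathscr B^i$ has $Ker(u_i)=0$ and $Coker(u_i)=\mathscr C^i\in \hat{\mathcal T}^i$, so $u_i$ is a $\hat{\mathcal T}^i$-isomorphism. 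Because of the filtration \eqref{filtne} we have $\hat{\mathcal T}^i\subseteq \hat{\mathcal T}$, hence $\mathscr L\in Cl(\hat{\mathcal T})\subseteq Cl(\hat{\mathcal T}^i)$ and the induced morphism $FI_{\mathcal R}(\mathscr B^i,\mathscr L)\longrightarrow FI_{\mathcal R}(\mathscr A,\mathscr L)$ is an isomorphism for every $i$.

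To finish, I pass to the limit: applying $FI_{\mathcal R}(\_\_,\mathscr L)$ to $\mathscr B=\varinjlim_{i\geq 0}\mathscr B^i$ turns the colimit into a projective limit whose transition morphisms are (by the previous step) all isomorphisms, yielding
\begin{equation*}
FI_{\mathcal R}(u,\mathscr L):FI_{\mathcal R}(\mathscr B,\mathscr L)=\varprojlim_{i\geq 0}FI_{\mathcal R}(\mathscr B^i,\mathscr L)\overset{\cong}{\longrightarrow} FI_{\mathcal R}(\mathscr A,\mathscr L),
\end{equation*}
as required. The only mildly delicate point is the identification $\mathscr B=\varinjlim_i \mathscr B^i$, but this follows from exactness of filtered colimits in $FI_{\mathcal R}$ applied to the compatible system of short exact sequences $0\to \mathscr A\to \mathscr B^i\to \mathscr C^i\to 0$ whose colimit is $0\to \mathscr A\to \mathscr B\to \mathscr C\to 0$; once this is in place the argument is a formal mirror of Lemma \ref{L5.6f} and presents no real obstacle.
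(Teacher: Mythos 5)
Your proposal matches the paper's proof essentially line for line: filter $\mathscr C$ via Proposition \ref{P5.5}, lift to a filtration $\mathscr B^i=p^{-1}(\mathscr C^i)$ of $\mathscr B$ by subobjects containing $\mathscr A$, use $\mathscr L\in Cl(\hat{\mathcal T}^i)$ to get isomorphisms at each stage, and pass to the projective limit. Your extra remarks about why $\mathscr B=\varinjlim_i\mathscr B^i$ are a welcome bit of explicitness that the paper leaves implicit, but the argument is the same.
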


\begin{proof} Using Proposition \ref{P5.5},we can consider a filtration $\mathscr C^0\subseteq \mathscr C^1\subseteq ....$ on $\mathscr C\in \widetilde{\mathcal T}$ with each $\mathscr C^i\in \hat{\mathcal T}^i$. This corresponds to a filtration $\mathscr B^0\subseteq \mathscr B^1\subseteq ....$ on $\mathscr B$ such that $\mathscr B^i/\mathscr A=\mathscr C^i$. 

\smallskip Since $\hat{\mathcal T}^i\subseteq \hat{\mathcal T}$, we know that $\mathscr L
\in Cl(\hat{\mathcal T})$ lies in $Cl(\hat{\mathcal T}^i)$ for each $i$. It follows therefore that we have an isomorphism $FI_{\mathcal R}(\mathscr B^i
,\mathscr L)\longrightarrow FI_{\mathcal R}(\mathscr A,\mathscr L)$ for each $i$. Taking limits, we therefore obtain an isomorphism
\begin{equation*}
FI_{\mathcal R}(u,\mathscr L):FI_{\mathcal R}(\mathscr B,\mathscr L)=FI_{\mathcal R}(\underset{i\geq 0}{\varinjlim}\textrm{ }
\mathscr B^i,\mathscr L)=\underset{i\geq 0}{\varprojlim}\textrm{ }FI_{\mathcal R}(
\mathscr B^i,\mathscr L)\overset{\cong}{\longrightarrow} FI_{\mathcal R}(\mathscr A,\mathscr L)
\end{equation*}

\end{proof} 

\begin{Thm}\label{Tmh5.8} Let  $Mod-\mathcal R$ be a locally noetherian category and $\tau=(\mathcal T,\mathcal F)$ a hereditary torsion theory on $Mod-\mathcal R$. Then,
$Cl(\hat{\mathcal T})=Cl(\widetilde{\mathcal T})$. In particular, we have a functor $\mathbb L_\tau:FI_{\mathcal R}\longrightarrow Cl(\hat{\mathcal T})=Cl(\widetilde{\mathcal T})$. 

\end{Thm}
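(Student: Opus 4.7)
The plan is to prove the two inclusions $Cl(\widetilde{\mathcal T})\subseteq Cl(\hat{\mathcal T})$ and $Cl(\hat{\mathcal T})\subseteq Cl(\widetilde{\mathcal T})$ separately, and then invoke Proposition \ref{P4.11} to conclude the functorial statement.

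For the first inclusion, I would start by checking that $\hat{\mathcal T}\subseteq \widetilde{\mathcal T}$. Indeed, if $\mathscr V\in \hat{\mathcal T}$, then $\mathscr V_n\in\mathcal T$ for $n\gg 0$; any finitely generated subobject $\mathscr W\subseteq \mathscr V$ satisfies $\mathscr W_n\subseteq \mathscr V_n$, and since $\tau$ is hereditary, $\mathscr W_n\in \mathcal T$ for $n\gg 0$, so $\mathscr W\in \overline{\mathcal T}$ and thus $\mathscr V\in \widetilde{\mathcal T}$. Consequently every $\hat{\mathcal T}$-isomorphism is a $\widetilde{\mathcal T}$-isomorphism, so any $\widetilde{\mathcal T}$-closed object is automatically $\hat{\mathcal T}$-closed.

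The reverse inclusion is the substantive part. Given $\mathscr L\in Cl(\hat{\mathcal T})$ and a $\widetilde{\mathcal T}$-isomorphism $u:\mathscr A\longrightarrow \mathscr B$, I need $FI_{\mathcal R}(u,\mathscr L)$ to be an isomorphism. The plan is to mimic the argument of Proposition \ref{P4.4}: factor $u$ through its image as $\mathscr A\overset{f}{\twoheadrightarrow}\mathscr C\overset{g}{\hookrightarrow}\mathscr B$, so that $Ker(f)=Ker(u)\in \widetilde{\mathcal T}$, $Coker(f)=0$, $Ker(g)=0$ and $Coker(g)=Coker(u)\in \widetilde{\mathcal T}$. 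Then Lemma \ref{L5.6f} applied to $f$ and Lemma \ref{L5.7f} applied to $g$ give that both $FI_{\mathcal R}(f,\mathscr L)$ and $FI_{\mathcal R}(g,\mathscr L)$ are isomorphisms, so their composite $FI_{\mathcal R}(u,\mathscr L)$ is as well. This yields $\mathscr L\in Cl(\widetilde{\mathcal T})$.

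The main obstacle has already been handled in Lemmas \ref{L5.6f} and \ref{L5.7f}, whose proofs use the filtration $F^i\mathscr V$ from Proposition \ref{P5.5} to reduce the question for $\widetilde{\mathcal T}$-kernels/cokernels to the layers $\hat{\mathcal T}^i\subseteq \hat{\mathcal T}$. Once $Cl(\hat{\mathcal T})=Cl(\widetilde{\mathcal T})$ is established, the functor $\mathbb L_\tau:FI_{\mathcal R}\longrightarrow Cl(\hat{\mathcal T})$ constructed in Proposition \ref{P4.11} may be regarded as taking values in $Cl(\widetilde{\mathcal T})$, which is the final assertion.
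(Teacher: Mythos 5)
Your proposal is correct and follows essentially the same path as the paper's proof: establish $\hat{\mathcal T}\subseteq\widetilde{\mathcal T}$ (hence $Cl(\widetilde{\mathcal T})\subseteq Cl(\hat{\mathcal T})$), then factor a $\widetilde{\mathcal T}$-isomorphism $u$ through its image into an epimorphism and a monomorphism and invoke Lemmas \ref{L5.6f} and \ref{L5.7f}, finishing with Proposition \ref{P4.11} for the functorial statement. The only thing you add beyond the paper's wording is the short heredity argument justifying $\hat{\mathcal T}\subseteq\widetilde{\mathcal T}$, which the paper leaves implicit.
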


\begin{proof}
From the definitions in \eqref{4.3ne} and \eqref{eq5.1nw}, we know that $\hat{\mathcal T}\subseteq \widetilde{\mathcal T}$, whence it follows that $Cl(\hat{\mathcal T})\supseteq Cl(\widetilde{\mathcal T})$. We now consider a morphism $u:\mathscr A\longrightarrow \mathscr B$ in $FI_{\mathcal R}$ which is $\widetilde{\mathcal T}$-closed. Then, $u$ may be expressed as the composition
\begin{equation}
u: \mathscr A\xrightarrow{u'}\mathscr A/Ker(u) \xrightarrow{u''} \mathscr B
\end{equation} where $u'$ is an epimorphism with $Ker(u')\in \widetilde{\mathcal T}$ and $u''$ is a monomorphism with $Coker(u'')\in \widetilde{\mathcal T}$.  Let $\mathscr L
\in Cl(\hat{\mathcal T})$. From Lemma \ref{L5.6f} and Lemma \ref{L5.7f}, it follows that $FI_{\mathcal R}(u',\mathscr L)$ and $FI_{\mathcal R}(u'',\mathscr L)$ are both
isomorphisms. Hence, $FI_{\mathcal R}(u,\mathscr L)$ is an isomorphism. Hence, $\mathscr L\in Cl(\widetilde{\mathcal T})$. This proves the result.
\end{proof}

\section{The functors $H_a$ and properties of finitely and shift finitely generated modules}

We return to the general case, i.e., $\mathcal R$ is a small preadditive category, but $Mod-\mathcal R$ is not necessarily noetherian. Fix  $a\geq 0$. Let $\mathscr V
\in FI_{\mathcal R}$. In a manner similar to 
\cite[$\S$ 2]{Four1}, we define the functor 
\begin{equation}\label{B-a}
\mathbb B^{-a}:FI_{\mathcal R}\longrightarrow FI_{\mathcal R}\qquad \mathbb B^{-a}(\mathscr V)(S):=\underset{\phi:[a]\hookrightarrow S}{\bigoplus}\textrm{ }\mathscr V(S,\phi)
=\underset{\phi:[a]\hookrightarrow S}{\bigoplus}\textrm{ }\mathscr V(S-\phi[a])
\end{equation} It is clear from \eqref{B-a} that $\mathbb B^{-a}(\mathscr V)\in FI_{\mathcal R}$. For each $a\geq 1$, we consider the set $\{s_i:[a-1]\longrightarrow [a]\}_{1\leq i\leq a}$ of standard  order-preserving injections, where the image of $s_i$ misses $i$. Then, for any $\phi:[a]\longrightarrow S$, we have $ S-\phi\{a\}\subsetneq S-\phi\circ s_i\{a-1\}$ which induces a morphism $d_i(S,\phi):\mathscr V( S-\phi\{a\})\longrightarrow \mathscr V( S-\phi\circ s_i\{a-1\})$. Taking the alternating sum $\sum_{i=1}^a (-1)^id_i$ of these maps in the usual manner, we obtain a complex
\begin{equation}\label{Bstar}
\mathbb B^{-\ast}(\mathscr V) : \qquad \dots \longrightarrow \mathbb B^{-a}(\mathscr V)\longrightarrow \mathbb B^{-(a-1)}(\mathscr V) \longrightarrow \dots 
\longrightarrow \mathbb B^{-1}(\mathscr V) \longrightarrow \mathbb B^0(\mathscr V)=\mathscr V\longrightarrow 0
\end{equation} Let $S_a$ be the permutation group on $a$ objects and consider the group ring $\mathbb Z[S_a]$. We consider the small preadditive category 
$\mathcal R[S_a]$ defined by setting $Ob(\mathcal R)=Ob(\mathcal R[S_a])$ and 
\begin{equation}\label{eq5.3}
\mathcal R[S_a](r,r'):=\mathcal R(r,r')\otimes_{\mathbb Z}\mathbb Z[S_a]
\end{equation} The composition in $\mathcal R[S_a]$ is the usual composition in $\mathcal R$ extended by the multiplication in $\mathbb Z[S_a]$. Given a morphism $f\cdot \sigma
\in \mathcal R[S_a](r,r')$, i.e.,  $f\in \mathcal R(r,r')$ and $\sigma\in S_a$, we notice that we have a map 
\begin{equation}\label{eq5.4}
\mathscr V(S,\phi)(r')
=\mathscr V(S-\phi[a])(r')\xrightarrow{\mathscr V(S-\phi[a])(f)} \mathscr V(S-\phi[a])(r)=\mathscr V(S-\phi\circ \sigma[a])(r)=\mathscr V(S,\phi\circ \sigma)(r)
\end{equation} for each $\phi:[a]\longrightarrow S$ in $FI$. Using the maps in \eqref{eq5.4}, it may be easily verified that
$\mathbb B^{-a}$ may be treated as a functor $\mathbb B^{-a}:FI_{\mathcal R}\longrightarrow FI_{\mathcal R[S_a]}$. On the other hand, the canonical $\mathcal R\text{-}\mathcal R$-bimodule given by morphism spaces in $\mathcal R$ may be extended to a left $\mathcal R[S_a]$ right $\mathcal R$-module:
\begin{equation}
\begin{array}{c}
H_{\mathcal R}^\varepsilon:\mathcal R^{op}\otimes \mathcal R[S_a]\longrightarrow Ab \qquad (r',r)\mapsto \mathcal R(r',r)\\
H_{\mathcal R}^\varepsilon (f_1,f_2\cdot \sigma):H_{\mathcal R}^\varepsilon(r',r)\longrightarrow H_{\mathcal R}^\varepsilon(r''',r'') \qquad f\mapsto (-1)^{sgn(\sigma)}f_2\circ f\circ f_1\\
\end{array}
\end{equation} Here $sgn(\sigma)$ is the sign of the permutation in $\mathbb Z_2$. This allows us to define a functor 
\begin{equation}\label{btilde-a}
\widetilde{\mathbb B}^{-a}:FI_{\mathcal R}\longrightarrow FI_{\mathcal R}\qquad \widetilde{\mathbb B}^{-a}(\mathscr V)(S):=\mathbb B^{-a}(\mathscr V)(S)\otimes_{\mathcal R[S_a]}H_{\mathcal R}^\varepsilon
\end{equation} We observe that $ \widetilde{\mathbb B}^{-a}(\mathscr V)(S)\in Mod-\mathcal R$ is a direct sum of all $\mathscr V(T)$ as $T$ varies over all the distinct subsets of
$S$ such that $|T|=|S|-a$.  It may be verified by direct computation that the complex in \eqref{Bstar} descends to a complex
\begin{equation}\label{Btildestar}
\widetilde{\mathbb B}^{-\ast}(\mathscr V) : \qquad \dots \longrightarrow \widetilde{\mathbb B}^{-a}(\mathscr V)\longrightarrow \widetilde{\mathbb B}^{-(a-1)}(\mathscr V) \longrightarrow \dots 
\longrightarrow \widetilde{\mathbb B}^{-1}(\mathscr V) \longrightarrow \widetilde{\mathbb B}^0(\mathscr V)=\mathscr V\longrightarrow 0
\end{equation} For $\mathscr V\in FI_{\mathcal R}$, we set $H_{a}(\mathscr V):=H^{-a}(\widetilde{\mathbb B}^{-\ast}(\mathscr V) )\in FI_{\mathcal R}$. In particular, it is easy to observe that $\mathbb B^{-a}(_{d}\mathscr M_r)={_{a+d}\mathscr M_r}$. Since $\mathbb B^{-a}$ is exact, it follows that for any $\mathscr V\in FI^{fg}_{\mathcal R}$, 
the object $\mathbb B^{-a}(\mathscr V)$ is finitely generated. Further, since $\widetilde{\mathbb B}^{-a}(\mathscr V)$ is a quotient of $\mathbb B^{-a}(\mathscr V)$, 
it follows that $\widetilde{\mathbb B}^{-a}(\mathscr V)\in FI_{\mathcal R}^{fg}$.

\begin{thm}\label{P5.2} Let $\mathscr V\in FI_{\mathcal R}$. Fix a finite set $S$ and consider the colimit $\underset{T\subsetneq S}{colim}\textrm{ }\mathscr V(T)$ taken over
all proper subsets of $S$ ordered by inclusion. Then, we have
\begin{equation}\label{eq5.9}
H^{0}(\widetilde{\mathbb B}^{-\ast}(\mathscr V) )(S)= Coker\left(\underset{T\subsetneq S}{colim}\textrm{ }\mathscr V(T)\longrightarrow \mathscr V(S)\right)=H_0(\mathscr V)(S)
\end{equation}
\begin{equation}\label{eq5.10}
H^{-1}(\widetilde{\mathbb B}^{-\ast}(\mathscr V) )(S)= Ker\left(\underset{T\subsetneq S}{colim}\textrm{ }\mathscr V(T)\longrightarrow \mathscr V(S)\right)=H_1(\mathscr V)(S)
\end{equation}

\end{thm}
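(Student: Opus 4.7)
The plan is to deduce both identities in Proposition~\ref{P5.2} from a single intermediate fact: under a natural identification, the cokernel of
$$d^{-2}:\widetilde{\mathbb B}^{-2}(\mathscr V)(S)\longrightarrow\widetilde{\mathbb B}^{-1}(\mathscr V)(S)$$
is isomorphic to $\underset{T\subsetneq S}{colim}\ \mathscr V(T)$ in a way compatible with the induced map to $\mathscr V(S)$. Once this is established, both conclusions drop out by passing to kernel and cokernel of that induced map.

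First I would unpack $\widetilde{\mathbb B}^{-a}$ at $S$. As already noted in the text, one may identify $\widetilde{\mathbb B}^{-a}(\mathscr V)(S)=\bigoplus_{T\subseteq S,\,|T|=|S|-a}\mathscr V(T)$, since the $a!$ copies of $\mathscr V(T)$ appearing in $\mathbb B^{-a}(\mathscr V)(S)$ for each fixed image $S-T$ collapse to a single copy after tensoring with the sign module $H_{\mathcal R}^{\varepsilon}$. Next, I would carry out a Koszul-type sign computation to show that, for any $T''\subseteq S$ with $|T''|=|S|-2$, the restriction of $d^{-2}$ to the summand $\mathscr V(T'')$ sends $v$ to $\iota_{T_1}(v)-\iota_{T_2}(v)$ (up to an overall sign depending on the chosen ordering of $S$) in $\mathscr V(T_1)\oplus\mathscr V(T_2)$, where $T_1,T_2$ are the two subsets of $S$ of size $|S|-1$ containing $T''$ and $\iota_{T_i}:\mathscr V(T'')\to\mathscr V(T_i)$ is the inclusion-induced map.

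Granting this, I would identify $Q:=Coker(d^{-2})$ with $\underset{T\subsetneq S}{colim}\ \mathscr V(T)$ via universal properties. In one direction, the canonical cocone on $\mathscr V$ restricted to the poset of proper subsets gives a map $\widetilde{\mathbb B}^{-1}(\mathscr V)(S)\to\underset{T\subsetneq S}{colim}\ \mathscr V(T)$ that vanishes on $Im(d^{-2})$ by the displayed formula, yielding a map $Q\to\underset{T\subsetneq S}{colim}\ \mathscr V(T)$. Conversely, I would construct a cocone on $\mathscr V$ over the poset of proper subsets with vertex $Q$: for $|T|=|S|-1$ use the obvious map into $Q$, and for $|T|<|S|-1$ take the composition $\mathscr V(T)\to\mathscr V(T')\to Q$ for any chosen $T'$ with $T\subseteq T'\subsetneq S$ and $|T'|=|S|-1$. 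Well-definedness in $T'$ reduces to the case of two choices $T'_1,T'_2$ with $|T'_1\cap T'_2|=|S|-2$ (two distinct subsets of $S$ of size $|S|-1$ necessarily intersect in a subset of size $|S|-2$ that still contains $T$), and in that case the required identification of $\mathscr V(T\hookrightarrow T'_1)(v)$ and $\mathscr V(T\hookrightarrow T'_2)(v)$ in $Q$ is exactly the relation imposed by $d^{-2}$ applied to $\mathscr V(T\hookrightarrow T'_1\cap T'_2)(v)$. Compatibility with inclusions then follows by the same reduction, yielding the inverse map $\underset{T\subsetneq S}{colim}\ \mathscr V(T)\to Q$.

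Finally, under this identification, the map $\bar d:Q\to\mathscr V(S)$ induced by $\widetilde{\mathbb B}^{-1}(\mathscr V)(S)\to\mathscr V(S)$ is the canonical map from the colimit. The identity $H^{-1}=Ker(\bar d)$ immediately gives \eqref{eq5.10}. For \eqref{eq5.9}, we obtain $H^0=Coker(\bar d)=Coker\bigl(\underset{T\subsetneq S}{colim}\ \mathscr V(T)\to\mathscr V(S)\bigr)$, and to match this with $H_0(\mathscr V)(S)$ from \eqref{h0} I would note that every injection $\phi:U\hookrightarrow S$ with $|U|<|S|$ factors through the inclusion $\phi(U)\hookrightarrow S$, and every proper subset of $S$ is contained in one of size $|S|-1$, so that $Im(\bar d)$ equals the sum appearing in \eqref{h0}. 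The main obstacle is the sign bookkeeping in the second step: one must verify that the two coefficients of $\iota_{T_1}$ and $\iota_{T_2}$ in $d^{-2}|_{\mathscr V(T'')}$ are opposite, not equal, so that the cokernel relation becomes $\iota_{T_1}(v)=\iota_{T_2}(v)$ rather than $\iota_{T_1}(v)=-\iota_{T_2}(v)$. This amounts to tracing the alternating-sum differential of $\mathbb B^{-\ast}$ through the quotient by the sign representation of $S_a$, which is standard but must be written out carefully.
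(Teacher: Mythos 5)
Your proof is correct and takes essentially the same approach as the paper: identify $\underset{T\subsetneq S}{colim}\ \mathscr V(T)$ with the coequalizer of $\widetilde{\mathbb B}^{-2}(\mathscr V)(S)\rightrightarrows\widetilde{\mathbb B}^{-1}(\mathscr V)(S)$, i.e.\ with $Coker(d^{-2})$, and then read off $H^0$ and $H^{-1}$ as the cokernel and kernel of the induced map to $\mathscr V(S)$. Your explicit universal-property argument for the coequalizer identification, and the flagged sign check on $d^{-2}$, simply make precise steps the paper asserts without proof; the signs do work out as you anticipate since the two face maps out of $\widetilde{\mathbb B}^{-2}$ carry the alternating coefficients $(-1)^1$ and $(-1)^2$.
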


\begin{proof}
For any injection $\phi: T'\hookrightarrow S$ with $|T'|<|S|$, it is obvious that $\phi$ factors through a proper subset of $S$. Comparing with the definition in \eqref{h0}, we see
that $H_0(\mathscr V)= Coker\left(\underset{T\subsetneq S}{colim}\textrm{ }\mathscr V(T)\longrightarrow \mathscr V(S)\right)$. From the discussion above, we know that  $ \widetilde{\mathbb B}^{-1}(\mathscr V)(S)\in Mod-\mathcal R$ is a direct sum of all $\mathscr V(T)$ as $T$ varies over all the distinct subsets of
$S$ such that $|T|=|S|-1$. Since the  inclusion of any proper subset of $S$ factors through a subset of size $|S|-1$, we also observe that $H^{0}(\widetilde{\mathbb B}^{-\ast}(\mathscr V) )(S)=Coker( \widetilde{\mathbb B}^{-1}(\mathscr V)(S)\longrightarrow \mathscr V(S))= Coker\left(\underset{T\subsetneq S}{colim}\textrm{ }\mathscr V(T)\longrightarrow \mathscr V(S)\right)$. This proves \eqref{eq5.9}.

\smallskip
To prove \eqref{eq5.10}, we proceed as follows: for each subset $T\subseteq S$ of cardinality $|S|-2$, there are exactly two subsets $T_1,T_2\subseteq S$ each of cardinality $|S|-1$ such that $T\subseteq T_1,T_2$. This induces 
maps $\mathscr V(T)\longrightarrow \mathscr V(T_1)$ and $\mathscr V(T)\longrightarrow \mathscr V(T_2)$.  We now observe that
\begin{equation}\label{eq5.11}
\underset{T\subsetneq S}{colim}\textrm{ }\mathscr V(T)=Coeq\left(\xymatrix{\underset{\mbox{\tiny $\begin{array}{c} T\subseteq  S \\ |T|=|S|-2
\end{array}$}}{\bigoplus}\mathscr V(T) \ar@<-.5ex>[r]_(0.4){} \ar@<.5ex>[r]^(0.4){}& \underset{\mbox{\tiny $\begin{array}{c} T\subseteq  S \\ |T|=|S|-1
\end{array}$}}{\bigoplus}\mathscr V(T) }  \right)
\end{equation} From the definition of the differential in the complex in \eqref{Btildestar}, it is clear that the expression in \eqref{eq5.11} is identical to
$Coker( \widetilde{\mathbb B}^{-2}(\mathscr V) \longrightarrow \widetilde{\mathbb B}^{-1}(\mathscr V))(S)$. It follows that
\begin{equation*}
H^{-1}(\widetilde{\mathbb B}^{-\ast}(\mathscr V) )(S)= Ker(Coker( \widetilde{\mathbb B}^{-2}(\mathscr V) \longrightarrow \widetilde{\mathbb B}^{-1}(\mathscr V))(S)
\longrightarrow\mathscr V(S))= Ker\left(\underset{T\subsetneq S}{colim}\textrm{ }\mathscr V(T)\longrightarrow \mathscr V(S)\right)
\end{equation*}
\end{proof}

\begin{thm}\label{P5.3} Let $\mathscr V\in FI_{\mathcal R}$. Then, for each $a\geq 0$, the canonical map $H_a(\mathscr V)\longrightarrow \mathbb S^1 H_a(\mathscr V)$ is zero. 

\end{thm}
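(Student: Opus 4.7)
The plan: since the shift $\mathbb S^1$ is exact on $FI_{\mathcal R}$, it commutes with cohomology of complexes, so $\mathbb S^1 H_a(\mathscr V) = H^{-a}(\mathbb S^1\widetilde{\mathbb B}^{-\ast}(\mathscr V))$. By naturality of the canonical map $X:\mathrm{Id}\longrightarrow \mathbb S^1$, the morphism $H_a(\mathscr V)\longrightarrow \mathbb S^1 H_a(\mathscr V)$ is exactly the map induced on $H^{-a}$ by the chain map $X_\ast:\widetilde{\mathbb B}^{-\ast}(\mathscr V)\longrightarrow \mathbb S^1\widetilde{\mathbb B}^{-\ast}(\mathscr V)$ obtained by applying $X$ termwise. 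I therefore plan to produce an explicit chain homotopy between $X_\ast$ and the zero map.

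The key structural input is a canonical direct sum decomposition
\begin{equation*}
\mathbb S^1\widetilde{\mathbb B}^{-a}(\mathscr V)(S)\;=\;\widetilde{\mathbb B}^{-a}(\mathbb S^1\mathscr V)(S)\;\oplus\;\widetilde{\mathbb B}^{-(a-1)}(\mathscr V)(S),
\end{equation*}
obtained by applying the identification $\widetilde{\mathbb B}^{-a}(\mathscr V)(T)=\bigoplus_{U\subseteq T,\,|U|=a}\mathscr V(T\setminus U)$ at $T=S\sqcup[-1]$ and separating the $a$-subsets according to whether they contain the adjoined element $-1$. A direct inspection shows that $X_\ast$ lands entirely in the first summand (the one indexed by $U\not\ni -1$) and agrees there with $\widetilde{\mathbb B}^{-a}(X(\mathscr V))$.

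For each $a\geq 0$, I will define a natural transformation $h^{-a}:\widetilde{\mathbb B}^{-a}(\mathscr V)\longrightarrow \mathbb S^1\widetilde{\mathbb B}^{-(a+1)}(\mathscr V)$ as a signed multiple of the inclusion into the second summand of the analogous decomposition in degree $a+1$: the summand $\mathscr V(S\setminus U)$ indexed by an $a$-subset $U\subseteq S$ is sent by the identity to the summand of $\widetilde{\mathbb B}^{-(a+1)}(\mathscr V)(S\sqcup[-1])$ indexed by $U\sqcup\{-1\}$. Naturality in $S\in FI$ is immediate, since for any $\phi:S\to S'$ in $FI$, both $h^{-a}_{S'}\circ\widetilde{\mathbb B}^{-a}(\mathscr V)(\phi)$ and $\mathbb S^1\widetilde{\mathbb B}^{-(a+1)}(\mathscr V)(\phi)\circ h^{-a}_S$ send $\mathscr V(S\setminus U)$ to the summand indexed by $\phi(U)\sqcup\{-1\}$ via the same map $\mathscr V(\phi|_{S\setminus U})$.

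It then remains to verify the chain homotopy relation $d\circ h+h\circ d=X_\ast$ summand by summand. The differential applied to $h^{-a}(\mathscr V(S\setminus U))$ is an alternating sum of face maps indexed by elements of $U\sqcup\{-1\}$: removing $-1$ lands in the first summand of the decomposition of $\mathbb S^1\widetilde{\mathbb B}^{-a}(\mathscr V)(S)$ as the canonical map $\mathscr V(S\setminus U)\longrightarrow \mathbb S^1\mathscr V(S\setminus U)$, which is precisely $X_\ast$ on that summand; removing some $u\in U$ lands in the second summand, and the totality of these terms cancels $h\circ d$, since both are signed sums over $u\in U$ of the map $\mathscr V(S\setminus U)\longrightarrow\mathscr V((S\setminus U)\sqcup\{u\})$ placed in the summand indexed by $(U-u)\sqcup\{-1\}$. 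The main obstacle is sign bookkeeping: the alternating signs depend on a chosen order of $U\sqcup\{-1\}$, and the normalization of $h^{-a}$ (e.g.\ a prefactor of $(-1)^{a+1}$) must be chosen so that the "remove $u\in U$" contributions from $dh$ and $hd$ appear with opposite signs. Once these signs are aligned, $X_\ast$ is null-homotopic, hence $H^{-a}(X_\ast)=0$, which is the asserted statement.
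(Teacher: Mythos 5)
Your proof is correct and takes essentially the same approach as the paper. The paper defines a homotopy $G^{-b}:\mathbb B^{-b}\mathscr V\longrightarrow \mathbb S^1\mathbb B^{-b-1}\mathscr V$ at the level of the pre-quotient complex (indexed by injections $\phi:[b]\hookrightarrow S$, inserting the adjoined element $\ast$ at position $1$) and invokes \cite[Proposition 2.25]{Four1} for the descent to $\widetilde{\mathbb B}^{-\ast}$ and the sign verification; your $h^{-a}$ is exactly the image of $G^{-a}$ under this descent, written directly on $\widetilde{\mathbb B}^{-\ast}$ in terms of subsets $U\mapsto U\sqcup\{-1\}$, and you defer the same sign bookkeeping.
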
 

\begin{proof}
We consider the system of maps 
\begin{equation}
\{G^{-b}:\mathbb B^{-b}\mathscr V\longrightarrow \mathbb S^1\mathbb B^{-b-1}\mathscr V\}_{b\geq 0}
\end{equation} defined as follows: for a finite set $S$ and a map $\phi: [b]\longrightarrow S$, denote by $\bar\phi:[b+1]\longrightarrow S\sqcup [-1]$ the map given by
\begin{equation}
\bar\phi(i)=\left\{\begin{array}{ll}
\ast & \mbox{if $i=1$}\\
\phi(i-1) & \mbox{otherwise} \\
\end{array}\right.
\end{equation} where $[-1]$ has been chosen to be the single element set $\{\ast\}$.  Then, the identifications
\begin{equation}
 \mathscr V(S,\phi)=\mathscr V(S-\phi[b]) \overset{=}{\longrightarrow} \mathscr V(S\sqcup [-1]-\bar\phi[b+1])=\mathscr V(S\sqcup [-1],\bar\phi)
\end{equation} combine to determine the map $G^{-b}(S):\mathbb B^{-b}\mathscr V(S)\longrightarrow \mathbb B^{-b-1}\mathscr V(S\sqcup [-1])=\mathbb S^1\mathbb B^{-b-1}\mathscr V(S)$. As in the proof of \cite[Proposition 2.25]{Four1}, it may be verified that the maps $G^{-b}$ induce a homotopy equivalence between the zero map and the canonical
map $\widetilde{\mathbb B}^{-\ast}(\mathscr V) \longrightarrow \mathbb S^1\widetilde{\mathbb B}^{-\ast}(\mathscr V) $. 
\end{proof}

\begin{thm}\label{P5.4} Suppose that $Mod-\mathcal R$ is locally noetherian. Let $\mathscr V\in FI_{\mathcal R}$ be a finitely generated object. Then, for each $a\geq 0$, there exists
$N\geq 0$ such that $H_a(\mathscr V)_n=0$ for all $n\geq N$. 
\end{thm}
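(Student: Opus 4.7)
The plan is to combine two facts already in hand: that $H_a(\mathscr V)$ is finitely generated, and that by Proposition \ref{P5.3} the canonical shift morphism $H_a(\mathscr V)\to \mathbb S^1H_a(\mathscr V)$ vanishes. The first gives a uniform degree bound on generators, and the second forces any map $H_a(\mathscr V)(T)\to H_a(\mathscr V)(S)$ coming from an injection $T\hookrightarrow S$ with $|T|<|S|$ to be zero.

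First I would verify that $H_a(\mathscr V)$ is finitely generated. The paper notes after \eqref{Btildestar} that $\mathbb B^{-a}({_d\mathscr M_r}) = {_{a+d}\mathscr M_r}$, so $\mathbb B^{-a}$ preserves finite generation; since $\widetilde{\mathbb B}^{-a}(\mathscr V)$ is a quotient of $\mathbb B^{-a}(\mathscr V)$, it too is finitely generated. Because $Mod\text{-}\mathcal R$ is locally noetherian, Theorem \ref{Th2.8} makes $FI_{\mathcal R}$ locally noetherian, so the subquotient $H_a(\mathscr V)=H^{-a}(\widetilde{\mathbb B}^{-\ast}(\mathscr V))$ is finitely generated. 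By Proposition \ref{P2.6} and Proposition \ref{P2.7}, we can then choose $d\geq 0$ with $H_0(H_a(\mathscr V))_n=0$ for $n>d$, i.e.\ $H_a(\mathscr V)$ is generated in degree $\leq d$. Proposition \ref{L2.4}(e) gives an epimorphism $\bigoplus_{i\in I}{_{d_i}\mathscr M_{r_i}}\twoheadrightarrow H_a(\mathscr V)$ with $d_i\leq d$, which translates into the statement that for every finite set $U$ the abelian group $H_a(\mathscr V)(U)$ is generated by the images of $H_a(\mathscr V)(\phi)\colon H_a(\mathscr V)(T)\to H_a(\mathscr V)(U)$ as $\phi\colon T\hookrightarrow U$ ranges over injections with $|T|\leq d$.

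Next, I would take $N:=d+1$ and show $H_a(\mathscr V)_n=0$ for all $n\geq N$. Fix such an $n$ and an injection $\phi\colon T\hookrightarrow [n]$ with $|T|\leq d<n$. Since $\phi(T)\subsetneq [n]$, pick $i\in [n]\setminus\phi(T)$; then $\phi$ factors as
\begin{equation*}
T\xrightarrow{\phi'}[n]\setminus\{i\}\xhookrightarrow{\iota}[n].
\end{equation*}
Because the shift functor $\mathbb S^1$ is independent of the choice of a set $[-1]$ of cardinality one, we may choose $[-1]=\{i\}$ and identify $([n]\setminus\{i\})\sqcup[-1]=[n]$; with this identification $\iota$ is exactly the canonical inclusion appearing in the definition of $\psi_1^{H_a(\mathscr V)}$. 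Thus $H_a(\mathscr V)(\iota)=\psi_1^{H_a(\mathscr V)}([n]\setminus\{i\})$, which is zero by Proposition \ref{P5.3}. Hence $H_a(\mathscr V)(\phi)=H_a(\mathscr V)(\iota)\circ H_a(\mathscr V)(\phi')=0$, and summing over all such $\phi$ yields $H_a(\mathscr V)_n=0$.

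I do not expect a serious obstacle: the main point to be careful about is the identification in the factorization above, namely that an arbitrary $(n-1)$-subset inclusion into $[n]$ realizes (up to the bijection collapsing $S\sqcup[-1]$ onto its underlying $n$-element set) the canonical morphism to which Proposition \ref{P5.3} applies. Once this bookkeeping is made explicit, the argument collapses to: finitely generated plus vanishing of the shift map forces $H_a(\mathscr V)$ to be concentrated in degrees $\leq d$.
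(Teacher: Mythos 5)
Your proof is correct, but it follows a genuinely different route from the paper's. The paper specializes the general torsion-theoretic machinery of Sections 3--4: it applies the zero torsion class $\tau_0$ on $Mod$-$\mathcal R$, which via Proposition \ref{xxP3.2} induces the torsion class $\overline{\mathcal T}_0$ of ``eventually zero'' finitely generated $FI$-modules, and then invokes the explicit formula of Theorem \ref{P3.8cf} for the torsion subobject. Since Proposition \ref{P5.3} forces the maps $\psi_b^{H_a(\mathscr V)}$ to vanish, the formula collapses to give $\overline{\mathcal T}_0(H_a(\mathscr V))=H_a(\mathscr V)$, i.e.\ $H_a(\mathscr V)_n=0$ for $n\gg 0$. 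By contrast, your argument is elementary and self-contained: you extract from finite generation a degree bound $d$ via Propositions \ref{P2.6}, \ref{P2.7} and \ref{L2.4}(e), and then note directly that Proposition \ref{P5.3} kills every transition map $H_a(\mathscr V)(\phi)$ with $|T|<|U|$ by factoring $\phi$ through a codimension-one inclusion, which up to relabelling \emph{is} the component of $\psi_1^{H_a(\mathscr V)}$. Both arguments hinge on the same two inputs (finite generation of $H_a(\mathscr V)$ and the vanishing of the shift map), but yours avoids the whole torsion-theoretic superstructure, while the paper's illustrates that this proposition is a clean corollary of the machinery built for torsion subobjects; in effect you are re-deriving, in this special case, the mechanism that is packaged inside the proof of Lemma \ref{Lem3.7k}. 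The one point requiring care in your write-up --- the identification of an arbitrary $(n-1)$-subset inclusion $\iota$ with a component of $\psi_1$, up to a bijection relabelling $S\sqcup[-1]$ --- is handled correctly, since the relabelling bijection is an isomorphism in $FI$ and so does not affect the vanishing.
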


\begin{proof} We have explained before that if   $\mathscr V\in FI_{\mathcal R}$ is finitely generated, $\widetilde{\mathbb B}^{-a}(\mathscr V)$
is finitely generated. Since $FI_{\mathcal R}$ is locally noetherian, it follows that $H_a(\mathscr V)$ is also  finitely generated. We consider the trivial torsion theory $\tau_0$ on $Mod-\mathcal R$ whose torsion class is $0$. Using Proposition \ref{xxP3.2}, this induces a torsion class on $FI^{fg}_{\mathcal R}$ whose torsion class $\overline{\mathcal T}_0$ is given by
\begin{equation}
Ob(\overline{\mathcal T}_0):=\{\mbox{$\mathscr V\in Ob(FI_{\mathcal R}^{fg})$ $\vert$ $\mathscr V_n=0$ for $n\gg 0$}\}
\end{equation}  Using Theorem \ref{P3.8cf}, we know that the torsion subobject of  $H_a(\mathscr V)$ is given by 
\begin{equation}\label{ystors3}
\begin{array}{ll}
\overline{\mathcal T}_0(H_a(\mathscr V))(S)&=\underset{b\geq 0}{colim}\textrm{ }lim\left(
\begin{CD}H_a(\mathscr V )(S)@>\psi_b^{H_a(\mathscr V)}(S) >>{\mathbb S}^bH_a(\mathscr V )(S) @<<<\mathcal T_0( {\mathbb S}^bH_a(\mathscr V)(S))
\end{CD}\right)\\
&=\underset{b\geq 1}{colim}\textrm{ }lim\left(
\begin{CD}H_a(\mathscr V )(S)@>\psi_b^{H_a(\mathscr V)}(S) >>{\mathbb S}^bH_a(\mathscr V )(S) @<<<0
\end{CD}\right)\\
\end{array}
\end{equation} From Proposition \ref{P5.3} and the expression in \eqref{ystors3}, it now follows that $\overline{\mathcal T}_0(H_a(\mathscr V))(S)=H_a(\mathscr V)(S)$. Hence,
$H_a(\mathscr V)\in \overline{\mathcal T}_0$ and the result follows. 

\end{proof}

We now have  an analogue of \cite[Theorem C]{Four1}.

\begin{Thm}\label{T5.5}  Suppose that $Mod-\mathcal R$ is locally noetherian. Let $\mathscr V\in FI_{\mathcal R}$ be a finitely generated object.  Then, there exists $N\geq 0$ such that
\begin{equation}\label{5.17xe}
\underset{\tiny \begin{array}{c} T\subseteq S \\ |T|\leq N\\ \end{array}}{colim}\textrm{ }\mathscr V(T)=\mathscr V(S)
\end{equation} for each finite set $S$.

\end{Thm}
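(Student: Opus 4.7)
The plan is to combine Propositions \ref{P5.2} and \ref{P5.4} to first express $\mathscr V(S)$ as a colimit over proper subsets whenever $|S|$ is sufficiently large, then iterate down to subsets of bounded size. Applying Proposition \ref{P5.4} to both $a=0$ and $a=1$, I obtain a single integer $N\geq 0$ such that $H_0(\mathscr V)_n = 0$ and $H_1(\mathscr V)_n = 0$ for every $n > N$. Combining with the identifications from Proposition \ref{P5.2}, this means that for every finite set $S$ with $|S| > N$, the canonical morphism
\[
\underset{T\subsetneq S}{colim}\textrm{ }\mathscr V(T)\longrightarrow \mathscr V(S)
\]
is an isomorphism, since its cokernel $H_0(\mathscr V)(S)$ and its kernel $H_1(\mathscr V)(S)$ both vanish. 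I claim this same $N$ works for the conclusion of the theorem, and I would establish this by induction on $|S|$.

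If $|S|\leq N$, then $S$ itself lies in the indexing poset $\{T\subseteq S : |T|\leq N\}$ as its maximum element, so the colimit is trivially $\mathscr V(S)$. If $|S|>N$, I would combine the isomorphism above with the inductive hypothesis applied to each proper $T\subsetneq S$ (each of which satisfies $|T|<|S|$), obtaining
\[
\mathscr V(S)\cong \underset{T\subsetneq S}{colim}\textrm{ }\underset{T'\subseteq T,\textrm{ }|T'|\leq N}{colim}\textrm{ }\mathscr V(T').
\]
The final step is a cofinality argument to rewrite this double colimit as $\underset{T'\subseteq S,\textrm{ }|T'|\leq N}{colim}\textrm{ }\mathscr V(T')$: for each fixed $T'\subseteq S$ with $|T'|\leq N < |S|$, the poset of $T$ satisfying $T'\subseteq T\subsetneq S$ contains $T'$ itself as a minimum element, and is therefore non-empty and connected, which is exactly the condition needed for the forgetful functor $(T',T)\mapsto T'$ from the Grothendieck-type total category to induce the claimed isomorphism of colimits.

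The substantive content comes entirely from Propositions \ref{P5.2} and \ref{P5.4}; the main obstacle is the bookkeeping in the reindexing step, where one must carefully verify that the fibers over each $T'$ of size at most $N$ are non-empty and connected so that the double colimit collapses as asserted. Once this is done, the rest of the argument is a formal induction on the cardinality of $S$.
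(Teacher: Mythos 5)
Your proof is correct and follows essentially the same strategy as the paper's: use Proposition \ref{P5.4} (with $a=0,1$) to find $N$ killing $H_0(\mathscr V)_n$ and $H_1(\mathscr V)_n$ for large $n$, combine with the identifications in Proposition \ref{P5.2} to see that $\underset{T\subsetneq S}{colim}\,\mathscr V(T)\to\mathscr V(S)$ is an isomorphism once $|S|>N$, and then induct on $|S|$ while collapsing the resulting double colimit. The paper simply asserts the reindexing identity (equation \eqref{5.18xe}) without comment, whereas you spell out the cofinality justification; that justification is valid (for a fixed $T'$ with $|T'|\leq N<|S|$, the comma category has $(T',T')$ as a minimum element and is thus nonempty and connected), so the two proofs differ only in presentation, not in substance.
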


\begin{proof} Using Proposition \ref{P5.4}, we can choose $N\geq 1$ such that $H_0(\mathscr V)_n=H_1(\mathscr V)_n=0$ for all $n\geq N$. It is clear that
\eqref{5.17xe} holds for all $S$ such that $|S|\leq N$.  We consider a set $S$ with  $|S|>N$ and suppose that \eqref{5.17xe} holds for all finite sets $U$ of cardinality $<|S|$. We observe that
\begin{equation}\label{5.18xe}
\underset{\tiny \begin{array}{c} T\subseteq S \\ |T|\leq N\\ \end{array}}{colim}\textrm{ }\mathscr V(T)=\underset{U\subsetneq S}{colim}\textrm{ }\underset{\tiny \begin{array}{c} T\subseteq U \\ |T|\leq N\\ \end{array}}{colim}\textrm{ }\mathscr V(T)
\end{equation} Since each $U$ appearing in \eqref{5.18xe} has cardinality $<|S|$, we have
\begin{equation}
\underset{\tiny \begin{array}{c} T\subseteq S \\ |T|\leq N\\ \end{array}}{colim}\textrm{ }\mathscr V(T)=\underset{U\subsetneq S}{colim}\textrm{ }\mathscr V(U)
\end{equation} Finally since $|S|>N$, we know that $H_0(\mathscr V)(S)=H_1(\mathscr V)(S)=0$. The result is now clear from the expressions in Proposition \ref{P5.2}. 
\end{proof}

So far in this section, we have used the properties of $H_a(\mathscr V)$ for $\mathscr V$ finitely generated. We will now consider the objects $H_a(\mathscr V)$ when $\mathscr V$ is shift finitely generated. 

\begin{lem}\label{L5.6} Let $\mathscr V\in FI_{\mathcal R}$ be shift finitely generated. Then, for any $a\geq  0$, $\mathbb B^{-a}(\mathscr V)$ is also shift
finitely generated.
\end{lem}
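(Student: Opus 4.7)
The plan is to produce an explicit decomposition of $\mathbb S^d\mathbb B^{-a}(\mathscr V)$ for sufficiently large $d$ and read off finite generation from it. By hypothesis, choose $d_0$ such that $\mathbb S^{d_0}\mathscr V$ is finitely generated, and set $d := d_0 + a$ (so that $d-k \geq d_0$ for all $0 \leq k \leq a$). I will verify that $\mathbb S^d \mathbb B^{-a}(\mathscr V)$ is finitely generated, which gives the conclusion.

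First I would compute, for each finite set $S$,
\begin{equation*}
\mathbb S^d\mathbb B^{-a}(\mathscr V)(S) = \mathbb B^{-a}(\mathscr V)(S\sqcup [-d]) = \bigoplus_{\phi:[a]\hookrightarrow S\sqcup [-d]} \mathscr V\bigl((S\sqcup [-d]) - \phi[a]\bigr)
\end{equation*}
and partition the indexing set of injections according to $k := |\phi[a]\cap [-d]|$. For each $0\leq k\leq \min(a,d)$, specifying such a $\phi$ amounts to choosing a $k$-subset $J\subseteq [a]$, a $k$-subset $T\subseteq [-d]$, a bijection $J\to T$, and an injection $\psi:[a-k]\hookrightarrow S$ (after identifying $[a]-J$ with $[a-k]$). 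The corresponding summand then equals $\mathscr V\bigl((S-\psi[a-k])\sqcup ([-d]-T)\bigr)$, which is canonically $\mathbb S^{d-k}\mathscr V(S-\psi[a-k])$.

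Summing over $T$, over the bijections $J\to T$, over $J$, and finally over $\psi$ (and noting that $[-d]-T$ can be identified with $[-(d-k)]$), this yields a natural isomorphism
\begin{equation*}
\mathbb S^d\mathbb B^{-a}(\mathscr V) \;\cong\; \bigoplus_{k=0}^{a}\, \bigl(\mathbb B^{-(a-k)}\mathbb S^{d-k}\mathscr V\bigr)^{\oplus c_k},\qquad c_k := \binom{a}{k}\frac{d!}{(d-k)!}
\end{equation*}
For each $k\in\{0,\ldots,a\}$ we have $d-k\geq d_0$, so Lemma \ref{L3.10tp}(a) guarantees that $\mathbb S^{d-k}\mathscr V$ is finitely generated. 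Applying $\mathbb B^{-(a-k)}$ preserves finite generation (as observed in the excerpt just before Proposition \ref{P5.2}, using that $\mathbb B^{-j}({_{e}\mathscr M_r}) = {_{e+j}\mathscr M_r}$ together with right-exactness). Therefore each summand is finitely generated, and since the outer sum is finite, $\mathbb S^d\mathbb B^{-a}(\mathscr V)$ is finitely generated, proving $\mathbb B^{-a}(\mathscr V)\in FI^{sfg}_{\mathcal R}$.

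The only delicate step is the bookkeeping in the decomposition: one must check carefully that the identification of $[a]-J$ with $[a-k]$ and of $[-d]-T$ with $[-(d-k)]$ is compatible enough to give a genuine natural isomorphism in $FI_{\mathcal R}$ (not merely a pointwise one), but this is routine since all these identifications respect the $FI$-action on $S$.
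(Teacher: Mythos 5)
Your proof is correct and follows essentially the same route as the paper's: both decompose $\mathbb S^d\mathbb B^{-a}(\mathscr V)$ (for $d$ large enough that $d-k$ still works for each $0\leq k\leq a$) by stratifying injections $\phi:[a]\hookrightarrow S\sqcup[-d]$ according to $|\phi[a]\cap[-d]|$, arriving at the isomorphism $\mathbb S^d\mathbb B^{-a}(\mathscr V)\cong\bigoplus_{k=0}^{a}\bigl(\mathbb B^{-(a-k)}\mathbb S^{d-k}\mathscr V\bigr)^{\oplus c_k}$ with $c_k=\binom{a}{k}\cdot|\!([k],[d])|$, and then invoking that $\mathbb S^{d-k}\mathscr V$ is finitely generated and that $\mathbb B^{-j}$ preserves finite generation. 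The bookkeeping and the citations match the paper's argument.
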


\begin{proof}
Since $\mathscr V\in FI_{\mathcal R}^{sfg}$, we choose $d\geq 0$ such that $\mathbb S^d\mathscr V$ is finitely generated. We choose $e\geq a+d$. We will show that
$\mathbb S^{e}\mathbb B^{-a}(\mathscr V)$ is finitely generated. For any finite set $T$, we see that
\begin{equation}\label{eq5/20}
\begin{array}{l}
\mathbb S^{e}\mathbb B^{-a}(\mathscr V)(T)=\mathbb B^{-a}\mathscr V(T\sqcup [-e])\\
=\underset{\phi:[a]\rightarrow T\sqcup [-e]}{\bigoplus}\mathscr V(T\sqcup [-e]-\phi[a]) \\
=\underset{j=0}{\overset{a}{\bigoplus}}\left(\underset{\mbox{\tiny $\begin{array}{c} \phi:[a]\rightarrow T\sqcup [-e] \\
|Im(\phi)\cap T|=a-j\\
\end{array}$}}{\bigoplus}\mathscr V(T\sqcup [-e]-\phi[a]) \right)
=\underset{j=0}{\overset{a}{\bigoplus}}\left(\underset{\mbox{\tiny $\begin{array}{c}\phi=(\phi',\phi'')\\ \phi':[a-j]\rightarrow T\\  \phi'':[j]\rightarrow [-e]
\end{array}$}}{\bigoplus}\mathscr V(T\sqcup [-e]-\phi[a]) \right)^{\oplus {{a}\choose{j} }}\\ =\underset{j=0}{\overset{a}{\bigoplus}}\left(\underset{\mbox{\tiny $\begin{array}{c}\phi':[a-j]\rightarrow T\\  \phi'':[j]\rightarrow [-e]
\end{array}$}}{\bigoplus}\mathscr V((T-\phi'[a-j])\sqcup ([-e]-\phi''[j])) \right)^{\oplus {{a}\choose{j} }}\\
=\underset{j=0}{\overset{a}{\bigoplus}}\left(\underset{\mbox{\tiny $\begin{array}{c}\phi':[a-j]\rightarrow T\\  
\end{array}$}}{\bigoplus}\mathscr V((T-\phi'[a-j])\sqcup [-(e-j)]) \right)^{\oplus \left({{a}\choose{j} }\cdot ([j],[e])\right)}\\
=\underset{j=0}{\overset{a}{\bigoplus}}\left(\underset{\mbox{\tiny $\begin{array}{c}\phi':[a-j]\rightarrow T\\ 
\end{array}$}}{\bigoplus}\mathbb S^{e-j}\mathscr V(T-\phi'[a-j])\right)^{\oplus \left({{a}\choose{j} }\cdot ([j],[e])\right)}
\\ =\underset{j=0}{\overset{a}{\bigoplus}}\left(\mathbb B^{-(a-j)}(\mathbb S^{e-j}\mathscr V)(T)\right)^{\oplus \left({{a}\choose{j} }\cdot ([j],[e])\right)}\\
\end{array}
\end{equation} Since $e\geq a+d$, we know that $e-j\geq d$ for each $0\leq j\leq a$. Hence, each $\mathbb S^{e-j}\mathscr V$ appearing in the direct sum 
in \eqref{eq5/20} is finitely generated. Then, each $\mathbb B^{-(a-j)}(\mathbb S^{e-j}\mathscr V)$ is finitely generated and it is now clear from \eqref{eq5/20}
that  $\mathbb S^{e}\mathbb B^{-a}(\mathscr V)$ is finitely generated. 

\end{proof}

\begin{thm}\label{P5.7}Let $Mod-\mathcal R$ be locally noetherian.  Let $\mathscr V\in FI_{\mathcal R}$ be shift finitely generated. Then, for any $a\geq  0$, $H_a(\mathscr V)$ is also shift
finitely generated.
\end{thm}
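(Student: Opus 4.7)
The plan is to exploit the fact that $H_a(\mathscr V)$ is a subquotient of $\widetilde{\mathbb B}^{-a}(\mathscr V)$ and then use two key ingredients that are already in place: Lemma \ref{L5.6}, which tells us $\mathbb B^{-a}(\mathscr V)$ is shift finitely generated, and Proposition \ref{P3.11}, which tells us $FI^{sfg}_{\mathcal R}$ is a Serre subcategory (closed under subobjects, quotients and extensions).

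First, I would observe that since $\mathscr V \in FI^{sfg}_{\mathcal R}$, Lemma \ref{L5.6} immediately gives $\mathbb B^{-a}(\mathscr V) \in FI^{sfg}_{\mathcal R}$ for every $a \geq 0$. Next, from the construction in \eqref{btilde-a}, $\widetilde{\mathbb B}^{-a}(\mathscr V)$ is a quotient of $\mathbb B^{-a}(\mathscr V)$ (as already noted in the paragraph following \eqref{Btildestar}). Since $FI^{sfg}_{\mathcal R}$ is closed under quotients by Proposition \ref{P3.11}, we conclude $\widetilde{\mathbb B}^{-a}(\mathscr V) \in FI^{sfg}_{\mathcal R}$ for each $a\geq 0$.

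Finally, by definition $H_a(\mathscr V) = H^{-a}(\widetilde{\mathbb B}^{-\ast}(\mathscr V))$, which is the subquotient
\begin{equation*}
H_a(\mathscr V) = \operatorname{Ker}\bigl(\widetilde{\mathbb B}^{-a}(\mathscr V) \longrightarrow \widetilde{\mathbb B}^{-(a-1)}(\mathscr V)\bigr) \bigm/ \operatorname{Im}\bigl(\widetilde{\mathbb B}^{-(a+1)}(\mathscr V) \longrightarrow \widetilde{\mathbb B}^{-a}(\mathscr V)\bigr).
\end{equation*}
The kernel is a subobject of $\widetilde{\mathbb B}^{-a}(\mathscr V) \in FI^{sfg}_{\mathcal R}$, hence itself lies in $FI^{sfg}_{\mathcal R}$ by the closure under subobjects; the homology is then a quotient of this kernel, so again lies in $FI^{sfg}_{\mathcal R}$ by closure under quotients. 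This yields $H_a(\mathscr V) \in FI^{sfg}_{\mathcal R}$, as desired.

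Since all the heavy lifting is already packaged in Lemma \ref{L5.6} (where the combinatorial unpacking of $\mathbb S^e \mathbb B^{-a}$ is carried out) and in Proposition \ref{P3.11} (whose proof used that $Mod-\mathcal R$ is locally noetherian to handle extensions), there is no substantive obstacle here; the only thing to keep in mind is that we need the locally noetherian hypothesis precisely because $FI^{sfg}_{\mathcal R}$ being a Serre subcategory relies on it, which is why the hypothesis reappears in the statement.
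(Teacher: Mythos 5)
Your proof is correct and follows essentially the same route as the paper: apply Lemma \ref{L5.6} to get $\mathbb B^{-a}(\mathscr V)\in FI^{sfg}_{\mathcal R}$, then use the Serre subcategory property from Proposition \ref{P3.11} to pass first to the quotient $\widetilde{\mathbb B}^{-a}(\mathscr V)$ and then to the subquotient $H_a(\mathscr V)$. You have merely spelled out the subquotient step, which the paper summarizes as ``from the definitions, it is now clear.''
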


\begin{proof} From Lemma \ref{L5.6}, we know that  $\mathbb B^{-a}(\mathscr V)$ is also shift
finitely generated. We have shown in Proposition \ref{P3.11} that $FI_{\mathcal R}^{sfg}$ is a Serre subcategory. From the definitions, it is now clear that
$\widetilde{\mathbb B}^{-a}(\mathscr V)$ and hence $H_a(\mathscr V)$ lie in $FI^{sfg}_{\mathcal R}$. 

\end{proof}

\begin{Thm}\label{P5.8} Suppose that $Mod-\mathcal R$ is locally noetherian. Let $\mathscr V\in FI_{\mathcal R}$ be a shift finitely generated object. Fix $a\geq 0$ and consider  any finitely generated subobject $\mathscr W\subseteq H_a(\mathscr V)$. Then,  there exists
$N\geq 0$ such that $\mathscr W_n=0$ for all $n\geq N$. 
\end{Thm}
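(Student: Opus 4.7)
The plan is to recognize that the statement is precisely the assertion that $H_a(\mathscr V)$ lies in the torsion class $\overline{\mathcal T}_0^{sfg}$ attached to the trivial (zero) torsion theory $\tau_0=(0,Mod-\mathcal R)$ on $Mod-\mathcal R$, and to deduce this by combining the formula of Theorem \ref{P3.17cf} with the homotopy triviality statement of Proposition \ref{P5.3}.

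First, I would note that $\tau_0$ is trivially a hereditary torsion theory on $Mod-\mathcal R$. By Proposition \ref{xxP3.2} (or rather its use in the proof of Proposition \ref{P5.4}), the induced torsion class on $FI^{fg}_{\mathcal R}$ is
\begin{equation*}
Ob(\overline{\mathcal T}_0)=\{\mbox{$\mathscr U\in Ob(FI^{fg}_{\mathcal R})$ $\vert$ $\mathscr U_n=0$ for $n\gg 0$}\},
\end{equation*}
and by Proposition \ref{P3.13} the corresponding hereditary torsion class on $FI^{sfg}_{\mathcal R}$ is
\begin{equation*}
Ob(\overline{\mathcal T}_0^{sfg})=\{\mbox{$\mathscr U\in Ob(FI^{sfg}_{\mathcal R})$ $\vert$ every finitely generated $\mathscr W\subseteq \mathscr U$ lies in $\overline{\mathcal T}_0$}\}.
\end{equation*}
Thus the conclusion of the statement is literally that $H_a(\mathscr V)\in \overline{\mathcal T}_0^{sfg}$.

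Next, I would check that Theorem \ref{P3.17cf} applies to $H_a(\mathscr V)$. By Proposition \ref{P5.7}, since $\mathscr V$ is shift finitely generated and $Mod-\mathcal R$ is locally noetherian, $H_a(\mathscr V)$ is shift finitely generated, so it belongs to $FI^{sfg}_{\mathcal R}$. Applying Theorem \ref{P3.17cf} to $\tau_0$ and $H_a(\mathscr V)$, we get
\begin{equation*}
\overline{\mathcal T}_0^{sfg}(H_a(\mathscr V))(S)=\underset{b\geq 0}{colim}\textrm{ }lim\left(
\begin{CD}H_a(\mathscr V)(S)@>\psi_b^{H_a(\mathscr V)}(S)>>(\mathbb S^bH_a(\mathscr V))(S)@<<<0\end{CD}\right)=\underset{b\geq 0}{colim}\textrm{ }Ker\bigl(\psi_b^{H_a(\mathscr V)}(S)\bigr)
\end{equation*}
for every finite set $S$. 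To conclude that this colimit equals $H_a(\mathscr V)(S)$, it suffices to exhibit some $b\geq 1$ for which $\psi_b^{H_a(\mathscr V)}$ is the zero morphism. But this is exactly Proposition \ref{P5.3}, which asserts that the canonical map $H_a(\mathscr V)\longrightarrow \mathbb S^1H_a(\mathscr V)$ (which is $\psi_1^{H_a(\mathscr V)}$) vanishes. Hence $\overline{\mathcal T}_0^{sfg}(H_a(\mathscr V))=H_a(\mathscr V)$, so $H_a(\mathscr V)\in \overline{\mathcal T}_0^{sfg}$, and the definition of $\overline{\mathcal T}_0^{sfg}$ immediately yields the desired conclusion for any finitely generated $\mathscr W\subseteq H_a(\mathscr V)$.

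There is no real obstacle here; the argument is a straightforward packaging of results already in place. The only thing to be slightly careful about is to confirm that the map called ``canonical'' in Proposition \ref{P5.3} is indeed the same as $\psi_1^{H_a(\mathscr V)}$ in the notation of Section 3, which is clear since both are induced componentwise by the inclusion $S\hookrightarrow S\sqcup[-1]$.
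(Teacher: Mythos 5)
Your proposal is correct and follows essentially the same route as the paper's own proof: reduce to showing $H_a(\mathscr V)\in\overline{\mathcal T}_0^{sfg}$ for the zero torsion theory, invoke Proposition \ref{P5.7} to place $H_a(\mathscr V)$ in $FI^{sfg}_{\mathcal R}$, apply the torsion-subobject formula of Theorem \ref{P3.17cf}, and conclude via the vanishing of the shift map from Proposition \ref{P5.3}. Your extra remark identifying the map in Proposition \ref{P5.3} with $\psi_1^{H_a(\mathscr V)}$ is a harmless (and welcome) explicitation of a step the paper leaves implicit.
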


\begin{proof} Since $\mathscr V\in FI^{sfg}_{\mathcal R}$, we know from Proposition \ref{P5.7} that $H_a(\mathscr V)$ is shift 
 finitely generated.  We consider the trivial torsion theory $\tau_0$ on $Mod-\mathcal R$ whose torsion class is $0$. Using Proposition \ref{P3.13}, this induces a torsion class on $FI^{sfg}_{\mathcal R}$ whose torsion class $\overline{\mathcal T}^{sfg}_0$ is given by
\begin{equation}
Ob(\overline{\mathcal T}^{sfg}_0):=\{\mbox{$\mathscr V\in Ob(FI_{\mathcal R}^{sfg})$ $\vert$ Every finitely generated $\mathscr W\subseteq \mathscr V$ satisfies $\mathscr W_n=0$ for $n\gg 0$}\}
\end{equation}  Using Theorem \ref{P3.17cf}, we know that the torsion subobject of  $H_a(\mathscr V)$ is given by 
\begin{equation}\label{ystors3x}
\begin{array}{ll}
\overline{\mathcal T}_0^{sfg}(H_a(\mathscr V))(S)&=\underset{b\geq 0}{colim}\textrm{ }lim\left(
\begin{CD}H_a(\mathscr V )(S)@>\psi_b^{H_a(\mathscr V)}(S) >>{\mathbb S}^bH_a(\mathscr V )(S) @<<<\mathcal T_0( {\mathbb S}^bH_a(\mathscr V)(S))
\end{CD}\right)\\
&=\underset{b\geq 1}{colim}\textrm{ }lim\left(
\begin{CD}H_a(\mathscr V )(S)@>\psi_b^{H_a(\mathscr V)}(S) >>{\mathbb S}^bH_a(\mathscr V )(S) @<<<0
\end{CD}\right)\\
\end{array}
\end{equation} From Proposition \ref{P5.3} and the expression in \eqref{ystors3x}, it now follows that $\overline{\mathcal T}_0^{sfg}(H_a(\mathscr V))(S)=H_a(\mathscr V)(S)$. Hence,
$H_a(\mathscr V)\in \overline{\mathcal T}^{sfg}_0$ and the result follows. 

\end{proof}

We conclude with the following result. 

\begin{cor}\label{Cor6.9}  Suppose that $Mod-\mathcal R$ is locally noetherian. Let $\mathscr V\in FI_{\mathcal R}$ be a shift finitely generated object such that
$H_0(\mathscr V)$ and $H_1(\mathscr  V)$ are finitely generated. Then, there exists $N\geq 0$ such that
\begin{equation}\label{5.17xet}
\underset{\tiny \begin{array}{c} T\subseteq S \\ |T|\leq N\\ \end{array}}{colim}\textrm{ }\mathscr V(T)=\mathscr V(S)
\end{equation} for each finite set $S$.

\end{cor}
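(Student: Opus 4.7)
The plan is to reduce this to an imitation of the proof of Theorem \ref{T5.5}, with Proposition \ref{P5.8} playing the role that Proposition \ref{P5.4} played there. Since $\mathscr V$ is shift finitely generated, Proposition \ref{P5.8} applies. The hypothesis provides finitely generated subobjects of $H_0(\mathscr V)$ and $H_1(\mathscr V)$ equal to the whole of $H_0(\mathscr V)$ and $H_1(\mathscr V)$ respectively, so applying Proposition \ref{P5.8} with $a = 0$ and $a = 1$ to $\mathscr W := H_0(\mathscr V)$ and $\mathscr W := H_1(\mathscr V)$ yields integers $N_0, N_1 \geq 0$ such that $H_0(\mathscr V)_n = 0$ for $n \geq N_0$ and $H_1(\mathscr V)_n = 0$ for $n \geq N_1$. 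Set $N := \max(N_0, N_1)$.

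With this $N$ in hand, I would argue by induction on the cardinality of the finite set $S$ that
\begin{equation*}
\underset{\tiny \begin{array}{c} T \subseteq S \\ |T| \leq N \end{array}}{colim}\textrm{ }\mathscr V(T) = \mathscr V(S).
\end{equation*}
For $|S| \leq N$ the identity $S \subseteq S$ appears in the indexing system, and the equality is immediate. For the inductive step, suppose $|S| > N$ and that the claim holds for all finite sets of cardinality strictly less than $|S|$. Using the cofinality rewriting
\begin{equation*}
\underset{\tiny \begin{array}{c} T \subseteq S \\ |T| \leq N \end{array}}{colim}\textrm{ }\mathscr V(T) = \underset{U \subsetneq S}{colim}\textrm{ }\underset{\tiny \begin{array}{c} T \subseteq U \\ |T| \leq N \end{array}}{colim}\textrm{ }\mathscr V(T)
\end{equation*}
from \eqref{5.18xe} and the induction hypothesis applied to each proper $U \subsetneq S$, this reduces to showing $\underset{U \subsetneq S}{colim}\textrm{ }\mathscr V(U) = \mathscr V(S)$, i.e.\ that the canonical map $\underset{U \subsetneq S}{colim}\textrm{ }\mathscr V(U) \longrightarrow \mathscr V(S)$ is both surjective and injective. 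But $|S| > N$ means $H_0(\mathscr V)(S) = 0$ and $H_1(\mathscr V)(S) = 0$, so by the explicit formulas in Proposition \ref{P5.2} this map has zero cokernel and zero kernel, hence is an isomorphism.

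I do not foresee a serious obstacle: the essential ingredients (shift finitely generated $H_a$'s vanishing beyond a bound for finitely generated subobjects, and the $H_0/H_1$-identification of cokernel/kernel of the colimit map) are already in place in Propositions \ref{P5.2} and \ref{P5.8}, and the inductive argument is the same as for Theorem \ref{T5.5}. The only subtle point worth double-checking is that Proposition \ref{P5.8} may legitimately be applied with $\mathscr W = H_a(\mathscr V)$ itself (for $a = 0, 1$), which is permitted precisely because of the standing hypothesis that $H_0(\mathscr V)$ and $H_1(\mathscr V)$ are finitely generated.
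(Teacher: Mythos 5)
Your proposal is correct and follows essentially the same route as the paper: apply Proposition \ref{P5.8} with $\mathscr W = H_0(\mathscr V)$ and $\mathscr W = H_1(\mathscr V)$ (permissible precisely because of the finite generation hypothesis) to obtain the vanishing bound $N$, then repeat the induction on $|S|$ from the proof of Theorem \ref{T5.5} verbatim, using Proposition \ref{P5.2} to identify the kernel and cokernel of the colimit map with $H_1(\mathscr V)(S)$ and $H_0(\mathscr V)(S)$.
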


\begin{proof} Since $H_0(\mathscr V)$ and $H_1(\mathscr  V)$ are finitely generated, it follows from Theorem \ref{P5.8} that   there exists
$N\geq 0$ such that  $H_0(\mathscr V)_n=H_1(\mathscr V)_n=0$ for all $n\geq N$. The rest of the proof now follows in a manner similar to that of Theorem \ref{T5.5} :  it is clear that
\eqref{5.17xet} holds for all $S$ such that $|S|\leq N$.  We consider a set $S$ with  $|S|>N$ and suppose that \eqref{5.17xet} holds for all finite sets $U$ of cardinality $<|S|$. We observe that
\begin{equation}\label{5.18xet}
\underset{\tiny \begin{array}{c} T\subseteq S \\ |T|\leq N\\ \end{array}}{colim}\textrm{ }\mathscr V(T)=\underset{U\subsetneq S}{colim}\textrm{ }\underset{\tiny \begin{array}{c} T\subseteq U \\ |T|\leq N\\ \end{array}}{colim}\textrm{ }\mathscr V(T)
\end{equation} Since each $U$ appearing in \eqref{5.18xet} has cardinality $<|S|$, we have
\begin{equation}
\underset{\tiny \begin{array}{c} T\subseteq S \\ |T|\leq N\\ \end{array}}{colim}\textrm{ }\mathscr V(T)=\underset{U\subsetneq S}{colim}\textrm{ }\mathscr V(U)
\end{equation} Finally since $|S|>N$, we know that $H_0(\mathscr V)(S)=H_1(\mathscr V)(S)=0$. The result is now clear from the expressions in Proposition \ref{P5.2}. 

\end{proof}

\small


\begin{thebibliography}{99}

\bibitem{AR} J.~Ad\'{a}mek, J.~Rosick\'{y}, 
Locally presentable and accessible categories. (English summary) 
{\it London Mathematical Society Lecture Note Series}, {\bf 189}. Cambridge University Press, Cambridge, 1994. 

\bibitem{ABCCM} A.~Banerjee, 
On Auslander's formula and cohereditary torsion pairs, 
{\it Commun. Contemp. Math.} {\bf 20} (2018), no. 6, 1750071, 27 pp.

\bibitem{AB1} A.~Banerjee, Classifying subcategories and the spectrum of a locally noetherian category, arXiv:1710.08068 [math.CT].



\bibitem{BeRe} A.~Beligiannis, I.~Reiten, 
Homological and homotopical aspects of torsion theories,
{\it Mem. Amer. Math. Soc.} {\bf 188} (2007), no. 883. 

\bibitem{Bor} F.~Borceux, Handbook of categorical algebra. 2. Categories and structures,  Encyclopedia of Mathematics and its
Applications, {\bf 51.} Cambridge University Press, Cambridge, 1994.

\bibitem{CEFx} T.~Church, J.~S.~Ellenberg and B.~Farb, FI-modules: a new approach to stability for
Sn-representations, arXiv:1204.4533v2, revised June 2012.

\bibitem{CF} T.~Church, B.~Farb, 
Representation theory and homological stability, 
{\it Adv. Math.} {\bf 245} (2013), 250--314.

\bibitem{Four1} T.~Church,  J.~S.~Ellenberg, B.~Farb, R.~Nagpal,  FI-modules over Noetherian rings, {\it  Geom. Topol,} {\bf  18} (2014), no. 5, 2951--2984. arXiv:1210.1854v2.

\bibitem{CEF-1} T.~Church, J.~S.~Ellenberg, B.~Farb, Representation stability in cohomology and asymptotics for families of varieties over finite fields. Algebraic topology: applications and new directions, 1--54, {\it Contemp. Math.}, {\bf 620}, Amer. Math. Soc., Providence, RI, 2014.

\bibitem{CEF-2} T.~Church, J.~S.~Ellenberg, B.~Farb, $FI$-modules and stability for representations of symmetric groups,
{\it  Duke Math. J.} {\bf 164 } (2015), no. 9, 1833--1910.

\bibitem{CEx} T.~Church, J.~S.~Ellenberg,  Homology of FI-modules, {\it Geom. Topol.} {\bf 21} (2017), no. 4, 2373--2418.

\bibitem{CMNR} T.~Church, J.~Miller, R.~Nagpal, J.~Reinhold,  
Linear and quadratic ranges in representation stability,
{\it Adv. Math.} {\bf 333} (2018), 1--40.

\bibitem{Djament}  A. Djament, La conjecture artinienne, d'apr\`{e}s Steven Sam, preprint, May 2014.

\bibitem{Faith} C.~Faith, Algebra: rings, modules and categories. I. {\it Die Grundlehren der mathematischen Wissenschaften}, Band {\bf 190}, Springer-Verlag, New York-Heidelberg, 1973.

\bibitem{Gar} G.~Garkusha, Grothendieck Categories, arXiv:math/9909030 [math.CT]. 



\bibitem{Tohoku} A.~Grothendieck,  Sur quelques points d'alg\`{e}bre homologique,  {\it T\^{o}hoku Math. J. (2)} {\bf 9} (1957) 119--221.

\bibitem{Hvedri} H.~Inassaridze,  Algebraic K-theory,  {\it Mathematics and its Applications,} {\bf 311},  Kluwer Academic Publishers Group, Dordrecht, 1995.

\bibitem{KaSc} M.~Kashiwara, P.~Schapira,  
Categories and sheaves. 
{\it Grundlehren der Mathematischen Wissenschaften}, {\bf 332},  Springer-Verlag, Berlin, 2006.

\bibitem{Krause1} H.~Krause, 
The artinian conjecture (following Djament, Putman, Sam, and Snowden), {\it Proceedings of the 47th Symposium on Ring Theory and Representation Theory}, 104--111, Symp. Ring Theory Represent. Theory Organ. Comm., Okayama, 2015. 

\bibitem{LE} L.~Li, E.~Ramos, Eric Depth and the local cohomology of $\mathcal{FI}_G$-modules,  {\it Adv. Math.} {\bf 329} (2018), 704--741. 

\bibitem{LGO} W.~Li, J.~Guan, B.~Ouyang,  $FI_G$-modules over coherent rings, {\it  J. Algebra} {\bf 474} (2017), 116--125.

\bibitem{Mi} B.~Mitchell, Rings with several objects, {\it Advances in Math.} {\bf  8} (1972), 1--161.

\bibitem{RSS} R.~Nagpal, S.~V.~Sam, A.~Snowden,  Regularity of $FI$-modules and local cohomology, {\it  Proc. Amer. Math. Soc.} {\bf 146} (2018), no. 10, 4117--4126.



\bibitem{Pop} N.~Popescu,  Abelian categories with applications to rings and modules,  {\it London Mathematical Society Monographs}, No. 3. Academic Press, London-New York, 1973.

\bibitem{Put} A.~Putman,  Stability in the homology of congruence subgroups, {\it Invent. Math.} {\bf 202} (2015), no. 3, 987--1027. 

\bibitem{PS} A.~Putman, S.~V.~Sam,   Representation stability and finite linear groups,  {\it Duke Math. J.} {\bf 166} (2017), no. 13, 2521--2598.

\bibitem{Ramos} E.~Ramos,  
Homological invariants of $FI$-modules and $FI_G$-modules, 
{\it J. Algebra} {\bf 502 } (2018), 163--195.

\bibitem{SS0} S.~V.~Sam, A.~Snowden, $GL$-equivariant modules over polynomial rings in infinitely many variables, {\it Trans. Amer. Math. Soc.} {\bf 368} (2016), no. 2, 1097--1158.

\bibitem{SS} S.~V.~Sam, A.~Snowden,  Gr$\ddot{\mbox{o}}$bner methods for representations of combinatorial categories, {\it J. Amer. Math. Soc.} {\bf 30} (2017), no. 1, 159--203.

\bibitem{Bo} B.~Stenstr$\ddot{\mbox{o}}$m, Rings of Quotients,{\it  Die Grundlehren der Mathematischen Wissenschaften}, vol. 217, SpringerVerlag, New York, 1975.

\end{thebibliography}
\end{document}